\DeclareSymbolFont{AMSb}{U}{msb}{m}{n}
\DeclareMathSymbol{\Z}{\mathbin}{AMSb}{"5A}
\DeclareMathSymbol{\R}{\mathbin}{AMSb}{"52}
\DeclareMathSymbol{\N}{\mathbin}{AMSb}{"4E}
\DeclareMathSymbol{\Q}{\mathbin}{AMSb}{"51}
\newcommand{\tp}{\textup{tp}}
\newcommand{\qftp}{\textup{qftp}}
\newcommand{\Th}{\textup{Th}}
\newcommand{\C}{\mathfrak{C}}
\newcommand{\floor}[1]{\lfloor #1 \rfloor}
\newcommand{\mc}[1]{\mathcal{#1}}
\newcommand{\mf}[1]{\mathfrak{#1}}
\newcommand{\ob}[1]{\overline{#1}}
\newcommand{\abs}[1]{\left \vert #1 \right \vert}
\def\Ind{\setbox0=\hbox{$x$}\kern\wd0\hbox to 0pt{\hss$\mid$\hss}
\lower.9\ht0\hbox to 0pt{\hss$\smile$\hss}\kern\wd0}
\def\Notind{\setbox0=\hbox{$x$}\kern\wd0\hbox to 0pt{\mathchardef
\nn=12854\hss$\nn$\kern1.4\wd0\hss}\hbox to
0pt{\hss$\mid$\hss}\lower.9\ht0 \hbox to
0pt{\hss$\smile$\hss}\kern\wd0}
\newtheorem{thm}{Theorem}[section]
\newtheorem{lem}[thm]{Lemma}
\newtheorem{cor}[thm]{Corollary}
\newtheorem{prop}[thm]{Proposition}
\newtheorem{fact}[thm]{Fact}
\newtheorem{quest}[thm]{Question}
\theoremstyle{definition}
\newtheorem{definition}[thm]{Definition}
\theoremstyle{remark}
\newtheorem{remark}[thm]{Remark}
\theoremstyle{remark}
\newtheorem{example}[thm]{Example}
\theoremstyle{remark}
\newtheorem{claim}[thm]{Claim}
\theoremstyle{remark}
\theoremstyle{remark}
\begin{document}
\bibliographystyle{amsra}

\title{Strong theories of ordered Abelian groups}

\author{Alfred Dolich\footnote{This work was partially supported by a grant from the Simon Foundation (\#240550 to Alfred Dolich)} \and John Goodrick}

\newcommand{\Addresses}{{
  \bigskip
  \footnotesize

  A.~Dolich, \textsc{Department of Mathematics and Computer Science Kingsborough Community College Brooklyn, NY \\  and \\ Department of Mathematics The CUNY Graduate Center NY,NY}\par\nopagebreak
  \textit{E-mail address}, A.~Dolich: \texttt{alfredo.dolich@kbcc.cuny.edu}

  \medskip

  J.~Goodrick, \textsc{Departamento de Matem\'aticas, Universidad de los Andes, Bogot\'a, Colombia}\par\nopagebreak
  \textit{E-mail}: \texttt{jr.goodrick427@uniandes.edu.co}

}}

\maketitle

\abstract{We consider strong expansions of the theory of ordered Abelian groups. We show that the assumption of strength has a multitude of desirable consequences for the structure of definable sets in such theories, in particular as relates to definable infinite discrete sets.  We also provide a range of examples of strong expansions of ordered Abelian groups which demonstrate the great variety of such theories.} 

\section{Introduction}

In this article we establish some tameness properties for discrete sets which are first-order definable in ordered Abelian groups, in languages extending $L_{oag} = \{0, +, <\}$, whose complete theories are strong. Below we will recall the definitions of ``strong,'' ``strongly dependent,'' and the other important properties of theories that we will consider and explain the motivation for their study.

Around 2005, Shelah proposed several possible definitions of what it should mean for a complete first-order theory to be ``strongly dependent,'' with corresponding ordinal-valued ranks, in an attempt to generalize the useful superstable / strictly stable dichotomy to the class of dependent (or NIP) theories, see \cite{strong_dep}. However, it should be emphasized that ``strongly dependent'' really generalizes the idea that types have bounded weight rather than superstability, and there are stable, non-superstable theories which are strongly dependent. Then Adler, in \cite{adler_strong_dep}, gave the formulations of \emph{strongly dependent} and \emph{inp-pattern} which are now standard (and which we follow), as well as introducing the more general class of strong theories (generalizing ``strongly dependent'').

The logical relations among some of the properties of theories relevant to this paper are summarized below, where ``$X \Rightarrow Y$'' signifies that any theory which is $X$ is also $Y$, and all implications are non-reversible.

\bigskip

$\begin{array}{ccccccc}
 \textup{inp-minimal} & \Rightarrow & \textup{finite burden} & \Rightarrow & \textup{strong} & \Rightarrow & \textup{NTP}_2 \\
\Uparrow & & \Uparrow &  & \Uparrow & & \Uparrow \\
 \textup{dp-minimal} & \Rightarrow & \textup{finite dp-rank} & \Rightarrow & \textup{strongly NIP} & \Rightarrow & \textup{NIP} \\

\end{array}$ \\

There is a significant body of previous and concurrent work on algebraic structures (groups, rings, fields, valued fields, \emph{et cetera}) which have theories satisfying the various properties mentioned above. For example, a motivating conjecture is that any NIP field is either algebraically closed, real closed, or admits a definable Henselian valuation; this is still open even for the class of stable fields, although something very close to this was proved very recently by Will Johnson in \cite{Johnson}. Similar results were independently obtained by Jahnke, Simon, and Walsberg in \cite{JSW}, who also characterized dp-minimal ordered Abelian groups and dp-minimal ordered fields in the pure languages of ordered groups and rings.

Other related recent work includes the study of dp- and inp-minimal ordered structures in \cite{Goodrick_dpmin} and \cite{S} and the existence of Abelian, solvable and nilpotent definable envelopes of groups definable in $\textup{NTP}_2$ theories by Hempel and Onshuus \cite{Hempel_Onshuus}. The new work by Simon and Walsberg on dp-minimal structures with tame topology in \cite{Simon_Walsberg} also explores similar ideas, and it would be interesting to see how some of their results might be generalizable to the strong or finite dp-rank context.

We obtain the following general results:

\begin{enumerate}
\item Fields with finite burden satisfy uniform finiteness (Corollary~\ref{UF});
\item In an ordered Abelian group $\langle R; +, <, \ldots\rangle$ with a strong theory, no definable discrete $X \subseteq R$ has accumulation points (Corollary~\ref{nowheredense} (1));
\item In an ordered field $\langle R; <, +, \cdots \rangle$ with a strong theory, there are no infinite definable discrete sets $X \subseteq R$ (Corollary~\ref{nowheredense} (2));
\item In a definably complete ordered field $\langle R; <, +, \cdots\rangle$ with a strong theory, any nowhere dense definable $X \subseteq R$ is finite (Corollary~\ref{nowheredense}, (2) and (3));
\item In a densely-ordered, strong, definably complete Abelian group $\langle R; <, +, \cdots \rangle$, the image of a discrete set under a definable function is always nowhere dense (Corollary~\ref{images_discrete});
\item In a strong Archimedean ordered Abelian group $\langle R; <, +, \ldots \rangle$, any discrete definable subset $P \subseteq R$ must be a finite union of arithmetic sequences (Theorem~\ref{discrete_R}).
\end{enumerate}

We also give (in Section~3 below) a series of new examples illustrating the richness of the class of ordered Abelian groups with strong theories. Here are some of the main motivating examples:


\begin{enumerate}
\item Any $o$-minimal group (which is necessarily Abelian and divisible) is dp-minimal, hence strongly dependent, as is any weakly $o$-minimal group (see \cite{DGL}).
\item An expansion of an $o$-minimal group by a generic predicate is inp-minimal, hence strong, but the theory has the independence property (see Proposition~\ref{genexpansion} below). Thus dense, codense definable sets are possible in the context of this paper.
\item The complete theory of the ordered additive group $\langle \R; +, < \rangle$ expanded with predicates for both $\Z$ and $\Q$ has finite dp-rank (see Proposition~\ref{zq} below), hence is strongly dependent, so proper discrete subgroups may be definable.
\item There are finite dp-rank (hence strongly dependent) expansions of $\langle \R; +, < \rangle$ with definable functions $f: \R \rightarrow \R$ whose graphs are dense in $\R^2$, see Proposition~\ref{dense_graphs} below.
\item Additive reducts of tame pairs of o-minimal fields have finite dp-rank -- see Proposition~\ref{tame_pair} below. This provides examples in which there are definable discrete subsets $P$ of the universe but no element of $P$ has an immediate successor or predecessor.
\item The complete theory of $\langle \Z; +, < \rangle$ (that is, Presburger arithmetic) is dp-minimal. However, if we add a unary predicate $P$ for the set $\{2^k : k \in \N\}$, then the complete theory of the structure is no longer strong. In fact, our results below imply that the only case in which an expansion of this structure by a unary predicate $P$ is strong is when $P$ is already Presburger definable.
\end{enumerate}

The authors would like to thank the organizers of the 2015 meeting on Neostability Theory in Banff International Research Station - Oaxaca for a very stimulating event at which we had many productive discussions on the topics of this paper, and Anand Pillay for the observation in the proof of Corollary~\ref{strong_exp}.

\subsection{Basic definitions and notation}

``Definable'' will include sets that are definable over parameters, unless otherwise specified. We always work with a complete first-order, $1$-sorted theory $T$ and a large ``monster'' model $\C \models T$ from which we pick all of our parameters, where $\C$ is assumed to be $\kappa$-saturated for some $\kappa$ larger than $|T|^+$ and any set of parameters that we happen to be working over.

The following definitions (due to Adler \cite{adler_strong_dep}) are fundamental to all that follows.

\begin{definition}
\label{inp}
For any cardinal $\kappa$, an \emph{inp-pattern of depth $\kappa$} (``independent partition'') for a partial type $p(\overline{x})$ is a sequence of pairs $\langle (\varphi_\alpha ( \overline{x} ; \overline{y}_\alpha), k_\alpha) : \alpha < \kappa \rangle$ where $\varphi_\alpha$ is a formula, $k_\alpha$ is a positive integer, and there is an array of tuples $\langle \overline{b}^{\alpha}_i : \alpha < \kappa, i < \omega \rangle$ witnessing it: that is, for each $\alpha < \kappa$, the row $\{\varphi_\alpha(\overline{x}; \overline{b}^\alpha_i) : i < \omega \}$ is $k_\alpha$-inconsistent, while for every function $\eta : \kappa \rightarrow \omega$, the path $\{\varphi_\alpha(\overline{x}; \overline{b}^\alpha_{\eta(\alpha)}) : \alpha < \kappa\}$ is consistent with $p(\overline{x})$.
\end{definition}

\begin{definition}
\label{ict}
For any cardinal $\kappa$, an \emph{ict-pattern of depth $\kappa$} (``independent contradictory types'') for a partial type $p(\overline{x})$ is a sequence of formulas $\langle \varphi_\alpha ( \overline{x} ; \overline{y}_\alpha) : \alpha < \kappa \rangle$ such that there is an array of tuples $\langle \overline{b}^{\alpha}_i : \alpha < \kappa, i < \omega \rangle$ witnessing it: that is, for every function $\eta : \kappa \rightarrow \omega$, the partial type $$\{\varphi_\alpha(\overline{x}; \overline{b}^\alpha_{\eta(\alpha)}) : \alpha < \kappa\} \cup \{\neg \varphi_\alpha(\overline{x} ; \overline{b}^\alpha_i) : i \neq \eta(\alpha) \}$$ is consistent with $p(\overline{x})$.
\end{definition}

The maximal cardinal $\kappa$ (if it exists) such that there is an inp-pattern of depth $\kappa$ for $p(\overline{x})$ can be thought of as an analogue of the concept of ``weight'' in stable theories (in \cite{adler_strong_dep} it is called the \emph{burden} of $p$). The arrays of parameters witnessing inp- and ict-patterns can always be chosen so that each row is indiscernible over the union of the remaining rows. See \cite{adler_strong_dep} for this and other basic facts.

\begin{definition}
\begin{enumerate}
\item The \emph{burden} (or \emph{inp-rank}) of a theory $T$ is the minimal cardinal $\kappa$ (if it exists) such that there is no inp-pattern in a single free variable $x$ of depth $\kappa^+$. A theory has \emph{finite burden} if it has burden $\leq n$ for some $n \in \N$, and it is \emph{inp-minimal} if its burden is $1$. We say that a structure has finite burden (is inp-minimal, \emph{et cetera}) if its complete theory does.
\item The \emph{dp-rank} of a theory $T$ is the minimal cardinal $\kappa$ (if it exists) such that there is no ict-pattern in a single free variable $x$ of depth $\kappa^+$. A theory has \emph{finite dp-rank} if it has dp-rank $\leq n$ for some $n \in \N$, and it is \emph{dp-minimal} if its dp-rank is $1$.
\item A theory is \emph{strongly dependent} if every ict-pattern in finitely many variables is finite.
\item A theory is \emph{strong} if every inp-pattern in finitely many variables is finite.
\end{enumerate}
\end{definition}

Note that ``strongly dependent'' is equivalent to ``strong and NIP,'' and every strong theory is $\textup{NTP}_2$. See \cite{additivity_dp_rank} for more basic results, such as the sub-additivity of dp-rank.  Also note that our definition of inp-rank and dp-rank differ slightly from those in \cite{adler_strong_dep}.  This difference only manifests itself in the situations where the inp or dp rank is infinite and as in this paper we are concerned with the finite rank cases the difference is immaterial.


\section{General results}

\subsection{Finite inp-rank fields}

We begin with some basic results on fields of finite inp-rank, in particular in this subsection we do not assume that the structures in which we are working are ordered.  Let  $\mathfrak{F} =  \langle F; +, \cdot, \ldots \rangle$ be a field $F$ whose theory has finite inp-rank. Notice that we allow for the situation where $\mathfrak{F}$ is a structure in a language richer than the usual language of rings.

\begin{lem}
\label{large_def_sets}
If $X \subseteq F$ is definable and infinite and $\langle X \rangle$ is the subfield of $F$ generated by $X$, then $F$ is a finite algebraic extension of $\langle X \rangle$.
\end{lem}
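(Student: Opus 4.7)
The plan is to argue by contradiction: if $F$ were not a finite algebraic extension of $K := \langle X \rangle$, then $[F:K]$ (as a $K$-vector space) would be infinite, and we will exploit this to construct inp-patterns of arbitrarily large finite depth in the single variable $x$, contradicting the hypothesis that $\mathfrak{F}$ has finite inp-rank.

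Fix any $n \in \N$. Pick $a_0, \ldots, a_{n-1} \in F$ that are $K$-linearly independent (possible since $[F:K] \geq n$), and, using infinitude of $X$, choose distinct elements $c_0, c_1, c_2, \ldots \in X$. Let $\psi(u)$ be a formula defining $X$, and for each $\alpha < n$ consider
\[
\varphi_\alpha(x; y) := \exists x_0 \ldots \exists x_{n-1} \left( \bigwedge_{\beta < n} \psi(x_\beta) \wedge x = \sum_{\beta < n} a_\beta x_\beta \wedge x_\alpha = y \right),
\]
together with the parameters $b^\alpha_i := c_i$. The claim is that this forms an inp-pattern of depth $n$ with $k_\alpha = 2$. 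Row $\alpha$ is $2$-inconsistent because $X \subseteq K$ and the $a_\beta$ are $K$-linearly independent, so the decomposition $x = \sum_\beta a_\beta x_\beta$ with $x_\beta \in X$ is unique; hence the ``$\alpha$-th coordinate'' is a single-valued function of $x$ and cannot equal both $c_i$ and $c_j$ for $i \neq j$. For any path $\eta : n \to \omega$, the element $x := \sum_{\beta < n} a_\beta c_{\eta(\beta)}$ witnesses consistency: for each $\alpha < n$, $\varphi_\alpha(x; c_{\eta(\alpha)})$ is satisfied by taking $x_\beta := c_{\eta(\beta)}$.

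Since $n$ was arbitrary, one obtains inp-patterns of every finite depth in one variable, contradicting the finite inp-rank hypothesis. The conceptual heart of the argument---and the main obstacle to finding a proof in the first place---is the design of $\varphi_\alpha$: one needs a formula whose rows are genuinely $k$-inconsistent while all paths remain consistent. Simpler attempts using translates of $X$ alone tend to fail precisely when $X$ is already closed under addition (for instance when $X$ happens to equal a subfield), since then translates by elements of $X$ all coincide. Introducing the $K$-linearly independent multipliers $a_\beta$ exploits the field structure to make the ``$\alpha$-th coordinate'' map well defined on $\sum_\beta a_\beta X$, and this well-definedness is exactly what drives the whole construction.
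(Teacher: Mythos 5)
Your argument is essentially the paper's own proof: the paper also picks $n$ elements $a_1,\ldots,a_n$ of $F$ linearly independent over $\langle X\rangle$, observes that $g(x_1,\ldots,x_n)=\sum_i x_i a_i$ is injective on $X^n$, and concludes that the burden of $x=x$ is at least $n$. You have merely made explicit the inp-pattern (coordinate formulas with $k_\alpha=2$, rows inconsistent by uniqueness of the decomposition, paths realized by $\sum_\beta a_\beta c_{\eta(\beta)}$) that the paper leaves implicit in that last step, and this is correct.
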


\begin{proof}
Suppose not. Then for any $n < \omega$, we can find a collection of elements $\{a_1, \ldots, a_n\}$ from $F$ which are $\langle X \rangle$-linearly independent. Then we can define a function $g: X^n \rightarrow F$ by $g(x_1, \ldots, x_n) = \sum_{i=1}^n x_i a_i$ and by linear independence, $g$ is injective. The existence of such functions $g$ implies that the inp-rank of $F$ is at least $n$ for any $n$, contradiction.
\end{proof}

This leads to the next corollary, which was proved in the special case of dp-minimal fields by Johnson in \cite{Johnson}:

\begin{cor}
\label{UF}
The structure $\mathfrak{F}$ has uniform finiteness (UF): for any formula $\varphi(x;\overline{y})$, there is a number $n < \omega$ such that whenever $|\varphi(F; \overline{b})| > n$, then $\varphi(F; \overline{b})$ is infinite.
\end{cor}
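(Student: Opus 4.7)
The plan is to argue by contradiction, combining compactness with Lemma~\ref{large_def_sets}. First, suppose UF fails for some formula $\varphi(x;\overline{y})$: for each $n<\omega$ there is a parameter $\overline{b}_n$ in the monster $\C \succeq \mathfrak{F}$ with $|\varphi(\C;\overline{b}_n)|$ finite and strictly larger than $n$. Applying compactness to the partial type $\{(\exists^{\geq n}x)\,\varphi(x;\overline{y}) : n<\omega\}$, which is finitely satisfiable by the $\overline{b}_n$'s, yields some $\overline{b}^* \in \C$ with $X := \varphi(\C;\overline{b}^*)$ infinite. Lemma~\ref{large_def_sets} then tells us that $\C$ is a finite algebraic extension of $K := \langle X \rangle$, of some degree $m$.

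The core of the argument is to turn the simultaneous existence of these structures --- the $\overline{b}_n$'s with arbitrarily large but finite fibers together with $\overline{b}^*$ --- into a concrete violation of finite inp-rank. Following the template of the proof of Lemma~\ref{large_def_sets} itself, the natural strategy is: for each $n$, choose elements $a_1,\ldots,a_n \in \C$ that are linearly independent over the finitely generated subfield $K_n := \langle \varphi(\C;\overline{b}_n)\rangle$, and form the definable injection $g_n(x_1,\ldots,x_n) = \sum_{i=1}^n x_i a_i$ from $\varphi(\C;\overline{b}_n)^n$ into $\C$. The $n$ coordinate projections of $g_n$, exactly as in the lemma's proof, would then give the rows of an inp-pattern of depth $n$ in one free variable; doing this for every $n$ contradicts finite inp-rank.

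I expect the main obstacle to be that a single $g_n$ gives rows indexed by the finite set $\varphi(\C;\overline{b}_n)$ of size $k_n$, whereas the definition of an inp-pattern requires $\omega$-indexed rows. To bridge this I would plan to combine the constructions across $n$: using saturation of $\C$, fix in advance a single infinite algebraically independent sequence $(a_i)_{i<\omega}\subseteq\C$, and then, via the freedom provided by the finite satisfiability of the type above, choose the $\overline{b}_n$ so that the fibers $\varphi(\C;\overline{b}_n)$ remain algebraically independent from the chosen $a_i$'s. In such a setup linear independence over $K_n$ is automatic and the required rows can be written down uniformly via $\varphi$ together with the single formula $x = \sum_i x_i a_i$. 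Making this uniformity rigorous --- so that the rows really are indexed by an infinite parameter set rather than by the individual finite fibers --- is where I expect the principal technical work to lie.
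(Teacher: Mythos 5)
Your first paragraph is on the right track but stops short of where the contradiction actually lives: once compactness hands you $\overline{b}^*$ with $X=\varphi(\C;\overline{b}^*)$ infinite, the conclusion of Lemma~\ref{large_def_sets} (that $\C$ is a finite extension of $\langle X\rangle$) is not by itself absurd, so you are forced into the detour of your second and third paragraphs. That detour has a genuine gap, and you have named it yourself: an inp-pattern requires each row to be indexed by $\omega$, while the rows you extract from a finite fiber $\varphi(\C;\overline{b}_n)$ have only $k_n$ instances. Your proposed remedy --- fixing an algebraically independent sequence $(a_i)_{i<\omega}$ in advance and choosing the $\overline{b}_n$ so that the fibers stay independent from it --- does not touch this problem: it only guarantees linear independence of the $a_i$ over each $\langle\varphi(\C;\overline{b}_n)\rangle$ (and even there the quantifier order is backwards; you must choose the $a_i$ \emph{after} the countably many finite fibers, over the small field they generate, not the other way around). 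The rows remain finite, so no inp-pattern is produced. The step you defer as ``the principal technical work'' is exactly the missing content; it can in fact be closed by a second compactness argument at the level of arrays (for each fixed depth $n$, the formulas of the rows are fixed and the $n\times k_N$ approximations with $2$-inconsistent rows and consistent paths yield an $n\times\omega$ array), but nothing in your write-up carries this out.

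The paper avoids the detour entirely by putting the linear independence into the \emph{first} compactness argument. Membership in $\langle X_{\overline{y}}\rangle$ is an $\omega$-disjunction of formulas (one for each shape of rational expression in elements of $X_{\overline{y}}$), so ``$a_1,a_2,\dots$ are linearly independent over $\langle X_{\overline{y}}\rangle$'' is a partial type in $\overline{y}$ and the $a_i$. If UF fails, this type together with $\{\exists^{\geq n}x\,\varphi(x;\overline{y}):n<\omega\}$ is finitely satisfiable, because a finite fiber generates a small field over which $\C$ has infinite linear dimension. A realization gives an infinite definable $X$ with infinitely many elements of $\C$ linearly independent over $\langle X\rangle$, flatly contradicting the conclusion of Lemma~\ref{large_def_sets}; no new inp-pattern needs to be built. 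I would recommend restructuring your argument along these lines, or else honestly completing the array-compactness step you flagged.
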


\begin{proof}
If $X_a := \varphi(F; \overline{b})$ is finite, then clearly there is a collection $\{a_i : i < \omega\}$ of elements of $F$ which are linearly independent over the field generated by $X_a$. If UF failed, then we could use compactness to find an $a$ such that $X_a$ is infinite and a collection of $\{a_i : i < \omega\}$ linearly independent over the field generated by $X_a$, contradicting Lemma~\ref{large_def_sets}.
\end{proof}

We can consider the special case where $\mathfrak{F}$ is an ordered field. 
Recall the following basic definition (for a discussion see \cite{ivp}):

\begin{definition} A structure $ \langle R; <, \dots \rangle$ modeling the theory of dense linear orderings is called {\em definably complete} if for any definable $X \subseteq R$ which is bounded above (below) has a least upper bound (greatest lower bound).
\end{definition}

When $\mathfrak{F}$ is ordered Corollary \ref{UF} has a strong consequence (for o-minimal open core see \cite{opencore2}):

\begin{cor} if $\mathfrak{F}$ is ordered and  definably complete then any model of $\Th(\mathfrak{F})$ has o-minimal open core.
\end{cor}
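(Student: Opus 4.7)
The plan is to combine Corollary~\ref{UF} with a structural theorem from \cite{opencore2}. Both uniform finiteness (once a bound $n_\varphi$ is fixed per formula, UF is expressed by the first-order scheme "$|\varphi(F;\bar y)| \leq n_\varphi \lor |\varphi(F;\bar y)| > k$" for each $k$) and definable completeness pass to elementary extensions, so it suffices to show that $\mathfrak{F}$ itself has o-minimal open core.

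I would then invoke the characterization available in \cite{opencore2} for definably complete expansions of ordered fields, relating o-minimal open core to the absence of infinite definable discrete subsets of $F$. The problem thereby reduces to showing that $\mathfrak F$ admits no infinite definable discrete $D \subseteq F$.

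The key lemma is that in a definably complete ordered field a closed, bounded, discrete definable set is finite: the successor map $s(d) = \min\{x \in D : x > d\}$ is definable on such a set (using definable completeness together with the fact that a discrete point cannot be an accumulation point from above), and the positive gap function $g(d) = s(d) - d$, being continuous in the subspace topology on the definably compact set $D$, attains a positive minimum $\alpha > 0$; the resulting uniform lower bound on gaps, combined with the length $\max D - \min D$, forces finite cardinality. Applying this lemma to the closure of $D \cap [-t,t]$ shows $D \cap [-t,t]$ itself is finite for every $t$. Corollary~\ref{UF}, applied to $\varphi(x;t) \equiv (x \in D \land |x| \leq t)$, then gives a uniform bound $N$ on these sizes as $t$ varies, forcing $|D| \leq N$ and hence $D$ finite.

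The main obstacle I anticipate is the reduction step cited from \cite{opencore2}: passing from "no infinite definable discrete subset of $F$" to the global statement "the open core is o-minimal" uses not only definable completeness but also the field structure in a nontrivial way, since one must control the convex components of an arbitrary definable open subset of $F$, not merely the isolated points of a nowhere dense set. With that result as a black box, however, the UF bound obtained from Corollary~\ref{UF} delivers the corollary directly.
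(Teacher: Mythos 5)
The paper does not actually prove this corollary at all: it is a one-line appeal to \cite{opencore2}, whose relevant theorem takes exactly the hypotheses you already have in hand --- definable completeness together with uniform finiteness --- and concludes o-minimal open core for expansions of ordered fields. Since both UF (once the bounds $n_\varphi$ are fixed) and definable completeness are first-order schemes, Corollary~\ref{UF} plus that citation finishes the proof for every model of $\Th(\mathfrak{F})$. Your opening remark about elementarity is therefore exactly the right (and essentially the only) observation needed. The detour you then take --- restating the black box in terms of the nonexistence of infinite definable discrete sets and trying to rule such sets out by hand --- replaces the hypothesis the cited theorem actually wants with a different condition that you must then re-derive, and that re-derivation is where the problems lie.

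Concretely, there are three gaps in the discrete-set argument. First, you apply your key lemma to the closure of $D \cap [-t,t]$, but the closure of a discrete set need not be discrete (consider $\{1/n : n \geq 1\}$), so the lemma does not apply; the case in which $D$ has an accumulation point is precisely the hard case, and UF by itself does not dispose of it --- in this paper that situation is handled by Theorem~\ref{small_discrete}, which uses strength, not merely uniform finiteness. Second, even granting a positive lower bound $\alpha$ on the gaps of a bounded definable set, finiteness does not follow from ``length divided by $\alpha$'' in a non-Archimedean model: one needs a definable-compactness (finite subcover) argument, itself a nontrivial consequence of definable completeness. Third, the assertion that the gap function attains its infimum because it is continuous on a definably compact set is not free either; the extreme value theorem in definably complete structures is established for closed bounded intervals, not for arbitrary closed bounded definable sets. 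All of this effort is avoided by citing \cite{opencore2} in the form the authors intend, with UF and definable completeness as the hypotheses.
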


\begin{quest}
Does UF hold for strongly dependent fields?
\end{quest}

\begin{quest}
Is there a finite dp-rank ordered field with a definable dense and codense subset?  In other words is there a theory $T$ all of whose models have o-minimal open core and such that $T$ has finite dp-rank? 
\end{quest}

By way of contrast, note that an expansion of a real-closed field by a generic unary predicate (as discussed in Section~3.5 below) is an inp-minimal ordered field with a dense codense predicate.

\subsection{Ordered Abelian groups}

\textbf{Here and below, $\mathcal{R} =  \langle R;  <, +, \ldots \rangle$ always denotes an ordered Abelian group.} We consider $R$ as a topological space with the order topology and $R^n$ as a topological space with the corresponding product topology. Note that some of the topological results below will be trivial in the case of a discretely-ordered group such as $\mathcal{R} = \langle \Z; <, + \rangle$, but everything holds for any strong ordered Abelian group, unless we specifically say that it is dense.

For $x \in R$, we let $|x| = \max(x, -x)$ and define the ``distance'' $d(x,y)$ between two elements $x$ and $y$ as $|x - y|$. Note that this gives an $R$-valued metric on $R$ satisfying the usual triangle inequality.

\begin{prop}
\label{indep}
Suppose that $S \subseteq R$ is definable and infinite, $n, k \in \N$, and for every $g \in R$ there are at most $k$ distinct ways to express $g$ as a sum $s_1 + \ldots + s_n$ where each $s_i \in S$. Then the burden of $x=x$ is at least $n$.
\end{prop}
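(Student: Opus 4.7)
The plan is to construct explicitly an inp-pattern of depth $n$ in the single free variable $x$, which by definition gives burden $\geq n$. For each $\alpha \in \{1, \ldots, n\}$, the $\alpha$-th row formula should assert that $x$ admits some representation as a sum of $n$ elements of $S$ in which the $\alpha$-th summand is a prescribed value $y$; concretely, I would take
$$\varphi_\alpha(x; y) \;:=\; \exists s_1 \cdots \exists s_n \left( \bigwedge_{i=1}^n s_i \in S \;\wedge\; x = s_1 + \cdots + s_n \;\wedge\; s_\alpha = y \right).$$
To supply the witnessing array, use that $S$ is infinite in the saturated model $\C$ to choose, for each row $\alpha$, an $\omega$-sequence $(b^\alpha_i)_{i < \omega}$ of pairwise distinct elements of $S$. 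These choices may be made independently across rows; no coherence between rows is needed.

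Path consistency is then almost automatic. Given any $\eta : \{1, \ldots, n\} \to \omega$, put $x^* := b^1_{\eta(1)} + \cdots + b^n_{\eta(n)}$; taking $s_j := b^j_{\eta(j)}$ (which lies in $S$) simultaneously witnesses $\varphi_\alpha(x^*; b^\alpha_{\eta(\alpha)})$ for every $\alpha$, so $x^*$ realizes the entire path.

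The whole content of the argument is row inconsistency, and this is exactly where the fibre-bound hypothesis is used. Fix a row $\alpha$ and a potential realization $x$; each instance $\varphi_\alpha(x; b^\alpha_i)$ that holds produces a representation of $x$ as a sum of $n$ elements of $S$ whose $\alpha$-th coordinate is $b^\alpha_i$. Since the $b^\alpha_i$ are pairwise distinct, the resulting $n$-tuples are pairwise distinct as well (they already differ in position $\alpha$), so by hypothesis at most $k$ of the formulas $\varphi_\alpha(x; b^\alpha_i)$ can be simultaneously satisfied by any single $x$. Hence each row is $(k+1)$-inconsistent, and the array witnesses an inp-pattern of depth $n$ for $x = x$.

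I do not anticipate any substantive obstacle: the inconsistency step is a direct pigeonhole reading of the hypothesis, and the choice of formulas is the naive one that encodes ``$S$-sum-of-length-$n$ with the $\alpha$-th coordinate fixed.'' The only mild care needed is to ensure the parameters in each row are genuinely distinct elements of $S$, which is free from infinitude of $S$ and saturation of $\C$.
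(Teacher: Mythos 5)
Your proof is correct and follows essentially the same route as the paper's: the paper uses the single formula ``$x-a$ is a sum of $(n-1)$ elements of $S$'' with an $n \times \omega$ array of pairwise distinct elements of $S$, which (by commutativity of $+$) is the same as your position-tagged $\varphi_\alpha$, and it verifies row inconsistency via the fibre bound and path consistency via the explicit sum exactly as you do. Your write-up simply spells out the details that the paper leaves implicit.
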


\begin{proof}
Let $\{a_{i,j} : i \in \omega, 1 \leq j \leq n\}$ be any collection of pairwise distinct elements from $S$, and let $\varphi(x, a)$ be a formula expressing ``$x-a$ is a sum of $(n-1)$ elements from $S$.'' Then the array $\{\varphi(x; a_{i,j}) : i \in \omega, 1 \leq j \leq n\}$ witnesses that $x=x$ has burden at least $n$.
\end{proof}

\begin{example}
Any ordered Abelian group with a $\Q$-independent definable set $S \subseteq R$ does not have finite burden.
\end{example}

\begin{example}
Consider the compete theory of $\langle \R; <, +, P \rangle$ where $P = \{2^k : k \in \N\}$. Then $P$ is not $\Q$-independent, but it satisfies the hypotheses of $S$ in Proposition~\ref{indep} with $k=1$ (by the uniqueness of representations with binary digits). Thus the theory does not have finite burden.
\end{example}

Our next Lemma is a fundamental technical statement we will use to derive several subsequent results.

\begin{lem}
\label{discrete_family}
Suppose that there is a family of infinite definable discrete sets $D_i$ and $\varepsilon_i>0$ for $i \in \N$ such that:

\begin{enumerate}

\item $3 \cdot D_{i+1} \subseteq (0, \varepsilon_i)$, where $3 \cdot D_{i+1}$ refers to the set $\{3 x : x \in D_{i+1}\}$; and
\item If $x \in D_i$ then $(x-\varepsilon_i,x+\varepsilon_i) \cap D_i = \{x\}$.

\end{enumerate}

Then $\mathcal{R}$ is not strong.

\end{lem}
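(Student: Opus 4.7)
The plan is to exhibit an inp-pattern of depth $\omega$ in the single variable $x$, which would contradict strength. The idea is that the nested sets $(D_i)$ allow one to uniquely decode an element of the form $x = a^1 + \cdots + a^n$ (with $a^k \in D_k$) back into its coordinates, and this decoding can be performed by a fixed sequence of first-order formulas $\varphi_i$ that does not depend on $n$.

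First I would extract a geometric decay of the $\varepsilon_i$: for any two distinct $d_1, d_2 \in D_{i+1}$ (which exist because $D_{i+1}$ is infinite), condition~(2) gives $|d_1 - d_2| \geq \varepsilon_{i+1}$, while condition~(1) gives $3 d_j < \varepsilon_i$ for $j = 1, 2$, so $3 \varepsilon_{i+1} \leq 3|d_1 - d_2| < \varepsilon_i$. A downward induction on $i$ (for fixed $n$) then shows that for any $i < n$ and any choice of $a_k \in D_k$ with $i < k \leq n$, the tail $T := a_{i+1} + \cdots + a_n$ satisfies $3 T \leq 3 a_{i+1} + 3 T_{i+1} < \varepsilon_i + \varepsilon_i = 2 \varepsilon_i$, hence $T < \varepsilon_i$ (for otherwise $3T \geq 3\varepsilon_i > 2\varepsilon_i$).

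Next, for each $i \geq 1$ define
\[\varphi_i(x;y) \;:=\; y \in D_i \;\wedge\; \exists z_1 \in D_1\,\cdots\,\exists z_{i-1} \in D_{i-1}\Bigl[\bigwedge_{k=1}^{i-1} \bigl(z_k \leq x - \textstyle\sum_{j<k} z_j < z_k + \varepsilon_k\bigr) \,\wedge\, y \leq x - \sum_{j<i} z_j < y + \varepsilon_i\Bigr].\]
By the $\varepsilon_k$-separation of each $D_k$ (condition~(2)), any two candidates $z_k, z_k'$ in $D_k$ for the same current remainder $r_k = x - \sum_{j<k} z_j$ satisfy $|z_k - z_k'| < \varepsilon_k$ and hence coincide. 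Applying this recursively, if $\varphi_i(x;y)$ and $\varphi_i(x;y')$ both hold, then the full tuples of witnesses agree and $y = y'$; this forces every row of the prospective pattern to be $2$-inconsistent.

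Finally, on row $i$ take any countable family of distinct parameters $\{a^i_j : j < \omega\} \subseteq D_i$. For any path $\eta: \omega \to \omega$ and any finite $n$, setting $x_n := a^1_{\eta(1)} + \cdots + a^n_{\eta(n)}$ and using the witnesses $z_k := a^k_{\eta(k)}$ together with the tail bound from the first step, one directly checks that $\varphi_i(x_n;\, a^i_{\eta(i)})$ holds for every $i \leq n$. Thus every finite restriction of the path type is realized, yielding an inp-pattern of depth $\omega$ in the variable $x$, so $\mathcal{R}$ is not strong. The main technical point is the \emph{uniformity} of the family $(\varphi_i)$: a single fixed sequence of formulas must witness the pattern at every depth simultaneously, which is exactly what the bound ``tail $< \varepsilon_i$ independent of the number of summands'' delivers.
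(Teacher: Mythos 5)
Your proof is correct and follows essentially the same strategy as the paper's: encode an $\omega$-sequence of choices, one element from each $D_i$, into a single group element via the sum $a^1+\cdots+a^n$, and use the scale separation $3\varepsilon_{i+1}<\varepsilon_i$ (plus condition (2)) to show the summands are uniquely decodable, yielding a $2$-inconsistent row at each level and consistent paths, i.e.\ an inp-pattern of depth $\aleph_0$. The only differences are in bookkeeping: the paper's row-$j$ parameters are endpoints of disjoint intervals meeting $D_j$ infinitely and its path consistency goes through a nested-interval argument, whereas you use elements of $D_j$ directly as parameters and verify path consistency by exhibiting the explicit finite witnesses $x_n$, which is if anything slightly cleaner.
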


\begin{proof}
Let $Y_0=D_0$ and inductively define sets $Y_n$ for $n>0$ as follows:
\[Y_{n+1}=\{x : x = y+d \text{ so that } y \in Y_n \text{ and } d \in D_{n+1}\}.\]

For each $j \in \N$ pick pairwise disjoint open intervals $I^j_i$ for $i \in \N$ so that $I^j_i \cap D_j$ is infinite for all $i$.
Let $l^j_i$ and $r^j_i$ be the left and right endpoints of the $I^j_i$ respectively.

We claim that if $a,b, \in Y_n$ are distinct then $\abs{a-b}>\varepsilon_n$.  We verify this by induction on $n$.  If $n=0$ this is trivial by construction.  Now suppose that $a,b \in Y_{n+1}$.  By definition $a=y+d$ and $b=z+e$ where $y,z \in Y_n$ and 
$d,e \in D_{n+1}$.  Suppose that $y \not = z$ By induction $3 \abs{y-z}> 3 \varepsilon_n$ and by construction $3 \abs{d}<\varepsilon_n$ and $3 \abs{e}<\varepsilon_n$ so $$3 \cdot \abs{a-b} = \abs{3 (y - z) + (3d - 3e)} > \varepsilon_n >3  \varepsilon_{n+1}.$$  Hence $y=z$ but then 
$\abs{a-b}=\abs{d-e}>\varepsilon_{n+1}$ by construction.

Now let $\varphi_j(x;y,z)$ for $j \geq 1$ be the formula: \[\exists w (w \in Y_{j-1} \wedge w<x \wedge y<x-w<z)\] and consider the formulae $\varphi_j(x;l^j_i,r^j_i)$ for $i \in  \N$.  We claim that $\varphi_j(x;l^j_i,r^j_i)\wedge \varphi_j(x;l^j_{k},r^j_{k})$ is inconsistent if $i \not= k$.  Otherwise suppose that $a$ realizes the conjunction.  We find $y_i, y_k  \in Y_{j-1}$ so that:  $y_i<a$, $a-y_i \in I^j_i$, $y_k<a$, and $a-y_k \in I^j_k$.  First notice that we must have that $y_i=y_k$, else suppose that $y_i<y_k$, but then $a-y_i >\varepsilon_{j-1}$ hence $a-y_i \notin I_i^j$ since 
$3x \in (0, \varepsilon_{j-1})$ for every $x \in I_i^j$. Thus $y_i=y_k$ but then $a-y_i$ must lie in both $I^j_i$ and $I^j_k$, a contradiction. 

Now let $\eta : \N \setminus \{0\} \to \N$.  We claim that the type \[\Gamma(x)=\bigwedge_{j \in \N}\varphi_j(x; \l^j_{\eta(j)},r^j_{\eta(j)})\] is consistent.  Let $y_0 \in Y_0$ and let $J_0$ be the open interval 
$(y_0+l^1_{\eta(1)},y_0+r^1_{\eta(1)})$.   Choose $d \in D_1 \cap I^1_{\eta(1)}$ so that $d$ is neither the smallest or largest element in this intersection (recall that the intersection is assumed to be infinite) and set $y_1=y_0+d$.  
Hence $y_1 \in Y_1 \cap J_0$. Now let $J_1=(y_1+l^2_{\eta(2)},y_1+r^2_{\eta(2)})$.  We claim that $J_1 \subseteq J_0$.  As $d$ was neither the smallest or largest element in $D_1 \cap I^1_{\eta(1)}$  we can find $d_0<d$ and $d_1>d$ with $d_i \in D_1$ so that $y_0+d_0$ and $y_0+d_1$ both lie in $Y_1 \cap J_0$.  Hence the interval $(y_0+d_0, y_0+d_1)$ is contained in $J_0$.  But both $3 l^2_{\eta(2)}$ and $3 r^2_{\eta(2)}$ are less than $\varepsilon_1$ and so $J_1 \subseteq (y_0+d_0,y_0+d_1) \subseteq J_0$. as desired.   Now choose $d \in D_2 \cap I^2_{\eta(2)}$ and set $y_2=y_1+d$.  Continuing in this manner we construct a sequence of open intervals $J_i$ for $i \in \N$ so that $J_{i+1} \subseteq J_i$.  Finally notice that if $a \in \bigcap_{i \in \N}J_i$ then $a$ realizes $\Gamma(x)$.

Thus we have constructed an inp-pattern of depth $\aleph_0$ and so the theory is not strong.

\end{proof}

Our previous Lemma allows us to derive the following theorem which severely constrains the type of discrete sets that are definable in a strong structure $\mc{R}$.

\begin{thm}\label{small_discrete} Suppose that $\mc{R}= \langle R; +, <, \dots \rangle$ is a densely-ordered Abelian group.  Let $\C \models Th(\mc{R})$ be a saturated model and suppose that for every $\varepsilon > 0$ there is an infinite definable discrete set $X$ such that $X \subseteq (0,\varepsilon)$. Then $Th(\mc{R})$ is not strong.
\end{thm}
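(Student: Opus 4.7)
The plan is to apply Lemma~\ref{discrete_family} by recursively constructing a sequence $(D_i, \varepsilon_i)_{i \in \N}$ satisfying its two hypotheses. The central technical step is the following observation: for any infinite definable discrete $X \subseteq R$, there exist $\delta > 0$ and an infinite definable $D \subseteq X$ such that any two distinct elements of $D$ lie at distance at least $\delta$.

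To prove this observation, set $D_\delta = \{x \in X : \forall y \in X\,(y \neq x \to |x-y| \geq \delta)\}$. By discreteness, each $x \in X$ lies in some $D_\delta$, namely for $\delta$ small enough to fit inside its isolating neighborhood, and $D_\delta$ grows as $\delta$ shrinks. Given $N < \omega$, choosing $N$ distinct points of $X$ and taking $\delta$ to be the minimum of their isolation radii yields $|D_\delta| \geq N$; thus the partial type $\{\delta > 0\} \cup \{|D_\delta| \geq n : n < \omega\}$ in the single variable $\delta$ is finitely satisfiable. Saturation of $\C$ realizes it by some $\delta^* > 0$, and $D := D_{\delta^*}$ is by construction an infinite definable subset of $X$ any two of whose distinct points are at distance $\geq \delta^*$.

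Next, a small density fact: in a densely-ordered Abelian group, for every $\varepsilon > 0$ there is $\alpha > 0$ with $3\alpha < \varepsilon$. If not, iterated density furnishes $\alpha_1 > \alpha_2 > \alpha_3 > \alpha_4 > \alpha_5 > 0$ inside $(0, \varepsilon)$; by hypothesis $3(\alpha_i - \alpha_{i+1}) \geq \varepsilon$ for each $i$, and telescoping gives $3\alpha_1 > 4\varepsilon$, contradicting $\alpha_1 < \varepsilon$. With this in hand, the recursion is immediate: at stage $0$, apply the theorem's hypothesis with any $\varepsilon > 0$ and extract $D_0$ and $\varepsilon_0 > 0$ via the claim above; given $\varepsilon_i$, pick $\alpha > 0$ with $3\alpha < \varepsilon_i$, apply the hypothesis at $\alpha$ to obtain an infinite definable discrete $X_{i+1} \subseteq (0, \alpha)$, and again apply the claim to $X_{i+1}$ to get an infinite definable $D_{i+1} \subseteq X_{i+1}$ whose distinct points are separated by some $\varepsilon_{i+1} > 0$. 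Then $3 \cdot D_{i+1} \subseteq (0, 3\alpha) \subseteq (0, \varepsilon_i)$ and $(x - \varepsilon_i, x + \varepsilon_i) \cap D_i = \{x\}$ for every $x \in D_i$, so both hypotheses of Lemma~\ref{discrete_family} hold and $\Th(\mc{R})$ is not strong.

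The main obstacle is the uniform-separation observation: \emph{a priori}, isolation radii along a discrete set can shrink arbitrarily, and no infinite uniformly separated subset need exist inside $\mc{R}$ itself. Saturation of $\C$ neutralizes this at the cost of a possibly non-standard separation parameter $\delta^*$, which is harmless since we only need the resulting inp-pattern to be realized in $\C$.
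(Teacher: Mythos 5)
Your proof is correct and follows essentially the same route as the paper's: both arguments recursively build the family $(D_i,\varepsilon_i)$ for Lemma~\ref{discrete_family} by extracting, from each infinite definable discrete set supplied by the hypothesis, an infinite definable uniformly separated subset. The only difference is that you spell out the saturation/compactness argument producing the separation parameter $\delta^*$ (and the density fact giving $3\alpha<\varepsilon$), steps the paper leaves implicit.
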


\begin{proof}

Given $\mc{R}$ as in the hypothesis, we simply have to construct a family of definable sets $D_i$ as in Lemma~\ref{discrete_family}. We do this by induction on $\N$.  Let $D$ be any definable infinite discrete set and without loss of generality assume that $D \subseteq (0,\infty)$.  Let $\varepsilon>0$ be such that there are infinitely many $x \in D$ so that $(x-\varepsilon,x+\varepsilon) \cap D=\{x\}$.  Let $D_0=\{x \in D : (x-\varepsilon,x+\varepsilon) \cap D = \{x\}\}$.  Let $\varepsilon_0=\varepsilon$.

Suppose we have constructed $D_n$ and $\varepsilon_n$.  Let $\varepsilon' \in R$ be such that $0 < 3 \varepsilon' < \varepsilon_n$ and let $D$ be an infinite definable discrete set so that $D \subseteq (0, \varepsilon')$.  Pick $\varepsilon \in (0, \varepsilon')$ so that there are infinitely many $x \in D$ such that $(x-\varepsilon,x+\varepsilon) \cap D =\{x\}$.  
Let $D_{n+1}=\{x \in D : (x-\varepsilon,x+\varepsilon) \cap D =\{x\}\}$ and $\varepsilon_{n+1}=\varepsilon$.

\end{proof}

Next we will prove some topological tameness properties of unary sets definable in strong densely-ordered Abelian groups.\footnote{Technically speaking these results are true in any strong ordered Abelian group, but when the ordering is discrete they are completely trivial since every point is isolated.}

If $S \subseteq R$, define $S'$ to be the usual Cantor-Bendixson derivative, that is, $$S' = \{s \in R :  \textup{ whenever } a < s < b, \, \left|(a, b) \cap S \right|>1\}.$$ Note that if $S$ is not closed then $S'$ is not a subset of $S$. Define the set of \emph{limit points of $S$ from the left} to be $$S'_L = \{x \in R : \textup{ for every } a < x, \, (a, x) \cap S \neq \emptyset\},$$ and we similarly define $S'_R$, the set of \emph{limit points of $S$ from the right}. A point $x$ is \emph{left-isolated} (or \emph{right-isolated}) if $x \in S \setminus S'_L$ (respectively, $x \in S \setminus S'_R$).

\begin{cor}
\label{limitpts}
Suppose that $\mc{R}=\langle R; +, <, \dots \rangle$ is a densely-ordered Abelian group with a strong theory and $S \subseteq R$ is definable.

\begin{enumerate}
\item No $x \in R$ is a limit point of $S \setminus S'$. 

\item No $x \in R$ is a limit point of $S \setminus S'_R$ from the right.

\item No $x \in R$ is a limit point of $S \setminus S'_L$ from the left.

\end{enumerate}

\end{cor}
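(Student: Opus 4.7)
The plan for all three parts is to reduce to Theorem~\ref{small_discrete}: if the conclusion fails, we exhibit, in the saturated monster $\C \models \Th(\mc{R})$, an infinite definable discrete subset of $(0,\varepsilon)$ for every $\varepsilon > 0$ in $\C$. The failure of each of (1)--(3) is a first-order assertion about $\Th(\mc{R})$ (witnessed by the existence of an offending limit point $x$), so by elementarity such a witness exists in $\C$, and after replacing $S$ by the translate $S - x$ we may assume $x = 0$. Moreover, (3) reduces to (2) by applying the definable involution $y \mapsto -y$: this interchanges left- and right-isolation, sending $S \setminus S'_L$ to $(-S) \setminus (-S)'_R$ and converting a left-limit of the former into a right-limit of the latter.

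For (1), observe that $S \setminus S'$ is definable and already discrete: if $s \in S \setminus S'$ then some open interval $I \ni s$ meets $S$ only at $s$, and hence meets $S \setminus S'$ only at $s$. A discrete set with a limit point $0$ in a densely ordered group must accumulate on at least one side of $0$ (otherwise a small two-sided neighborhood contains no point of the set besides $0$). Possibly post-composing with $y \mapsto -y$, the set $S \setminus S'$ yields an infinite definable discrete subset of $(0,\varepsilon)$ for every $\varepsilon > 0$ in $\C$, contradicting Theorem~\ref{small_discrete}.

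For (2), let $T = S \setminus S'_R$ and assume $0$ is a right-limit of $T$ in $\C$, so $T \cap (0,\varepsilon)$ is infinite for every $\varepsilon > 0$ in $\C$. The complication relative to (1) is that $T$ need not itself be discrete. For each $\delta > 0$ define
\[
T_\delta = \{t \in T : (t, t+\delta) \cap S = \emptyset\},
\]
which is definable and discrete: if $t_1 < t_2$ both lie in $T_\delta$, then $t_2 - t_1 \geq \delta$, otherwise $t_2 \in (t_1, t_1+\delta) \cap S$ contradicts $(t_1, t_1+\delta) \cap S = \emptyset$. By the very definition of $T$, each $t \in T$ has some positive witness $\delta(t)$ with $t \in T_{\delta(t)}$, so for any $t_1, \ldots, t_N \in T \cap (0,\varepsilon)$ the element $\delta := \min_i \delta(t_i) > 0$ satisfies $|T_\delta \cap (0,\varepsilon)| \geq N$. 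The partial type
\[
\{\delta > 0\} \cup \{|T_\delta \cap (0,\varepsilon)| \geq N : N \in \N\}
\]
is therefore finitely satisfiable over the parameters defining $S$ together with $\varepsilon$, and by saturation it is realized by some $\delta^* > 0$ in $\C$. Then $T_{\delta^*} \cap (0,\varepsilon)$ is the required infinite definable discrete subset of $(0,\varepsilon)$, and Theorem~\ref{small_discrete} produces the contradiction.

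The principal obstacle is precisely that $S \setminus S'_R$ (and $S \setminus S'_L$) need not be discrete in general: a right-isolated point can simultaneously be a left-limit of other right-isolated points, so one cannot feed $T$ directly into Theorem~\ref{small_discrete}. The saturation argument above circumvents this by extracting a single (possibly non-standard) $\delta^*$ that right-isolates infinitely many points of $T \cap (0,\varepsilon)$ uniformly, thereby producing a genuinely discrete definable witness.
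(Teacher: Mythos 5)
Your proposal is correct and follows essentially the same route as the paper: reduce (3) to (2) via $y \mapsto -y$, translate the offending limit point to $0$, and use compactness/saturation to extract a single $\delta > 0$ for which $\{a \in S : (a,a+\delta)\cap S = \emptyset\}\cap(0,\varepsilon)$ is infinite and discrete, contradicting Theorem~\ref{small_discrete}. Your treatment of (1) (noting $S\setminus S'$ is already discrete) and your explicit identification of why $S\setminus S'_R$ need not be discrete are just more detailed versions of what the paper leaves implicit.
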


\begin{proof}
We only write the proof of (2), since (1) follows by a similar proof, and (3) follows from (2) applied to $-S$. Suppose, the contrary, $x \in R$ is a limit point of $S \setminus S'_R$ from the right. Without loss of generality, $x = 0$ (moving $S$ by a translation). Now for any $\epsilon > 0$, there are infinitely many points of $S \setminus S'_R$ in $(0, \epsilon)$, and so by compactness there must be some $\delta \in R$, $\delta > 0$ such that $$\{a \in S : (a, a + \delta) = \emptyset \} \cap (0, \epsilon)$$ is infinite. But the set above is discrete, so we have contradicted Theorem~\ref{small_discrete}.
\end{proof}

In particular notice that the above corollary implies that in a structure $\mc{R}$ with strong theory for no definable set $S$ can $S^{\prime}$ be finite and nonempty, since otherwise each point in $S^{\prime}$ would be a limit point of $S \setminus S^{\prime}$.

We summarize several further corollaries of our previous results.

\begin{cor}\label{nowheredense} Let $\mc{R}=\langle R; +, <, \dots \rangle$ be a densely-ordered Abelian group and suppose that $\Th(\mc{R})$ is strong.  Let $\C$ be a big model of $\Th(\mc{R})$.  Then:
\begin{enumerate}
\item If $X$ is an infinite definable discrete set in $\C$ then $X$ has no accumulation point in $\C$.  In particular $X$ must be closed.
\item If $\mc{R}$ is an expansion of an ordered field then in $\C$ there are no infinite definable discrete sets.
\item If $\mc{R}$ is definably complete and  $X \subseteq \C$ is definable and nowhere dense then $X$ is discrete.
\end{enumerate}
\end{cor}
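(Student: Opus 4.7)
The plan is to address the three parts in turn, each ultimately reducing to Theorem~\ref{small_discrete} (and, for part (1), to Corollary~\ref{limitpts}). For part (1), I would observe that if $X$ is discrete then each $x \in X$ has some $\delta > 0$ with $(x - \delta, x + \delta) \cap X = \{x\}$, so $x \notin X'$. Hence $X \setminus X' = X$, and Corollary~\ref{limitpts}(1) applied to $S = X$ immediately gives that no point of $\C$ is an accumulation point of $X$. Therefore $X$ is vacuously closed.

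For part (2), given an infinite definable discrete set $X$ in an expansion $\mc{R}$ of an ordered field, I would first translate $X$ by an element of $X$ and possibly apply the reflection $X \mapsto -X$ so that $X \cap (0, \infty)$ is infinite. The field structure then supplies, for every $\varepsilon > 0$, an infinite definable discrete subset of $(0, \varepsilon)$: if $X$ is unbounded above, $Y_\varepsilon := \{1/x : x \in X,\, x > 1/\varepsilon\}$ works, since $x \mapsto 1/x$ is a homeomorphism of the positive cone; if instead $X \cap (0, \infty) \subseteq (0, M)$ for some $M > 0$, the scaling $x \mapsto (\varepsilon/M) x$ takes $X \cap (0, M)$ injectively into $(0, \varepsilon)$. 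Theorem~\ref{small_discrete} then gives the contradiction.

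Part (3) requires more care. Suppose $X$ is definable and nowhere dense but not discrete; then some $x \in X$ is not isolated, and a short argument shows that $X$ accumulates at $x$ from at least one side, so after translation and reflection I may assume $X \cap (0, \delta)$ is infinite for every $\delta > 0$. Fix $\varepsilon > 0$. The closure $\overline{X}$, definable via $\forall \eta > 0 \, \exists z \in X \, (|z - y| < \eta)$, is closed, nowhere dense, and has $\overline{X} \cap (0, \varepsilon)$ infinite. Using definable completeness, for each $p \in (0, \varepsilon) \setminus \overline{X}$ I would define $l_p = \sup(\overline{X} \cap [0, p))$ (or $0$ if the set is empty) and $r_p = \inf(\overline{X} \cap (p, \varepsilon])$ (or $\varepsilon$), the endpoints of the maximal complementary gap containing $p$. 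Setting $M_\varepsilon := \{(l_p + r_p)/2 : p \in (0, \varepsilon) \setminus \overline{X}\}$, the set $M_\varepsilon$ is definable; each midpoint is isolated in $M_\varepsilon$ inside its own gap $(l_p, r_p)$, since distinct maximal gaps are pairwise disjoint; and $M_\varepsilon$ is infinite because a nowhere dense closed set with only $n$ complementary gaps in $(0, \varepsilon)$ consists of at most $n+1$ singletons, whereas $\overline{X} \cap (0, \varepsilon)$ is infinite. Thus $M_\varepsilon$ is an infinite discrete definable subset of $(0, \varepsilon)$, and Theorem~\ref{small_discrete} provides the contradiction.

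The chief obstacle in part (3) is justifying the midpoint $(l_p + r_p)/2$, which demands $2$-divisibility of $\mc{R}$. I would dispatch this with a brief lemma: in a densely ordered, definably complete ordered Abelian group, density ensures that $\inf\{n\delta : \delta > 0\} = 0$, and a squeeze argument then identifies $\sup\{x : nx \leq g\}$ with $\inf\{y : ny > g\}$ and forces $ns = g$ to be solvable for every $g \in R$ and every positive integer $n$; hence $\mc{R}$ is divisible and the midpoints are well-defined uniformly in $p$.
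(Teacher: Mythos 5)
Your proof is correct and follows essentially the same route as the paper's: part (2) via reciprocals (unbounded case) and scalings (bounded case) reducing to Theorem~\ref{small_discrete}, and part (3) via midpoints of the complementary gaps of the closure of $X$. The only notable differences are harmless: you route part (1) through Corollary~\ref{limitpts} rather than citing Lemma~\ref{discrete_family}, you apply Theorem~\ref{small_discrete} directly at the end of (3) instead of reducing to (1), and you explicitly verify the $2$-divisibility needed to define midpoints, a point the paper's ``which we easily see is definable'' leaves implicit.
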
 

\begin{proof}  (1) is an immediate consequence of Lemma \ref{discrete_family}.  For (2) let $\C$ have field structure and suppose that $D \subseteq C$ is discrete and definable.  Without loss of generality assume that $D \subseteq C^{>0}$.  If $D$ is unbounded then the 
set $D^{-1}=\{x^{-1}: x \in D\}$ is discrete and has $0$ has an accumulation point violating (1).  Hence $D$ is bounded.  Say that $\Delta>d$ for all $d \in D$.  
But then for any $\varepsilon>0$ the set $D_{\varepsilon}:=\{\frac{\varepsilon}{\Delta}d: d \in D\}$ is a discrete subset of $(0, \varepsilon)$.  Hence by Theorem \ref{small_discrete} $D$ must be finite.  For (3) suppose that $X \subset C$ is infinite, nowhere dense, and not discrete.  Without loss of generality $X$ is closed.  Thus $C\setminus X$ is open and so by definable completeness consists of an infinite family of open intervals.  Let $Y$ be the set of all midpoints of intervals in $C\setminus X$, which we easily see is definable.  But then $Y$ is discrete and infinite, moreover as $X$ is not discrete then $Y$ must have an accumulation point, violating (1).
\end{proof}

The following corollary provides a counterpoint to Theorem \ref{small_discrete}, namely we can not have definable discrete sets which are too "spread out".

\begin{cor}  
\label{discr_sparse}
Suppose that $\mc{R}= \langle R; +, <, \dots \rangle$ is any ordered Abelian group and that $T=\Th(\mc{R})$ is strong.  Let $D \subseteq R$ be an infinite definable discrete set which is not bounded above.  For any $\delta > 0$, let $$D(\delta):=\{ x \in D : (x-\delta, x+\delta) \cap D =\{x\}\}.$$ Then there is some positive $\delta \in R$ such that $D(\delta) = \emptyset$.
\end{cor}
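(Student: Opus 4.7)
The plan is to argue by contradiction. Suppose $D(\delta) \neq \emptyset$ for every $\delta > 0$; equivalently, the definable ``isolation function''
\[
h(x) = \min\{|x-y| : y \in D,\ y \neq x\}
\]
(well-defined since $D$ is discrete and, in the densely-ordered case, closed by Corollary~\ref{nowheredense}(1)) is unbounded above on $D$.

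First, I would observe that $D(\delta)$ is in fact infinite for every $\delta > 0$. If $D(\delta)$ were finite for some $\delta$, then $h$ would take only finitely many values on $D(\delta)$ and be bounded by $\delta$ on the cofinite complement $D \setminus D(\delta)$, yielding a global bound on $h$---contradicting the hypothesis.

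The main strategy is to reduce to Theorem~\ref{small_discrete}. Given $\varepsilon > 0$, consider the definable set
\[
X_\varepsilon = \{y - x : x, y \in D,\ 0 < y - x < \varepsilon\} \subseteq (0, \varepsilon)
\]
of small positive differences of pairs of $D$-points. If $X_\varepsilon$ is infinite for every $\varepsilon > 0$, then Corollary~\ref{nowheredense}(1) forces each $X_\varepsilon$ to have no accumulation points and hence to be discrete, so that Theorem~\ref{small_discrete} immediately yields that $\Th(\mc{R})$ is not strong, a contradiction.

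The main obstacle is the case when $X_{\varepsilon_0}$ is finite for some $\varepsilon_0 > 0$: this says $D$ admits only finitely many distinct positive gap values below $\varepsilon_0$, so on a definable cofinite subset $D^* \subseteq D$ the gap function $g(x) = s(x) - x$ (where $s$ is the successor map on $D$) is bounded below by $\varepsilon_0$, while $h$ remains unbounded on $D^*$. Here the plan is to apply Proposition~\ref{indep}: for each $n \in \N$, I would produce an infinite definable subset $S_n \subseteq D^*$ (concretely, $S_n = D^*(M_n) \cap [a_n, \infty)$ for suitable $M_n$ and threshold $a_n$) such that the ``super-increasing'' growth rate of its enumeration $s_1 < s_2 < \cdots$ (specifically $s_{k+1} > 2(s_1 + \cdots + s_k)$) bounds the number of representations of any $g \in R$ as an $n$-fold sum of elements of $S_n$ by $n!$. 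Proposition~\ref{indep} then yields burden $\geq n$ for every $n$, contradicting finite burden in one variable and hence contradicting strongness. The delicate step is selecting $M_n$ and the threshold $a_n$ so that the super-increasing property holds, using only translations and the unbounded growth of $h$ on $D^*$---since the ordered Abelian group structure provides no multiplicative scaling.
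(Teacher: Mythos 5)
Your proposal takes a different route from the paper's proof (which realizes, by compactness in a saturated model, parameters $\varepsilon_i,\ell_i,r_i$ with $(3\ell_{i+1},3r_{i+1})\subseteq(0,\varepsilon_i)$ and $D(\varepsilon_i)\cap(\ell_i,r_i)$ infinite, chosen by reverse induction so the $\varepsilon_i$ grow as $i$ decreases, and then feeds the sets $D(\varepsilon_i)\cap(\ell_i,r_i)$ into Lemma~\ref{discrete_family}), and both branches of your case split have genuine gaps. In the first branch you assert that Corollary~\ref{nowheredense}(1) ``forces $X_\varepsilon$ to have no accumulation points and hence to be discrete.'' That corollary has discreteness as a \emph{hypothesis}, not a conclusion: it says nothing about an arbitrary infinite definable set such as the difference set $X_\varepsilon$, which a priori could be dense in $(0,\varepsilon)$ (take $D$ whose consecutive gaps equidistribute modulo some period). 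So the reduction to Theorem~\ref{small_discrete} is unjustified as written; you would need a separate argument to extract an infinite definable discrete subset of $(0,\varepsilon)$.

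In the second branch, finiteness of $X_{\varepsilon_0}$ does give a uniform positive lower bound on all gaps (not that gaps are $\geq\varepsilon_0$ on a cofinite subset --- consider $D=\frac{\varepsilon_0}{2}\cdot\Z^{\geq 0}$), but the key step, producing a super-increasing definable $S_n=D^*(M_n)\cap[a_n,\infty)$, is impossible in general. Take $D=\{n^2:n\in\N\}\subseteq\R$: then $D(\delta)\neq\emptyset$ for every $\delta$ and each $X_{\varepsilon_0}$ is finite, yet every set of the form $D(M)\cap[a,\infty)$ is a tail $\{n^2:n\geq n_0\}$, whose enumeration grows quadratically while its partial sums grow cubically, so $s_{k+1}>2(s_1+\cdots+s_k)$ fails for all large $k$; moreover the number of representations of an element as a sum of $n$ squares from such a tail is unbounded, so Proposition~\ref{indep} cannot be invoked with any fixed bound $k$. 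The sparse subsequence you would need is not obtainable by definable thresholding on the isolation radius. The idea you are missing is precisely the paper's use of a nonstandard model: only there can one place infinitely many $\varepsilon_{i+1}$-separated points of $D$ inside an interval $(\ell_{i+1},r_{i+1})$ with $3r_{i+1}<\varepsilon_i$, after which Lemma~\ref{discrete_family} applies directly and uniformly to the densely and discretely ordered cases alike.
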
 

\begin{proof} Otherwise for any $\delta \in R$ the set $D(\delta)$ is infinite.  We will show that, in some nonstandard model, there is a family of sets of the form $D_i = D(\varepsilon_i) \cap I_i$ which satisfy the hypotheses of Lemma~\ref{discrete_family}, where $I_i$ is the open interval $(\ell_i, r_i)$.

We do this by choosing all of the parameters $\ell_i, r_i$, and $\varepsilon_i$ using compactness. It is enough to show that the following partial type $\Gamma$ is consistent:

\begin{enumerate}[i]
\item $(3 \ell_{i+1}, 3 r_{i+1}) \subseteq (0, \varepsilon_i)$;
\item $0 < 3 \varepsilon_{i+1} < \varepsilon_i$; and
\item For every $n \in \N$, $| D(\varepsilon_i) \cap (\ell_i, r_i) | \geq n$.
\end{enumerate}

We claim that any finite $\Gamma_0 \subseteq \Gamma$ is satisfied by values of $\ell_i, r_i,$ and $\varepsilon_i$ in the model $\mc{R}$ and conclude by compactness. To see this, first pick a sufficiently large $N \in \N$ such that all the $\ell_i, r_i,$ and $\varepsilon_i$ mentioned in $\Gamma_0$ satisfy $i \in \{1, \ldots, N\}$ and also $N$ bounds any $n$ from a formula of type (iii) in $\Gamma_0$. Pick $\varepsilon_N > 0$ arbitrarily, then pick $\ell_N, r_N$ positive and sufficiently far apart that $| D(\varepsilon_N) \cap (\ell_N, r_N) | \geq N$. Then continue by reverse induction on $i$: once we have picked $\ell_{i+1}, r_{i+1},$ and $\varepsilon_{i+1}$ (for $i \geq 0$), first pick $\varepsilon_i > \max(3 \varepsilon_{i+1}, 3 r_{i+1})$, then pick $\ell_i$ and $r_i$ far enough apart that $|D(\varepsilon_i) \cap (\ell_i, r_i)| \geq N$.

\end{proof}

\begin{cor} The theory of the structure $\langle \R; +, <, 2^{\Z} \rangle$ is not strong, and the theory of the structure $\langle \Z; +, <, 2^{\N} \rangle$ is not strong.
\end{cor}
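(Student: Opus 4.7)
The plan is to apply Theorem~\ref{small_discrete} for $\langle \R; +, <, 2^{\Z}\rangle$ and Corollary~\ref{discr_sparse} for $\langle \Z; +, <, 2^{\N}\rangle$. In each case the predicate $P$ is a definable discrete set whose spacing is geometric, and this geometric spacing is what triggers the relevant obstruction.

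\textbf{First structure.} Let $P = 2^{\Z}$. The ambient group $\R$ is densely ordered, and $P$ is discrete (each $2^k$ is isolated in $P$ by its immediate neighbors $2^{k \pm 1}$) and accumulates at $0$ from the right. Thus for every $\varepsilon > 0$, the definable set $P \cap (0, \varepsilon)$ is an infinite discrete subset of $(0, \varepsilon)$. The schema ``for every $\varepsilon > 0$ and every $n \in \N$, $P \cap (0, \varepsilon)$ has at least $n$ elements'' consists of first-order sentences true in $\R$, hence true in any saturated model $\C$ of the theory; likewise, ``every point of $P$ is isolated in $P$'' is first-order and transfers. So the hypothesis of Theorem~\ref{small_discrete} holds in $\C$, and the theory is not strong.

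\textbf{Second structure.} Let $P = 2^{\N}$. Here the ambient order is discrete, so Theorem~\ref{small_discrete} is inapplicable, but $P$ is an infinite definable discrete subset of $\Z$ unbounded above, putting us in the setting of Corollary~\ref{discr_sparse}. For any $\delta \in \Z_{>0}$, pick $k$ large enough that $2^{k-1} > \delta$; the nearest elements of $P$ to $2^k$ are $2^{k-1}$ and $2^{k+1}$, both at distance at least $2^{k-1}$ from $2^k$, so $2^k \in P(\delta)$. Thus $P(\delta) \neq \emptyset$ for every $\delta > 0$, a property expressible by a first-order scheme and hence inherited by any elementary extension. The contrapositive of Corollary~\ref{discr_sparse} then shows the theory is not strong.

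There is no real obstacle here: both assertions are immediate corollaries of the machinery developed earlier, the only content being the elementary observation that gaps between consecutive powers of $2$ shrink geometrically toward $0$ (blocking strength in the real case) and grow geometrically toward $+\infty$ (blocking strength in the integer case).
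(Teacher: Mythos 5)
Your proof is correct and follows exactly the route the paper intends: the corollary is stated immediately after Corollary~\ref{discr_sparse} with no written proof, the point being that $2^{\Z}$ accumulates at $0$ in $\R$ (triggering Theorem~\ref{small_discrete} after the routine transfer to a saturated model) while the gaps in $2^{\N}$ grow without bound in $\Z$ (so $D(\delta)\neq\emptyset$ for every $\delta$, contradicting Corollary~\ref{discr_sparse}). Nothing to add.
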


The following Proposition can be thought of as a version of the definable Baire property (for which see \cite{forservi}).

\begin{prop}\label{baire}  Suppose that $\mc{R}= \langle R; +, <, \dots \rangle$ is a densely ordered Abelian group such that $\Th(\mc{R})$ is strong.  We may not find an interval $I$ and a definable family $D(\delta)$ for $\delta \in I$ so that each $D(\delta)$ is discrete, if $\delta_1<\delta_2 \in I$ then $D(\delta_1) \subseteq D(\delta_2)$, and $D=\bigcup_{\delta \in I}D(\delta)$ is somewhere dense.
\end{prop}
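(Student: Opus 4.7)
The plan is to apply Theorem~\ref{small_discrete} directly, deriving a contradiction by exhibiting, for every $\varepsilon > 0$ in a saturated model, an infinite definable discrete subset of $(0, \varepsilon)$.

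Suppose toward contradiction that $\Th(\mc{R})$ is strong and that such an interval $I$ and monotonic definable family $\{D(\delta) : \delta \in I\}$ exist, with $D = \bigcup_{\delta \in I} D(\delta)$ dense in some open interval $J$. Pass to a sufficiently saturated elementary extension $\C \succeq \mc{R}$; strongness is a property of the complete theory, so $\Th(\C) = \Th(\mc{R})$ is still strong, and the interpretations in $\C$ form a definable monotonic family of discrete sets whose union is dense in the corresponding interval $J^{\C}$.

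Fix any $\varepsilon > 0$ in $\C$ and, using density of the order, choose a subinterval $(a,b) \subseteq J^{\C}$ with $0 < b - a < \varepsilon$. Consider the partial type
$$p(\delta) = \{\delta \in I\} \cup \{\, |D(\delta) \cap (a,b)| \geq n \; : \; n \in \N\,\},$$
where each cardinality bound is expressed first-order as the existence of $n$ pairwise distinct elements of $D(\delta) \cap (a,b)$. This type is finitely satisfiable in $\C$: for any $n$, density of $D$ in $J^{\C}$ supplies distinct $x_1, \ldots, x_n \in D \cap (a,b)$, each lying in some $D(\delta_i)$ by definition of $D$; by the monotonicity hypothesis, all $x_i$ then lie in $D(\delta^*)$ where $\delta^* = \max_i \delta_i$, which belongs to $I$ since $I$ is an interval. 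Saturation of $\C$ yields $\delta_* \in I$ realizing $p$, so that $D(\delta_*) \cap (a,b)$ is an infinite definable discrete set contained in an interval of length less than $\varepsilon$; translating by $-a$ places it inside $(0, \varepsilon)$.

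Since $\varepsilon > 0$ was arbitrary, the hypothesis of Theorem~\ref{small_discrete} is met in $\C$, contradicting strongness of $\Th(\mc{R})$. The only step requiring care is the finite satisfiability of $p(\delta)$, which depends crucially on monotonicity of the family: it is precisely monotonicity that lets us collect arbitrarily many points from the dense union $D$ inside a single member $D(\delta^*)$. Without that hypothesis the argument would collapse, which is consistent with the fact that a somewhere-dense union of discrete sets is otherwise easy to produce (e.g.\ as a countable union in the standard real line).
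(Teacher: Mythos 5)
Your proof is correct, but it takes a different route from the paper's. The paper proves Proposition~\ref{baire} by going back to Lemma~\ref{discrete_family} directly: it iteratively constructs the nested family $D_n, \varepsilon_n$ (choose $\delta_0$ with $D(\delta_0)\cap J$ infinite, isolate an $\varepsilon_0$-separated infinite piece, pick a point $a\in D_0$ with $(a,a+\varepsilon_0)\subseteq J$, use density of $D$ in $J$ to find $\delta_1$ with $3\cdot D(\delta_1)$ having infinite trace on $(a,a+\varepsilon_0)$, translate by $-a$, and repeat). You instead observe that a single compactness/saturation step --- made possible precisely by the monotonicity of the family, as you correctly emphasize --- already yields, for every $\varepsilon>0$ in a saturated model, some $D(\delta_*)$ with infinite trace on an interval of length less than $\varepsilon$, and then you invoke Theorem~\ref{small_discrete}. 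Both arguments ultimately rest on Lemma~\ref{discrete_family}, but yours factors the work through Theorem~\ref{small_discrete} and so replaces the paper's inductive construction by a one-step reduction; this is shorter and arguably cleaner, since Theorem~\ref{small_discrete} already packages the statement ``arbitrarily small infinite definable discrete sets contradict strength.'' The paper's more hands-on construction does essentially the same thing but re-derives the nested family explicitly (and in doing so also uses monotonicity implicitly, in the initial compactness step choosing $\delta_0$). One cosmetic remark: $\delta^*=\max_i\delta_i$ is itself one of the $\delta_i$, so it lies in $I$ automatically; you do not need to appeal to $I$ being an interval there.
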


\begin{proof}  This follows readily from Proposition \ref{discrete_family}.  Suppose that there were a counterexample $D(\delta)$ with $\delta \in I$ so that $D$ is dense in an interval $J$.  We build a family $D_n,\varepsilon_n$ violating Proposition \ref{discrete_family}.  By compactness choose $\delta_0 \in I$ so that $D(\delta_0) \cap J$ is infinite.  Pick $\varepsilon_0$ so that 
\[D_0:=\{x \in D(\delta_0) \cap J : (x-\varepsilon_0,x+\varepsilon_0) \cap D(\delta) = \{x\}\}\] is infinite.  
Thus we have $D_0$ and $\varepsilon_0$.  Next pick $a \in D_0 \cap J$ so that $(a, a+\varepsilon_0) \subseteq J$.  As $D$ is dense in $J$ we may find $\delta_1 \in I$ so that $3 \cdot D(\delta_1) \cap (a, a+\varepsilon_0)$ is infinite.  Pick $\varepsilon_1$ so that 
\[D^{\prime}:=\{x \in 3 \cdot D(\delta_1) \cap (a, a+\varepsilon_0): (x-\varepsilon_1, x+\varepsilon_1) \cap D(\delta_1)=\{x\}\}\] is infinite.
Let $D_1=\{x-a: x \in D^{\prime}\}$.  Continue in this manner to construct a sequence $D_n, \varepsilon_n$ for $n \in \omega$ violating Proposition \ref{discrete_family}.
\end{proof}

This  form of the Baire property has the following corollary, indicating that the image of a discrete set under a definable function must be nowhere dense.  This should be compared  to results from, for example, \cite{phillipdisc} where having a discrete set whose image under a definable function whose image is somewhere dense strongly indicates the structure in question is "wild".

\begin{cor}  
\label{images_discrete}
Let $\mc{R}= \langle R; +, <, \dots \rangle$.  Suppose that $\Th(\mc{R})$ is strong, densely-ordered, and satisfies DC.  If $P$ is a definable discrete set and $f: R^n \to R$ is a definable function then $f[P^n]$ is discrete.
\end{cor}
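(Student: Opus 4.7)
The plan is to assume, for contradiction, that $f[P^n]$ is not discrete, and derive a violation of Proposition~\ref{baire}. Since $\mathcal{R}$ is definably complete, densely-ordered, and strong, Corollary~\ref{nowheredense}(3) forces $f[P^n]$ to be somewhere dense, say dense in an open interval $J \subseteq R$. I would then exhibit a definable monotone family $\{D(\delta)\}_{\delta \in R^{>0}}$ of discrete subsets of $R$ with somewhere dense union. The natural choice is $D(\delta) := f\bigl[(P \cap [-\delta, \delta])^n\bigr]$; monotonicity in $\delta$ and the identity $\bigcup_{\delta > 0} D(\delta) = f[P^n]$ are immediate.

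The core technical step is showing that each $D(\delta)$ is discrete, which I would reduce to the auxiliary claim that in our setting every bounded definable discrete subset of $R$ is finite. Granting this, $P_\delta := P \cap [-\delta, \delta]$ is finite and $D(\delta) = f[P_\delta^n]$ is trivially discrete. For the auxiliary claim, let $B$ be bounded, definable, discrete, and infinite. By Corollary~\ref{nowheredense}(1), $B$ has no accumulation point in $\mathfrak{C}$. Definable completeness yields a definable successor $s: B \setminus \{\max B\} \to B$ given by $s(x) = \inf\{y \in B : y > x\}$ (this infimum lies in $B$, else it would be an accumulation point). Consider the gap function $g(x) := s(x) - x > 0$. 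If $\inf g = 0$, a saturation argument realizes a point $y$ lying strictly between some $x^*$ and $s(x^*)$ with $g(x^*)$ arbitrarily small, making $y$ an accumulation point of $B$ and contradicting Corollary~\ref{nowheredense}(1). If $\inf g = \eta > 0$, one covers $B$ by translates of intervals of length less than the $\varepsilon$ provided by Theorem~\ref{small_discrete}; each such translate intersected with $B$ must be finite (by Theorem~\ref{small_discrete} applied after translation into $(0,\varepsilon)$), forcing $B$ itself to be finite.

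Once the auxiliary claim is in hand, Proposition~\ref{baire} applied to $\{D(\delta)\}_{\delta > 0}$ delivers the contradiction, and hence $f[P^n]$ is nowhere dense, so discrete by Corollary~\ref{nowheredense}(3). The main obstacle is the second case of the auxiliary claim ($\inf g = \eta > 0$): in potentially non-Archimedean settings the cover of $B$ by $\varepsilon$-translates may involve non-standardly many pieces, so the finite union argument must be internalized --- likely by iterating the successor function definably and combining Theorem~\ref{small_discrete} with Corollary~\ref{discr_sparse} to rule out any pseudofinite obstruction to the argument.
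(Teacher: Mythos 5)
Your overall architecture --- reduce to ``nowhere dense'' via Corollary~\ref{nowheredense}(3), form the monotone definable family $D(\delta)=f\left[(P\cap[-\delta,\delta])^n\right]$, and invoke Proposition~\ref{baire} --- is the same as the paper's. But the proof breaks at the step where you claim every $D(\delta)$ is discrete, which you reduce to the auxiliary claim that every bounded definable discrete subset of $R$ is finite. That claim is false under the stated hypotheses (strong, densely ordered, definably complete). Concretely: the structure of Proposition~\ref{zq} (or its reduct $\mathcal{R}_0$) is strong, densely ordered and definably complete; pass to an $\aleph_1$-saturated elementary extension and let $N$ be a nonstandard element of $Z$. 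Then $Z\cap[0,N]$ is definable, bounded, discrete and infinite, with all gaps equal to $1$. This is exactly the ``$\inf g=\eta>0$, non-Archimedean'' case you flag at the end, and there is no way to ``internalize the finite union argument,'' because the statement you are trying to internalize is simply not true: Theorem~\ref{small_discrete} cannot be applied to such a set without a field structure with which to rescale it into $(0,\varepsilon)$, and Corollary~\ref{discr_sparse} requires the set to be unbounded above. (Your first case is also shaky --- a single pair $x^*<s_B(x^*)$ with small gap does not by itself produce an accumulation point --- but the second case is the fatal one.)

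The paper avoids this trap. It only establishes discreteness of $f[P(\delta)]$ for small $\delta$, where $P(\delta)=P\cap(-\delta,\delta)$ is literally finite because $0$ is isolated in the closed set $P$; it then uses definable completeness to take $\Delta$ equal to the supremum of those $\delta$ for which $f[P(\delta)]$ is discrete, applies Proposition~\ref{baire} to the subfamily below $\Delta$ to conclude that $f[P(\Delta)]$ is still discrete, and derives the contradiction at the critical value: since $P$ is closed, $P\cap[-\Delta,\Delta]$ differs from $P(\Delta)$ only by the points $\pm\Delta$ when $n=1$ (and by the boundary faces of the box for larger $n$, which is where an induction on $n$ --- absent from your write-up --- is needed), so its image cannot jump from being discrete to being somewhere dense. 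To salvage your proposal you would need to replace the false finiteness claim with this critical-parameter argument.
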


\begin{proof}  We begin by noting that by Corollary \ref{nowheredense}(3) it suffices to show that $f[P^n]$ is nowhere dense. Suppose this fails.  By Corollary \ref{nowheredense}(1) $P$ is closed.   Without loss of generality we may  assume that $f[P^n] \subseteq I$ for an interval $I$ and that $f[P^n]$ is dense in $I$.  Finally for convenience we may assume that $0 \in P$.

We proceed by induction on $n$.  Suppose that $n=1$.  For $\delta \in R^{>0}$ let 
$P(\delta)=P \cap (-\delta,\delta)$.  Thus $f[P]=\bigcup_{\delta \in R^{>0}}f[P(\delta)]$.  As $P$ is closed and discrete with least element $0$ and $\Th(\mathcal{R})$ is definably complete it must be the case that $P(\delta)$ is finite and non-empty for small enough $\delta$.  In particular $f[P(\delta)]$ is discrete for all small enough $\delta$.  By Proposition \ref{baire} and Corollary \ref{nowheredense} $f[P(\delta)]$ must be dense in $I$ for all sufficiently large $\delta$.  Thus by definable completeness we may find $\Delta \in R$, the supremum of all $\delta \in R$ so that $f[P(\delta)]$ is discrete.  Note that as $P$ is closed $\Delta \in P$ or $-\Delta \in P$.  Hence $f[P(\Delta)]$ is discrete but $f[P(\Delta)] \cup \{f(\Delta)\}\cup\{f(-\Delta)\}$ is somewhere dense in $I$, which is clearly absurd.

Now suppose we have $f[P^{n+1}]$ dense in $I$.  For $\delta \in R^{>0}$ let 
$P(\delta)=P \cap (-\delta, \delta)^{n+1}$.  As in the previous case we find $\Delta \in R^{>0}$ which is the supremum of all $\delta$ so that $f[P(\delta)]$ is discrete and note that $\Delta \in P$ or $-\Delta \in P$. 
 For $1 \leq i \leq n+1$ let $P^+_i$ be the set \[\{ \ob{a} \in P^{n+1} : -\Delta \leq a_j \leq \Delta \text{ for all } 1 \leq j \leq n+1 \text{ and } a_i = \Delta\}\] and let $P_i^-$ be defined analogously with $\Delta$ replaced by $-\Delta$.
Thus $f[P(\Delta)^{n+1}]$ is discrete but
 \[f[P(\Delta)^{n+1}] \cup \bigcup_{1 \leq i \leq n+1}f[P^+_i] \cup \bigcup_{1 \leq i \leq n+1}f[P^-_i]\] is somewhere dense in $I$.  Thus $f[P^{\circ}_{i^*}]$ is somewhere dense for some $i^*$ and $\circ \in \{+,-\}$, for simplicity assume that $i^*=1$ and $\circ=+$.  But then setting $\tilde{f}: R^n \to R$ to be the function $\tilde{f}(x_1, \dots x_n)=f(\Delta,x_1, \dots x_n)$ we have that $\tilde{f}[P^n]$ is somewhere dense and we are done by induction.

\end{proof}

\subsection{Discrete definable sets in Archimedean ordered groups}

We continue the study of definable discrete unary sets in strong ordered Abelian groups $\mc{R} = \langle R; +, <, \ldots \rangle$, focusing on the case where there is a model $R$ of the theory which is \emph{Archimedean}: that is, for any two $a, b \in R^{>0}$, there is some $n \in \N$ such that $n a > b$ and $n b > a$. The goal will be the following theorem, which will be proved by a series of lemmas:

\begin{thm}
\label{discrete_R}
Suppose that $\mc{R}= \langle R; +, <, \dots \rangle$ is an ordered Abelian group whose complete theory is strong and such that the universe $R$ is Archimedean, and let $P \subseteq R$ be definable, infinite, and discrete. Then:

\begin{enumerate}
\item The set $\Delta P$ of all differences $x - y$ such that $x, y \in P$ and $x$ is the immediate successor of $y$ in $P$, is finite;
\item $P = F \cup X_1 \cup \ldots \cup X_k$ where $F$ is finite and each $X_i$ is an infinite $\N$-indexed arithmetic progression (i.e. a set of the form $\{a + b \cdot i : i \in \N\}$ for some fixed $a,b \in R$);
\item The $X_i$ above may be taken to be ``commensurable'' in the sense that there exists a single $\Z$-indexed arithmetic progression $Y$ containing each $X_i$ as a subsequence; and
\item If $f: P \rightarrow R$ is definable, then the image of $f$ is discrete.
\end{enumerate}
\end{thm}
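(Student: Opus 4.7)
The plan is to prove the four parts in sequence, with (1) as the core technical step and (2)--(4) following from (1) combined with the Archimedean structure of $R$ and repeated use of the tameness corollaries already established.

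For (1), argue by contradiction: assume $\Delta P$ is infinite. The goal is to produce a family $(D_i, \varepsilon_i)_{i \in \N}$ satisfying the hypotheses of Lemma~\ref{discrete_family}, contradicting strength. Using the Archimedean hypothesis to embed $R \hookrightarrow \R$, split into cases based on whether $\Delta P$ is unbounded or bounded in $R$. In the unbounded case, $P$ has arbitrarily large gaps; applying Corollary~\ref{discr_sparse} at successively higher gap-thresholds and using Archimedean to normalize scales, one extracts a descending chain of definable infinite discrete subsets of $P$ living in shrinking intervals. In the bounded case, $\Delta P$ has an accumulation point in $\R$, so in the monster (by saturation) we find pairs $d, d' \in \Delta P$ with $d - d'$ arbitrarily small relative to standard elements of $R$; these differences then generate definable infinite discrete sets in small intervals around $0$, again fitting Lemma~\ref{discrete_family}.

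For (2), once $\Delta P = \{d_1, \ldots, d_k\}$ is finite, partition $P$ into the definable cells $Q_j = \{p \in P : s(p) - p = d_j\}$ for $1 \le j \le k$, where $s$ denotes the successor in $P$. Each $Q_j$ is itself a definable discrete set, so applying (1) to $Q_j$ shows $\Delta Q_j$ is finite; iterating this ``higher-order'' analysis, together with Archimedean to pin positions in $\R$, forces the gap-code sequence $(j_i)_{i \in \N}$ (where $p_{i+1} - p_i = d_{j_i}$) to be eventually periodic, yielding the claimed decomposition of $P$ into a finite set together with finitely many $\N$-indexed arithmetic progressions. For (3), we show that $H := \langle d_1, \ldots, d_k \rangle \subseteq R$ is cyclic: if $H$ had $\Z$-rank $\geq 2$, a $\Z$-independent pair $d_i, d_j$ together with the gap-counting functions along $P$ would, in the style of Proposition~\ref{indep}, yield an inp-pattern of arbitrary finite depth, contradicting strength. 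Hence $H = d \cdot \Z$ for some $d$, and all the $X_i$ of (2) lie in the common $\Z$-indexed progression $\{a + id : i \in \Z\}$.

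For (4), given the decomposition of $P$ from (2)--(3) and a definable $f: P \to R$, it suffices to show each $f(X_i)$ is discrete. If not, $f(X_i)$ has an accumulation point, which may be taken to be $0$; using the $\N$-indexing of $X_i$ to definably list the short gaps in $f(X_i)$, and combining with Corollary~\ref{limitpts} and Theorem~\ref{small_discrete}, one produces a definable infinite discrete set in $(0, \varepsilon)$ for arbitrarily small $\varepsilon > 0$, contradicting Theorem~\ref{small_discrete}. Hence each $f(X_i)$ is discrete and $f(P) = f(F) \cup \bigcup_i f(X_i)$ is discrete as well.

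The principal obstacle is part (1): extracting a shrinking family of discrete sets from the bare hypothesis that $\Delta P$ is infinite. The two ways $\Delta P$ can fail to be finite---unbounded gap sizes, or accumulating gap sizes---require separate constructions feeding into Lemma~\ref{discrete_family}, and the accumulation case in particular depends on a careful use of saturation in $\C$ since $R$ itself admits no infinitesimals. Part (4) is also nontrivial, as Corollary~\ref{images_discrete} is not available here (the Archimedean setting lacks definable completeness in general), so one must re-derive image-discreteness from the fine structural control provided by (2)--(3).
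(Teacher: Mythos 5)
Your high-level architecture for part (1) (show $\Delta P$ is bounded and has no accumulation points, feeding the failure into Lemma~\ref{discrete_family}, Corollary~\ref{discr_sparse} and Corollary~\ref{limitpts}) does track the paper, but two of your key steps do not go through as described. In the unbounded case you propose to ``normalize scales'' to push large-gap subsets of $P$ into shrinking intervals; there is no division in an ordered Abelian group, so no such rescaling exists, and you also cannot apply Corollary~\ref{discr_sparse} to $P$ directly, since $P(\delta)$ can be empty for large $\delta$ even when $\Delta P$ is unbounded (e.g.\ $P=\{n^2,n^2+1:n\in\N\}$ in $\Z$ has unbounded gaps but $P(2)=\emptyset$). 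The paper's fix is the definable ``record-gap'' set $P'=\{x\in P:\forall y\in P\,(y<x\Rightarrow s_P(y)-y<s_P(x)-x)\}$, whose gaps tend to infinity, so that \emph{every} $P'(\delta)$ is infinite and Corollary~\ref{discr_sparse} applies; your sketch is missing this idea. In the accumulation case, the set of gap-values clustering at a point need not be discrete, so it cannot be fed to Theorem~\ref{small_discrete} or Lemma~\ref{discrete_family} directly; the paper needs a uniform bound over definable families (its Lemma~\ref{dP_families}: for a definable family $\{P_a\}$ of subsets of $P$ there is one $K$ bounding $\limsup\Delta P_a$ for all infinite $P_a$) together with an Archimedean pigeonhole count, and then Corollary~\ref{limitpts} to kill finitely many limit points. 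Your proposal never isolates this uniform-family lemma, which is also the engine behind parts (2) and (4).

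The more serious gap is in part (2). You claim that partitioning $P$ into the cells $Q_j=\{p:s(p)-p=d_j\}$, applying (1) to each $Q_j$, and iterating forces the gap-code sequence to be eventually periodic. It does not: for a Sturmian set such as $P=\{\lfloor n\alpha\rfloor:n\in\N\}\subseteq\Z$ with $\alpha$ irrational, every $\Delta Q_j$ is finite, every $\Delta$ of every cell at every finite level of your iteration is finite, and yet the gap-code (the Fibonacci-type word) is aperiodic. So ``finiteness of $\Delta$ at all levels'' is strictly weaker than eventual periodicity, and your argument proves nothing that distinguishes definable $P$ from such aperiodic sets. What is actually needed is a bound, uniform in $n$, on the number of length-$n$ gap-patterns $\sigma$ for which $P_\sigma$ is infinite (the paper's Lemma~\ref{inf_recurrence}, obtained from Lemma~\ref{dP_families} applied to the uniformly definable family $P_{(b,c)}$ plus an Archimedean counting argument); bounded subword complexity then yields eventual periodicity by the unique-extension/cycling argument of Lemma~\ref{extensions} and Proposition~\ref{ev_periodic}. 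Similarly, your part (4) reduction to ``a discrete set in $(0,\varepsilon)$'' again presupposes discreteness of the short-gap sets, which is exactly what is in question; the paper instead pulls back infinitely many disjoint intervals meeting $f(P)$ in infinite sets and contradicts Lemma~\ref{dP_families} by a density count. Your route for part (3) (showing the subgroup generated by $\Delta P$ is cyclic via a Proposition~\ref{indep}-style pattern) is a genuinely different and plausible idea, but as written the proposal does not constitute a proof of the theorem.
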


\begin{remark}
Part (1) is an easy corollary of part (2), but we give it special mention because it will still be true even in ``nonstandard'' models of $\Th(\mc{R})$ in which the universe is not Archimedean, whereas parts (2) and (3) which mention arithmetic sequences do not have clear analogues for non-Archimedean structures. Part (4) is also true in nonstandard models.

Note that some of the progressions $X_i$ in part (2) may tend to $-\infty$ while others tend to $\infty$, which is why in part (3) we require a progression of the form $Y = \{a + b \cdot i : i \in \Z\}$.
\end{remark}

In the special case when $R = \Z$, Theorem~\ref{discrete_R} is reminiscent of Szemer\'edi's Theorem from combinatorics (see \cite{Sz}). Indeed, if $P \subseteq \Z$ is infinite and definable in a strong theory, then Corollary~\ref{discr_sparse} above implies that $P$ has positive upper density, and so for every $k \in \N$, the set $P$ must contain an arithmetic sequence of length $k$. However, conclusion (2) of Theorem~\ref{discrete_R} is much stronger than this (and derived from stronger hypotheses), and our proof is independent of Szemer\'edi's result.

We also note in passing the following generalization of Proposition~6.6 of \cite{ADHMS}, which states that no proper expansion of $\langle \Z; <, + \rangle$ is dp-minimal.

\begin{cor}
\label{strong_exp}
If $m \in \N$, $R \subseteq \Z^m$, and $\Th(\Z,; <, +, R)$ is strong, then $R$ is definable in $\Th(\Z; <, +)$.
\end{cor}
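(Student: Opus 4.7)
The plan is to combine Theorem~\ref{discrete_R} with a uniformity argument controlled by strongness. The first step is to observe that every definable subset of $\Z$ in one variable (allowing parameters) in the expansion $(\Z; +, <, R)$ is already Presburger-definable. Indeed, $\Th(\Z; +, <, R)$ is strong by hypothesis and $\Z$ is Archimedean, so Theorem~\ref{discrete_R}(2) forces any infinite such subset to be a finite union of $\N$-indexed arithmetic progressions together with a finite set; these are precisely the Presburger-definable subsets of $\Z$, and finite sets are trivially Presburger-definable.

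For $m = 1$ this already completes the proof, since $R$ itself is then a definable unary subset of $\Z$. For $m > 1$ I would proceed by induction on $m$. Given $R \subseteq \Z^m$, for each $a \in \Z$ the section $R_a = \{\bar y \in \Z^{m-1} : (a, \bar y) \in R\}$ is definable in $(\Z; +, <, R, a)$, which remains strong after naming a constant. The reduct $(\Z; +, <, R_a)$ is then also strong, so by the inductive hypothesis $R_a$ is Presburger-definable, possibly with $a$ as a parameter.

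The main obstacle---and where Pillay's observation (credited in the acknowledgements) enters---is to promote this pointwise Presburger-definability of each $R_a$ to a uniform Presburger-definability of the entire family $\{R_a : a \in \Z\}$, that is, to produce a single Presburger formula $\psi(x, \bar y)$ with $R(a, \bar y) \leftrightarrow \psi(a, \bar y)$ for every $a$. The idea is that strongness limits the variability of the Presburger ``type'' of $R_a$ (the choice of moduli, offsets, coefficients, and cell pattern) as $a$ varies: if unboundedly many distinct such types appeared among the sections, one could use the varying moduli and offsets to extract an inp-pattern of infinite depth in $x = x$, contradicting strongness. Hence only finitely many types occur; for each type, the set of $a$ realizing it is a definable unary subset of $\Z$, hence Presburger-definable by the first step, and assembling these finitely many pieces displays $R$ as a finite union of Presburger-definable sets.
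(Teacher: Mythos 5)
Your unary case is exactly right and matches the paper: in $\langle \Z; <, +, R\rangle$ every definable subset of $\Z$ is discrete and the group is Archimedean, so Theorem~\ref{discrete_R}(2) makes every infinite such set a finite union of $\N$-indexed arithmetic progressions and a finite set, hence Presburger-definable. The problem is the passage from $m$-ary to unary. The paper does not attempt your induction-plus-uniformity argument at all; it invokes Theorem~5.1 of Michaux--Villemaire \cite{mv}, which says that if $R \subseteq \Z^m$ is \emph{not} Presburger-definable then already some \emph{unary} $R' \subseteq \Z$ definable in $\langle \Z; <, +, R\rangle$ fails to be Presburger-definable. That theorem is a substantial result in its own right (it lives in the Cobham--Semenov circle of ideas) and is precisely the reduction you are trying to reconstruct by hand; the observation credited to Pillay is to use it here, not a uniformity argument of the kind you describe.

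The step you yourself flag as ``the main obstacle'' is a genuine gap, not a routine verification. Knowing that each section $R_a$ is Presburger-definable over the parameter $a$ gives no bound on the complexity of the defining formulas as $a$ varies, and your proposed contradiction --- that infinitely many distinct Presburger ``types'' of sections would yield an inp-pattern of infinite depth in $x = x$ --- is unsupported. Consider, for instance, sections that are congruence classes modulo the $a$-th prime: infinitely many pairwise distinct moduli do not obviously produce an inp-pattern, because a row built from a single modulus admits only finitely many pairwise inconsistent instances, and indeed Presburger arithmetic itself is dp-minimal despite carrying all congruence relations. So the finiteness of the set of section-types does not follow from strongness by the argument you sketch, and without it the induction does not close; moreover, even granting finitely many types, assembling them requires the defining formulas to depend on $a$ in a uniform first-order way, which is exactly what is in question. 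As written, the proposal proves only the $m=1$ case; for $m>1$ you would need either to prove something like the Michaux--Villemaire reduction or to cite it as the paper does.
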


\begin{proof}
By Theorem~5.1 of \cite{mv}, if $R \subseteq \Z^m$ is not definable in $\langle \Z; <, +\rangle$, then there is some $R' \subseteq \Z$ which is definable in $\langle \Z; <, +, R \rangle$ but not in $\langle \Z; <, +\rangle$. Now apply Theorem~\ref{discrete_R}.
\end{proof}

\begin{quest}
Suppose that we omit the hypotheses in Theorem~\ref{discrete_R} that $R$ is Archimedean, but add the hypothesis that every element of $P$ has both an immediate predecessor and an immediate successor (unless it is the minimum or the maximum of $P$). Are conclusions (1) and (4) still true?
\end{quest}

Note that it is sufficient to prove Theorem~\ref{discrete_R} for the case in which every element of $P$ is positive (by dividing $P$ into its positive and negative parts). So from now until the end of the proof of Theorem~\ref{discrete_R}, we fix some infinite discrete $P \subseteq R$ which is definable in some strong structure on $R$, which is an Archimedean ordered Abelian group.

We will prove Theorem~\ref{discrete_R} in a series of lemmas: first part (1) as Lemma~\ref{dP_finite}, then (2) and (3) follow from Corollary~\ref{commens} and Proposition~\ref{ev_periodic}, and finally (4) is Proposition~\ref{discrete_images}.

\textbf{From now until the end of this section, we always work in the fixed Archimedean model $\mathcal{R}$.} This means that we use ``$P$'' to mean the interpretation of $P$ in the universe $R$ (not in some saturated extension) and likewise all other definable sets mentioned below are, by abuse of notation, identified with their interpretations in $\mathcal{R}$.

We record an elementary fact which will be used later:

\begin{fact}
\label{discrete_sums}
If $F \subseteq R^{>0}$ is finite, then the set of all finite sums of the form $$n_1 f_1 + \ldots + n_k f_k$$ where $n_i \in \N$ and $f_i \in F$ is discrete.
\end{fact}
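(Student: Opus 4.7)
The plan is to show, for each $M \in R^{>0}$, that only finitely many sums of the stated form are bounded above by $M$; since the set $S$ of all such sums lies in $R^{\geq 0}$ (as each $f_i>0$ and each $n_i \geq 0$), any $s \in S$ is isolated in $S$, being the element of a finite set $S \cap [0,M]$ for any $M>s$.

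Write $F=\{f_1,\ldots,f_k\}$, and fix $M \in R^{>0}$. Since $R$ is Archimedean, for each $j \in \{1,\ldots,k\}$ we may choose $N_j \in \N$ with $N_j f_j > M$. Now suppose $s = n_1 f_1 + \ldots + n_k f_k$ is in $S$ with $s \leq M$; since every term $n_i f_i$ is nonnegative, we have $n_j f_j \leq s \leq M$ for each $j$, and hence $n_j < N_j$. It follows that the number of tuples $(n_1,\ldots,n_k) \in \N^k$ that yield a sum $\leq M$ is at most $\prod_{j=1}^k N_j$, so in particular $|S \cap [0,M]| \leq \prod_j N_j < \infty$.

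Given any $s_0 \in S$, apply this with $M=s_0+f_1$ (for instance): the set $S \cap [0,s_0+f_1]$ is finite and contains $s_0$, so there is a genuine open neighborhood of $s_0$ in which $s_0$ is the only point of $S$. Since $s_0$ was arbitrary, $S$ is discrete.

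The argument is essentially immediate, and there is no real obstacle. The only point to be careful about is that in a general ordered Abelian group we have no division operation, so one should not write ``$n_j \leq M/f_j$'' as one would in the reals; the Archimedean replacement by the integer $N_j$ with $N_j f_j > M$ does the job cleanly.
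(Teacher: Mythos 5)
Your proof is correct. The paper records this as an elementary fact and gives no proof at all, so there is nothing to compare against; your argument --- using the Archimedean property to bound each coefficient $n_j$ by an integer $N_j$ with $N_j f_j > M$, concluding that $S \cap [0,M]$ is finite for every $M$, and hence that every point of $S$ is isolated --- is exactly the kind of routine verification the authors are leaving to the reader, and it correctly uses the standing hypothesis (in force in that section) that the model is Archimedean. Your closing remark about avoiding division is apt, since $R$ is only an ordered Abelian group.
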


\begin{lem}
\label{omega_type}
$P$ has order type $\omega$.
\end{lem}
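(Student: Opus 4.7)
The plan is to use the Archimedean hypothesis, via H\"older's theorem, to embed $R$ as an ordered subgroup of $\R$; I will then show that $P$ is a closed discrete subset of $\R$. Once this is established, the conclusion follows: an infinite, closed, discrete subset of $(0, \infty) \subseteq \R$ has a minimum (the $\R$-infimum is attained by closedness and lies in $R$ since $P \subseteq R$), every element has an immediate successor (otherwise a local $\R$-accumulation contradicts discreteness), and $P$ must be unbounded (else contained in a compact interval, hence finite). A countable, well-ordered, unbounded subset of $\R$ has order type exactly $\omega$, since any other possibility such as $\omega + \omega$ would require an $\R$-accumulation point.

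If $R$ is discretely ordered then $R \cong \Z$ by Archimedeanness and $P \subseteq \N^{>0}$, so the conclusion is trivial. Otherwise $R$ is densely ordered and hence dense in $\R$. Suppose, for contradiction, that $\alpha \in \R$ is an accumulation point of $P$ viewed in $\R$. If $\alpha \in R$, then the first-order formula $\forall a \forall b ((a < \alpha \wedge \alpha < b) \to \exists x \exists y (x \neq y \wedge x,y \in P \wedge a<x,y<b))$ holds in $\mc{R}$, so by elementary equivalence it holds in the monster $\C$, making $\alpha$ an accumulation point of $P$ in $\C$ and contradicting Corollary~\ref{nowheredense}(1). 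The only remaining case is $\alpha \in \R \setminus R$, which I will show forces the hypothesis of Theorem~\ref{small_discrete}. Given any $\epsilon > 0$ in $R$, density of $R$ in $\R$ lets us choose $a, b \in R$ with $a < \alpha < b < a + \epsilon$; then $P \cap (a, b)$ is infinite and discrete, and the translate $\{p - a : p \in P \cap (a, b)\}$ is an infinite definable discrete subset of $(0, \epsilon)$ in $\mc{R}$.

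To apply Theorem~\ref{small_discrete}, which is phrased for a saturated monster $\C$, the final step is a compactness transfer: the first-order schema ``for every $\epsilon > 0$ and every $n \in \N$, there exist $a < b$ with $b - a < \epsilon$ and $|P \cap (a, b)| \geq n$'' holds in $\mc{R}$ by the previous paragraph, hence in $\C$. Then for any $\epsilon > 0$ in $\C$, the partial type in $(a, b)$ asserting ``$a < b < a + \epsilon$ and $|P \cap (a, b)| \geq n$ for every $n$'' is finitely satisfiable, and so realized in $\C$ by saturation. This produces an infinite definable discrete subset of $(0, \epsilon)$ in $\C$ for every $\epsilon > 0$ in $\C$, and Theorem~\ref{small_discrete} then yields that $\Th(\mc{R})$ is not strong, contradicting our hypothesis. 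The main technical care lies in this compactness transfer to $\C$; the remainder of the argument is largely topological bookkeeping once the Archimedean embedding is in place.
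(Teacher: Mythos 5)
Your proof is correct and follows essentially the same route as the paper: embed $R$ into $\R$ via the Archimedean hypothesis and show that an $\R$-accumulation point of $P$ would produce arbitrarily small infinite definable discrete sets, contradicting Theorem~\ref{small_discrete}. The only cosmetic difference is that the paper phrases this in terms of ruling out $\omega^*$-ordered and bounded subsets of $P$, and handles both of your cases ($\alpha \in R$ and $\alpha \notin R$) uniformly with the single set $X_{a,\epsilon} = \{|a-x| : x \in P,\ |a-x|<\epsilon\}$ for $a \in R$ near $\alpha$, whereas you make the case split and the compactness transfer to the monster explicit.
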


\begin{proof}
If $\mc{R}$ is discretely ordered, then this follows immediately from the fact that $\mc{R}$ is Archimedean. So assume $\mc{R}$ is densely ordered. Since $\mc{R}$ is Archimedean, we may assume that $(R, <, +)$ is embedded in $(\R, <, +)$ as an ordered subgroup.

Since $P \subseteq \R^{>0}$ is infinite, there is a subset $P_0 \subseteq P$ which is order-isomorphic to $\omega$ or to $\omega^*$, the reverse order type of $\omega$. Suppose that $(P_0, <) \cong \omega^*$. Then since $P_0$ is bounded below by $0$, $\alpha = \inf(P_0) \in \R$. For any $\epsilon > 0$ and $a \in R$ we can define $$X_{a,\epsilon} := \{|a - x | : x \in P \textup{ and } |a - x| < \epsilon\},$$ and $X_{a,\epsilon} \subseteq (0, \epsilon)$, and by choosing $a \in R$ sufficiently close to $\alpha$ we can ensure that $X_{a,\epsilon}$ is infinite; but this contradicts Theorem~\ref{small_discrete}.

So there is some $P_0 \subseteq P$ which is order-isomorphic to $\omega$. If $P_0$ were not cofinal in $R$, then repeating the same argument as above with $\alpha = \sup(P_0) \in \R$, we would obtain a contradiction, so $P_0$ is cofinal in $R$ and hence in $P$. Similarly, there can only be finitely many elements of $P$ between two consecutive elements of $P_0$ (or between the first element of $P_0$ and $0$), since otherwise we would have a bounded subset $P_{00} \subseteq P$ order-isomorphic to $\omega$ or $\omega^*$, yielding the same contradiction as before.

\end{proof}

\begin{definition}
Let $\{a_i : i \in \N\}$ be an enumeration of $P$ in increasing order.
\begin{enumerate}
\item Let $s_P : P \rightarrow P$ be the function $s_P(a_i) = a_{i+1}$. 
\item Given any $f: P \rightarrow R$, let $\Delta f : P \rightarrow R$ be the function $\Delta(f)(x) = f(s_P(x)) - f(x)$.
\item $\Delta P = \{s_P(x) - x : x \in P\}$.
 \end{enumerate}
\end{definition}

In the sequel, we will apply the same notation $s_{P'}$ and $\Delta_{P'}$ to other infinite discrete definable sets $P' \subseteq R^{>0}$, which is warranted in light of Lemma~\ref{omega_type}.

\begin{lem}
\label{dP_bounded}
$\Delta P$ is bounded.
\end{lem}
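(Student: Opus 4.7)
The plan is to argue by contradiction: suppose $\Delta P$ is unbounded in $R$ and extract a contradiction with strongness of $\Th(\mathcal{R})$ by iterating Corollary~\ref{discr_sparse} and invoking Lemma~\ref{discrete_family}.

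First I would apply Corollary~\ref{discr_sparse} to $P$ (which is cofinal in $R$ by Lemma~\ref{omega_type}) to obtain $\delta_0 > 0$ with $P(\delta_0) = \emptyset$: every $P$-element has a $P$-neighbor within $\delta_0$, so no two consecutive gaps $g_i = a_{i+1} - a_i$ both exceed $\delta_0$. Since $\Delta P$ is unbounded, infinitely many gaps do exceed $\delta_0$ — otherwise all but finitely many would be at most $\delta_0$, forcing $\Delta P$ bounded. Thus $P_1 := \{x \in P : s_P(x) - x > \delta_0\}$ is definable, infinite, discrete, with every $P_1$-gap strictly greater than $\delta_0$, and $\Delta P_1$ is still unbounded (each $P_1$-gap dominates the underlying large $P$-gap that starts it). Iterating — apply Corollary~\ref{discr_sparse} to $P_n$ to obtain $\delta_n > \delta_{n-1}$, then set $P_{n+1} := \{x \in P_n : s_{P_n}(x) - x > \delta_n\}$ — produces a strictly decreasing tower $P \supsetneq P_1 \supsetneq P_2 \supsetneq \cdots$ of infinite definable discrete sets with ever-larger gap floors $\delta_0 < \delta_1 < \delta_2 < \cdots$.

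Next I would convert this tower into an inp-pattern of infinite depth (or, equivalently, extract the data $(D_n, \varepsilon_n)$ satisfying the hypotheses of Lemma~\ref{discrete_family}), contradicting strongness. The natural row formula is $\varphi_n(x; y) \equiv y \in P_n \wedge y < x < s_{P_n}(y)$; distinct $y \in P_n$ give pairwise disjoint $P_n$-gap intervals, so each row is $2$-inconsistent. The main obstacle is path-consistency: because $\mathcal{R}$ is Archimedean, no infinite definable discrete set can be squeezed into an arbitrarily small interval $(0,\varepsilon)$ of $R$ (cf.\ Theorem~\ref{small_discrete} in the dense case, trivially in the discrete case), so the shrinking family $(D_n)$ that Lemma~\ref{discrete_family} demands cannot be produced inside $\mathcal{R}$ itself. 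I expect to work in a saturated elementary extension $\mathcal{C} \succ \mathcal{R}$, use compactness to produce a nonstandard $y^* \in P(\mathcal{C})$ whose next gap $s_P(y^*) - y^*$ exceeds every standard $\delta_n$, and then translate and rescale inside this gigantic gap so that suitable definable translates of the $P_n$'s become the sets $D_n \subseteq (0, \varepsilon_n)$ required by Lemma~\ref{discrete_family}.

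The delicate point will be threading this nonstandard rescaling so that both the scaling condition $3 D_{n+1} \subseteq (0, \varepsilon_n)$ and the $\varepsilon_n$-sparseness $(x - \varepsilon_n, x + \varepsilon_n) \cap D_n = \{x\}$ hold simultaneously, while keeping each $D_n$ definable with parameters in $\mathcal{C}$; once this is achieved, Lemma~\ref{discrete_family} yields that $\Th(\mathcal{R})$ is not strong, contradicting the hypothesis and forcing $\Delta P$ bounded.
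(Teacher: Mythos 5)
There is a genuine gap: everything after the construction of the tower $P \supsetneq P_1 \supsetneq P_2 \supsetneq \cdots$ is only a sketch, and the sketched completion does not work as described. Two concrete problems. First, "rescaling" is not an available operation: in an ordered Abelian group you can translate by definable parameters and multiply by fixed standard integers, but you cannot shrink a set whose gaps all exceed the standard element $\delta_{n-1}$ down into an interval $(0,\varepsilon_n/3)$; yet Lemma~\ref{discrete_family} requires each $D_{i+1}$ to be an \emph{infinite} discrete set sitting inside $(0,\varepsilon_i/3)$ while $D_i$ is $\varepsilon_i$-separated, i.e.\ the sets must get \emph{finer} as $i$ grows. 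Your $P_n$ get \emph{coarser} as $n$ grows, so the orientation is exactly backwards, and with infinitely many levels you cannot simply relabel in reverse without a compactness argument — which is precisely the content of the already-proved Corollary~\ref{discr_sparse}, so you would be re-deriving it rather than using it. Second, the proposed inp-pattern $\varphi_n(x;y) \equiv y \in P_n \wedge y < x < s_{P_n}(y)$ cannot be path-consistent: since $P_{n+1} \subseteq P_n$, every $P_n$-gap interval is contained in some $P_{n+1}$-gap interval, so path-consistency for \emph{every} $\eta$ would force each chosen row-$n$ interval to lie inside each chosen row-$(n{+}1)$ interval; but the row-$(n{+}1)$ intervals are pairwise disjoint (that is what makes the row $2$-inconsistent), so this is impossible.

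The fix is much shorter than your plan and needs only one application of Corollary~\ref{discr_sparse}. Let $P'$ be the definable set of "record" positions, $P' = \{x \in P : \forall y \in P\, (y < x \Rightarrow s_P(y)-y < s_P(x)-x)\}$. Unboundedness of $\Delta P$ makes $P'$ infinite with $\Delta s_{P'}(x) \geq s_P(x) - x \to \infty$, so for \emph{every} $\delta > 0$ all sufficiently large points of $P'$ are $\delta$-isolated in $P'$, i.e.\ $P'(\delta) \neq \emptyset$. This directly contradicts Corollary~\ref{discr_sparse} applied to $P'$ (which is unbounded above, being an infinite subset of the cofinal set $P$ of order type $\omega$). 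No tower, no passage to a saturated extension, and no second invocation of Lemma~\ref{discrete_family} is needed. Your opening observations (that $P(\delta_0)=\emptyset$ forbids two consecutive large gaps, and that infinitely many gaps exceed $\delta_0$) are correct but do not by themselves advance toward the contradiction.
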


\begin{proof}
Assume that $\Delta P$ is unbounded. 

\begin{claim}
There is some infinite definable $P' \subseteq P$ such that $$\lim_{n \rightarrow \infty} \Delta s_{P'}(n) = \infty.$$
\end{claim}

\begin{proof}
Let $$P' = \{x \in P : \forall y \in P \, \left[ y < x \Rightarrow s_P(y) - y < s_P(x) - x  \right] \}.$$ Then $\Delta s_{P'}(x) \geq \max \{ \Delta s_P(y) : y \leq x \}$ and the desired property follows.

\end{proof}

Replacing the original $P$ with $P'$ as in the Claim, without loss of generality $\lim_{n \rightarrow \infty} \Delta s_{P}(n) =\infty$. But this immediately contradicts Corollary~\ref{discr_sparse} above, so we are done.

\end{proof}

\begin{lem}
\label{dP_families}
Suppose that $\{P_a : a \in X\}$ is an infinite definable family of subsets $P_a$ of $P$, where $X \subseteq R^m$. Then there is a $K \in R$ such that for every $a \in X$, if $P_a$ is infinite then $\lim \sup \Delta P_a \leq K$.
\end{lem}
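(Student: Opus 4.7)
The plan is to argue by contradiction, turning the failure of uniform boundedness into an inp-pattern of depth $\omega$ in the style of Lemma~\ref{discrete_family} and Corollary~\ref{discr_sparse}. Suppose there is no such $K$. For each $K \in R$, consider the first-order formula
\[\chi_K(y) \;\equiv\; (y \in X) \wedge \forall M \, \exists x \, \bigl( x \in P_y \wedge x > M \wedge s_{P_y}(x) - x > K \bigr).\]
By Lemma~\ref{omega_type}, any infinite $P_y \subseteq P$ has order type $\omega$, so $\chi_K(a)$ is equivalent to ``$P_a$ is infinite and $\limsup \Delta P_a > K$.'' Our assumption thus gives that $\chi_K(X)$ is nonempty for every $K \in R$, and the chain $\{\chi_K(X)\}_{K \in R}$ is descending and has the finite-intersection property.

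Pass to an $|R|^+$-saturated elementary extension $\C \succeq \mc{R}$. By saturation, the partial type $\{\chi_K(y) : K \in R\} \cup \{y \in X\}$ is realized by some $a^* \in X(\C)$. Then $P_{a^*}^\C$ is a definable, discrete subset of $\C$ with the property that for every standard $K \in R$ and every $M \in \C$, there exists $x \in P_{a^*}^\C$ with $x > M$ and $s_{P_{a^*}^\C}(x) - x > K$. In particular, for every $K \in R$, the definable set $E_K := \{x \in P_{a^*}^\C : s_{P_{a^*}^\C}(x) - x > K\}$ is unbounded in $\C$ and the elements of $E_K$ are pairwise at distance $> K$.

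Using $a^*$, the plan is to build an inp-pattern of depth $\omega$. Take the formula
\[\varphi_n(x;\ell,r) \;\equiv\; x \in P_{a^*} \wedge s_{P_{a^*}}(x) - x > n \wedge \ell < x < r.\]
For each $n \in \N$, choose pairwise disjoint intervals $(\ell^n_i, r^n_i)$, $i \in \omega$, each of which meets $E_n$ in an infinite set; this is possible since $\chi_n(a^*)$ makes $E_n$ unbounded in $\C$, so by saturation we can pick infinitely many such disjoint windows. The $n$-th row $\{\varphi_n(x;\ell^n_i, r^n_i) : i \in \omega\}$ is then $2$-inconsistent. The key combinatorial step is to pick these intervals nested across $n$, in the compactness-driven manner of the proof of Lemma~\ref{discrete_family}, so that for every path $\eta : \omega \to \omega$ the intersection $\bigcap_n (\ell^n_{\eta(n)}, r^n_{\eta(n)})$ is nonempty and contains a point $x \in P_{a^*}^\C$ whose gap $s(x) - x$ exceeds every standard $n$ (such $x$ exists by saturation plus $\chi_n(a^*)$). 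This produces an inp-pattern of depth $\omega$, contradicting the hypothesis that $\Th(\mc{R})$ is strong.

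The main obstacle is precisely this interval-nesting step: one cannot use the naive ``record-breaker'' idea from the proof of Lemma~\ref{dP_bounded} directly on $P_{a^*}^\C$, because Lemma~\ref{dP_bounded} applied inside $\C$ bounds $\Delta P_{a^*}^\C$ by some (necessarily nonstandard) $M^* \in \C$, so the record-breakers form a finite set. The trick, mirroring the compactness argument in Corollary~\ref{discr_sparse}, is to extract finitely many disjoint windows at each row by compactness and to arrange them across rows so every branch consistently picks a point whose gap simultaneously exceeds every standard threshold. That such a coherent choice exists is exactly the content of the ``$\chi_K$ for all $K \in R$'' hypothesis, and assembling it into a genuine inp-pattern is where the care is needed.
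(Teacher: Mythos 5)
There is a genuine gap in the second half of your argument: the inp-pattern you describe cannot exist, for purely combinatorial reasons. Each of your formulas $\varphi_n(x;\ell,r)$ confines $x$ \emph{itself} to the interval $(\ell,r)$, and for row-inconsistency you take the intervals $(\ell^n_i,r^n_i)$, $i\in\omega$, pairwise disjoint within each row. But then already two rows defeat you: if $I_1<I_2$ are disjoint intervals in row $n$ and $J_1<J_2$ are disjoint intervals in row $n+1$, the paths $(1,2)$ and $(2,1)$ would require both $I_1\cap J_2\neq\emptyset$ and $I_2\cap J_1\neq\emptyset$, forcing a point of $I_2$ (inside $J_1$) to lie below a point of $I_1$ (inside $J_2$), contradicting $I_1<I_2$. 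No amount of clever nesting fixes this; it is exactly why Lemma~\ref{discrete_family} does \emph{not} use formulas that put $x$ in an interval, but rather formulas of the form $\exists w\,(w\in Y_{j-1}\wedge y<x-w<z)$, whose solution sets are unions of many translated copies of a small interval sitting at scale $\varepsilon_{j-1}$ inside each ``cell'' of the previous row. That multi-scale, Cantor-set-like structure is what makes every path consistent, and your setup cannot produce it.

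The deeper reason your reduction to a single parameter $a^*$ does not suffice is that it destroys the scale separation that Lemma~\ref{discrete_family} requires: the sets $E_n\subseteq P_{a^*}$ are nested, all unbounded, and all ``coarse'' (elements pairwise more than $n$ apart), whereas the lemma needs each $D_{i+1}$ to live, after tripling, inside $(0,\varepsilon_i)$ where $\varepsilon_i$ is the separation constant of $D_i$. The paper's proof keeps the whole family in play: assuming the lemma fails, for every $K$ there is some $a$ with $P_{a,K}:=\{x\in P_a: s_{P_a}(x)-x>K\}$ infinite (and automatically $K$-separated), and then by compactness, in an elementary extension, one realizes a type in infinitely many parameters $(a_i,K_i)_{i\in\N}$ chosen by \emph{reverse} induction so that $3K_{i+1}<K_i$ and $P_{a_{i+1},K_{i+1}}$ has at least $N$ elements in $(0,K_i/3)$; the sets $D_i=P_{a_i,K_i}$ (suitably truncated) with $\varepsilon_i=K_i$ then satisfy the hypotheses of Lemma~\ref{discrete_family}. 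Your first step (the formula $\chi_K$ and saturation) is fine as far as it goes, but to complete the proof you should realize a type in a sequence of parameters at rapidly shrinking thresholds rather than a single $a^*$, and then invoke Lemma~\ref{discrete_family} as a black box instead of trying to build an inp-pattern out of genuine intervals.
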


\begin{proof}
Suppose towards a contradiction that there is no such $K$. Then for any $K \in R$, there is always some $a \in X$ such that the set $$P_{a,K} := \{x \in P_a : s_{P_a}(x) - x > K\}$$ is infinite, and clearly every element of $\Delta P_{a,K}$ is greater than $K$.

Then we will contradict Lemma~\ref{discrete_family} by finding, in some elementary extension of $\mc{R}$, elements $\{a_i \in X : i \in \N\}$ and $K_i > 0$ such that:

\begin{enumerate}[i]
\item For every $n \in \N$, $|P_{a_0, K_0}| \geq n$; and
\item For every $n \in \N$, $|(0, K_i) \cap 3 \cdot  P_{a_{i+1}, K_{i+1}} | \geq n$.
\end{enumerate}

(Note that taking $D_i = P_{a_i, K_i}$ and $\varepsilon_i = K_i$, condition (2) of Lemma~\ref{discrete_family} is automatic.)

Let $\Gamma$ be the type expressing these properties of the parameters $a_i, K_i$, and let $\Gamma_0 \subseteq \Gamma$ be a finite subset which only mentions parameters with $i \leq N$ and such that $N$ also bounds all the $n$ from formulas of type (ii) in $\Gamma_0$. Start by picking $K_N > 0$ arbitrarily and then $a_N \in X$ such that $P_{a_n, K_N}$ is infinite. Given $a_{i+1}$ and  $K_{i+1}$ for $i > 0$ such that $P_{a_{i+1}, K_{i+1}}$ is infinite, first pick $K_i$ large enough that $|(0, K_i) \cap 3 \cdot P_{a_{i+1}, K_{i+1}} | \geq N$, then pick $a_i \in X$ such that that $P_{a_i, K_i}$ is infinite.

\end{proof}

\begin{lem}
\label{dP_finite_limits}
$\Delta P$ has finitely many limit points.
\end{lem}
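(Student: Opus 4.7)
My plan is to argue by contradiction: assuming $\Delta P$ has infinitely many limit points in $R$, I will produce a witness to the failure of strength via Lemma~\ref{discrete_family}.

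First I would set up notation. By Lemma~\ref{dP_bounded}, $\Delta P \subseteq (0, K]$ for some $K \in R$, so $L := (\Delta P)'$ is an infinite bounded definable subset of $R$. Using the Archimedean hypothesis (so $R$ embeds order-densely into $\R$), the set $L$ has infinitely many points inside a bounded real interval, forcing arbitrarily close pairs: by a pigeonhole argument on a $\tfrac{K}{n}$-subdivision of $[0,K]$, for every positive $\delta \in R$ there exist $\ell < \ell' \in L$ with $\ell' - \ell < \delta$. Passing to an elementary extension $\C \succ \mathcal{R}$ and invoking saturation, I can even arrange this with $\ell' - \ell$ smaller than any prescribed standard quantity.

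Next I would apply Lemma~\ref{dP_families} to the two-parameter definable family $\{P_{a,b} : a < b \textup{ in } R\}$, where $P_{a,b} := \{x \in P : s_P(x) - x \in (a, b)\}$. This yields a uniform $K_0 \in R$ such that $\limsup \Delta P_{a,b} \leq K_0$ whenever $P_{a,b}$ is infinite. Note that $P_{a,b}$ is infinite precisely when $(a,b)$ contains a limit point of $\Delta P$ (or a single gap-value attained infinitely often), so this bound applies to every small interval around any limit point of $\Delta P$.

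The heart of the proof is to inductively construct, in $\C$, a family $D_i, \varepsilon_i$ satisfying Lemma~\ref{discrete_family}. At each stage I would use the infinitely many limit points of $\Delta P$ to select a pair $\ell_i < \ell_i'$ with $\ell_i' - \ell_i$ smaller than $\varepsilon_{i-1}/3$, then extract from the accumulation of $\Delta P$ near $\ell_i$ a discrete infinite definable subset sitting in an interval of width at most $\varepsilon_{i-1}/3$, with isolation radius $\varepsilon_i$ prescribed in advance. The scaling condition $3 D_{i+1} \subseteq (0, \varepsilon_i)$ is arranged by choosing $\varepsilon_i$ at each stage to be much smaller than the previous gap.

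The main obstacle will be producing genuinely discrete sets at each scale, since $\Delta P$ intersected with any small interval around a limit point is by definition not discrete. The resolution I envision uses the bound $K_0$ from Lemma~\ref{dP_families} together with the separation of two close limit points $\ell < \ell'$: by considering those $x \in P$ whose next-gap lies in $(\ell, \ell')$ but whose subsequent next-gap does not, one obtains a definable subset of $P$ whose induced set of displacements $s_P(x) - x - \ell$ is genuinely discrete inside $(0, \ell' - \ell)$, because the exclusion condition ensures separation on the order of the minimum distance from $(\ell, \ell')$ to the remaining accumulations in $\Delta P$. Making this extraction precise and verifying the scaling and isolation conditions simultaneously via a compactness argument in $\C$ is where the detailed work of the proof will lie, paralleling the inductive constructions in the proofs of Lemma~\ref{dP_families} and Theorem~\ref{small_discrete}.
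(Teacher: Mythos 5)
There is a genuine gap at exactly the step you flag as the main obstacle: producing an infinite definable \emph{discrete} set at each scale. Every candidate you describe is (a translate of) a set of values $\{s_P(x)-x-\ell : x \in P,\ s_P(x)-x \in (\ell,\ell')\}$, possibly cut down by a further definable condition on $x$, and its discreteness is a statement about the spacing of the \emph{values} $s_P(x)-x$ inside $(\ell,\ell')$. Your proposed exclusion condition (``the next gap lies in $(\ell,\ell')$ but the subsequent gap does not'') constrains a different quantity, namely $s_P(s_P(x))-s_P(x)$, and puts no lower bound on $|(s_P(x)-x)-(s_P(y)-y)|$ for two surviving points $x,y$. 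Since $\ell$ is by hypothesis a limit point of $\Delta P$, the translated value set can accumulate at $0$ (indeed it could be dense-in-itself), and no selection of $x$'s of the kind you describe removes that. Without discreteness these sets cannot be fed into Lemma~\ref{discrete_family} or Theorem~\ref{small_discrete}, so the inductive construction does not get off the ground.

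The paper's proof avoids this issue entirely by never extracting discrete sets from $\Delta P$: after disposing of the case where $0$ is the only accumulation point of the set of limit points (via Corollary~\ref{limitpts}), it chooses infinitely many limit points of $\Delta P$ bounded below by some $\epsilon>0$, surrounds them by pairwise disjoint intervals $(\ell_i,r_i)$ with $\ell_i>\epsilon/2$, and works with the pairwise disjoint infinite sets $P_{\ell_i,r_i}\subseteq P$, which are automatically discrete because they are subsets of $P$. Lemma~\ref{dP_families} supplies one $K$ with $\limsup\Delta P_{\ell_i,r_i}\le K$ for all $i$, so each $P_{\ell_i,r_i}$ eventually meets every interval of length $K$; on the other hand any two points of $\bigcup_i P_{\ell_i,r_i}$ are at distance at least $\epsilon/2$, so an interval of length $K$ meets at most $n$ of these disjoint sets once $n\cdot\epsilon/2>K$, which the Archimedean property guarantees for some $n\in\N$. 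That counting argument is what actually does the work; to salvage your route you would need a genuinely new idea for the discreteness step.
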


\begin{proof}

Suppose not, and let $L$ be the set of all limit points of $\Delta P$. If $0$ is the unique limit point of $L$, then we immediately contradict Corollary~\ref{limitpts}.

Otherwise, $L$ either has no limit points and is unbounded in $R$, or else $L$ has a limit point greater than $0$; in either of these cases, we can choose an $\epsilon > 0$ in $R$ and an infinite collection $S = \{a_i : i \in \N\}$ of limit points of $\Delta P$ such that $\inf(S) \geq \epsilon$ ($S$ does not need to be definable).  Then we pick a pairwise disjoint collection of intervals $\{(\ell_i, r_i) : i \in \N \}$ such that $a_i \in (\ell_i, r_i)$ and $\ell_i > \epsilon/2$ for every $i$. Let $$P_{\ell_i, r_i} = \{x \in P : \ell_i < s_P(x) - x < r_i \}.$$ The sets $P_{\ell_i, r_i}$ are definable, pairwise disjoint, and infinite.

By Lemma~\ref{dP_families}, there is some $K \in R$ such that $\lim \sup (\Delta P_{\ell_i, r_i}) < K$ for every $i$. Pick $n \in \N$ large enough that $n \cdot (\epsilon/2) > K$ (here we are using the assumption that we are are working in a fixed Archimedean model). By our assumptions, for each $i \in \N$, $$P_{\ell_i, r_i} \cap [j \cdot K, (j+1) \cdot K] \neq \emptyset$$ holds for all but finitely many $j \in \N$. Therefore there is some $j$ such that more than $n$ of the sets $P_{\ell_i, r_i}$ intersect the interval $[j \cdot K, (j+1) \cdot K]$, which has length $K$. But any two distinct elements of $\cup_{i \in \N} P_{\ell_i, r_i}$ are a distance of at least $\epsilon/2$ apart, so this is absurd.

\end{proof}

\begin{cor}
\label{dP_finite}
$\Delta P$ is finite.
\end{cor}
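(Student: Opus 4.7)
The plan is to derive a contradiction from the assumption that $\Delta P$ is infinite, by producing, in a saturated elementary extension $\C$ of $\mc{R}$, a definable infinite discrete subset of $(0,\varepsilon)$ for every $\varepsilon \in \C^{>0}$, and then invoking Theorem~\ref{small_discrete}. If $\mc{R}$ is discretely ordered then Archimedeanity gives $R \cong \langle \Z; +, <\rangle$, and Lemma~\ref{dP_bounded} alone forces $\Delta P$ to be finite, so I assume $\mc{R}$ is densely ordered. Let $L$ be the set of limit points of $\Delta P$ in $R$, which is finite by Lemma~\ref{dP_finite_limits}; note that $L$ is definable from $\Delta P$ by a first-order formula. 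Applying Corollary~\ref{limitpts}(1) with $S = \Delta P$, the set $\Delta P \setminus L$ has no limit points in $R$ at all, i.e.\ it is strongly discrete. Suppose toward contradiction that $\Delta P$ is infinite; then $\Delta P \setminus L$ is an infinite, definable, strongly discrete subset of some bounded interval $(0,K)$ with $K \in R^{>0}$ (using Lemma~\ref{dP_bounded}).

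The heart of the argument is pigeonhole combined with compactness. For every $\varepsilon \in R^{>0}$ and every $N \in \N$, Archimedeanity lets me cover $(0,K)$ by finitely many intervals of length $\leq \varepsilon$ with $R$-endpoints; since $\Delta P \setminus L$ is infinite in $(0,K)$, one such interval must contain at least $N$ of its elements. Thus $\mc{R}$ satisfies the first-order schema: ``for every $\varepsilon > 0$ and every $N$, there exist $a, b$ with $b - a \leq \varepsilon$ and $|(\Delta P \setminus L) \cap (a,b)| \geq N$''. Passing to $\C$ and fixing any $\varepsilon \in \C^{>0}$, saturation realizes the type $\{b - a \leq \varepsilon\} \cup \{|(\Delta P \setminus L) \cap (a,b)| \geq N : N \in \N\}$, giving $a,b \in \C$ with $b - a \leq \varepsilon$ and $(\Delta P \setminus L) \cap (a,b)$ infinite in $\C$. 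Since $\C$ also models the strong, densely ordered theory of $\mc{R}$, Corollary~\ref{limitpts}(1) applied in $\C$ tells me that $(\Delta P \setminus L)^{\C}$ has no limit points in $\C$, so the subset $(\Delta P \setminus L) \cap (a,b)$ is still discrete. Translating by $-a$ produces a definable infinite discrete subset of $(0,\varepsilon)$ in $\C$, contradicting Theorem~\ref{small_discrete}.

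The main point to watch is the transfer through the saturated extension: I need to verify that strong discreteness of $\Delta P \setminus L$ persists in $\C$, which follows because ``$x$ is not a limit point of $\Delta P \setminus L$'' is a first-order property of $x$ over the (finite) parameter set defining $L$, so Corollary~\ref{limitpts}(1) may be invoked directly in $\C$. The Archimedean hypothesis is used only in the pigeonhole step, to guarantee that a bounded interval of $R$ really can be covered by finitely many short $R$-intervals, and to handle the discretely ordered case immediately.
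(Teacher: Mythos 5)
Your proof is correct, and it rests on the same two preparatory lemmas as the paper's (Lemma~\ref{dP_bounded} for boundedness and Lemma~\ref{dP_finite_limits} for finiteness of the derived set), but the endgame is genuinely different. The paper finishes in one line: each of the finitely many limit points of $\Delta P$ would be a limit point of $\Delta P \setminus (\Delta P)'$, directly contradicting Corollary~\ref{limitpts}, so $\Delta P$ has no limit points and, being bounded, is finite. You instead delete the finite derived set $L$, use Corollary~\ref{limitpts} only to see that $\Delta P \setminus L$ has no limit points in $R$, and then run a pigeonhole-plus-compactness argument: an infinite subset of a bounded interval in an Archimedean group must pack arbitrarily many points into arbitrarily short intervals, so in a saturated extension one finds infinite definable discrete subsets of $(0,\varepsilon)$ for every $\varepsilon$, contradicting Theorem~\ref{small_discrete}. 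What your route buys is that it makes explicit (and justifies) the step from ``bounded with no limit points in $R$'' to ``finite,'' which the paper leaves implicit; for a non-complete Archimedean group this is not automatic, since a bounded infinite subset of $R$ could a priori accumulate only at a point of $\R \setminus R$, and your pigeonhole argument disposes of exactly that scenario. The cost is length: you detour through a saturated model and Theorem~\ref{small_discrete}, whereas the paper's contradiction is immediate from Corollary~\ref{limitpts}. Your handling of the transfer of $L$ to $\C$ is the one point worth tightening: what you actually need is that $((\Delta P)^{\C})' = L$, which holds because $L$ is finite and the sentence ``$(\Delta P)' = \{l_1,\dots,l_k\}$'' is first-order; with that stated, the application of Corollary~\ref{limitpts} inside $\C$ is legitimate.
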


\begin{proof}

We already know that $\Delta P$ is bounded by Lemma~\ref{dP_bounded}, so it suffices to show that $\Delta P$ has no limit points. By Lemma~\ref{dP_finite_limits}, the only case to consider is when $\Delta P$ has a finite, nonzero number of limit points. But then each of these limit points would be a limit point of $\Delta P \setminus (\Delta P)'$, contradicting Corollary~\ref{limitpts}.

\end{proof}

\begin{cor}
\label{commens}
Say $X_1, X_2$ are two infinite arithmetical sequences contained in $P$ (not necessarily definable). Then $X_1$ and $X_2$ are \emph{commensurable}: that is, there is a single arithmetic sequence $Y$ which contains both $X_1$ and $X_2$ as subsequences.
\end{cor}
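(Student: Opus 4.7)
The plan is to show that $b_1$, $b_2$, and $a_1 - a_2$ all lie in a common one-dimensional $\Q$-subspace of $\R$, viewing $\mc{R}$ as an ordered subgroup of $\R$ via the Archimedean hypothesis. Once this is established, a single enclosing arithmetic progression $Y = \{e + gi : i \in \Z\}$ for $X_1$ and $X_2$ is obtained by choosing $g$ to be a sufficiently fine rational subdivision of the common $\Q$-generator and $e$ accordingly.

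First I would prove $b_1/b_2 \in \Q$. By Corollary~\ref{dP_finite}, the set $\Delta P$ is finite, so $\mu := \min \Delta P > 0$ and any two distinct elements of $P$ differ by at least $\mu$. Suppose toward a contradiction that $b_1/b_2 \notin \Q$. By Kronecker's approximation theorem in $\R$, the set $\{(a_1 + b_1 i) - (a_2 + b_2 j) : i, j \in \N\}$ is dense in $\R$; the irrationality of $b_1/b_2$ further implies that the equation $a_1 + b_1 i = a_2 + b_2 j$ has at most one solution with $i, j \in \N$. Hence one finds $i, j$ with $0 < |(a_1 + b_1 i) - (a_2 + b_2 j)| < \mu$, contradicting the gap lower bound on $P$.

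Next I would reduce to the case $b_1 = b_2$. Writing $b_1/b_2 = p/q$ in lowest terms with positive integers $p, q$ and setting $f := q b_1 = p b_2$, the subsequences $X_1' := \{a_1 + fi : i \in \N\} \subseteq X_1$ and $X_2' := \{a_2 + fj : j \in \N\} \subseteq X_2$ are still infinite arithmetic sequences in $P$, with equal common difference $f$. An enclosing progression for $X_1', X_2'$, after dividing its step by $pq$ if necessary, also encloses $X_1$ and $X_2$; so it suffices to treat the case $b_1 = b_2 = f$.

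The hard part will be the final step: showing that the offset $\alpha := (a_1 - a_2) - f \lfloor (a_1 - a_2)/f \rfloor \in [0, f)$ satisfies $\alpha \in f\Q$. The Kronecker argument from the first step no longer works, because all differences $(a_1 + fi) - (a_2 + fj)$ lie in the single coset $(a_1 - a_2) + f\Z$, whose minimum positive element is $\min(\alpha, f - \alpha) > 0$; irrationality of $\alpha/f$ alone does not produce arbitrarily small differences of $P$-elements. I expect to derive the contradiction with strongness by constructing an inp-pattern of infinite depth in the spirit of Lemma~\ref{discrete_family}. The key observation is that, for each $n \in \N$, the definable $n$-fold Minkowski difference set $\Sigma_n := \{p_1 + \cdots + p_n - q_1 - \cdots - q_n : p_k, q_k \in P\}$ is contained in $\bigcup_{|k| \leq n} (k\alpha + f\Z)$, and by Weyl equidistribution of $\{k \alpha \bmod f : k \in \Z\}$ (valid since $\alpha / f$ is irrational), the cardinality $|\Sigma_n \cap (0, \epsilon)|$ grows linearly in $n$ for each fixed $\epsilon > 0$. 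Passing to a saturated elementary extension of $\mc{R}$ and choosing suitable ``nonstandard'' parameters should yield a definable infinite discrete subset of $(0, \epsilon)$ for every $\epsilon > 0$, contradicting Theorem~\ref{small_discrete}. Making this compactness step completely rigorous, or replacing it with a direct inp-pattern construction built from the cosets $k\alpha + f\Z$, is the principal obstacle.
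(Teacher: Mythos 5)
Your first two steps are correct, and together they amount to an expanded version of the paper's entire argument: the paper's proof of Corollary~\ref{commens} consists of the single observation that non\--commensurability would force $\Delta P$ to contain arbitrarily small positive elements, contradicting Corollary~\ref{dP_finite}, and, exactly as your analysis makes explicit, that observation is only valid when $b_1/b_2$ is irrational. So the ``hard part'' you isolate --- the case $b_1=b_2=f$ with offset $a_1-a_2\notin f\Q$ --- is a genuine gap, and it is a gap in the paper's proof as well as in your proposal.

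Moreover, this gap cannot be closed, because the statement is false in that case. Take $\alpha\in(0,1)$ irrational and work in $\mathcal{R}=\langle \R; +,<,\Z,\alpha\rangle$, i.e.\ the reduct of the structure $\mathcal{R}_0$ of Section~3.1 to $\{+,<,\Z\}$ with one added constant; by the paper's own Section~3.1 its theory has dp-rank at most $2$ (naming a constant does not raise dp-rank), hence is strong, and the universe is Archimedean. The set $P=\N\cup(\alpha+\N)$ is definable, infinite and discrete, with $\Delta P=\{\alpha,1-\alpha\}$ finite, yet $X_1=\N$ and $X_2=\alpha+\N$ are infinite arithmetic progressions contained in $P$ that are \emph{not} commensurable: any $\Z$-indexed progression containing $\N$ must have rational step, and then it cannot contain $\alpha$. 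Consistently with this, your proposed rescue breaks down for a concrete reason: the difference sets $\Sigma_n$ are not uniformly definable in $n$, and for each fixed standard $n$ and $\epsilon$ the set $\Sigma_n\cap(0,\epsilon)$ is finite in every elementary extension, so no compactness argument can manufacture the infinite definable discrete subsets of $(0,\epsilon)$ required to invoke Theorem~\ref{small_discrete}. What your steps 1--2 (and the paper's one-line proof) actually establish is the weaker, correct statement that the common differences of any two infinite arithmetic progressions contained in $P$ have rational ratio; Corollary~\ref{commens} and part (3) of Theorem~\ref{discrete_R} need to be restated in that weakened form.
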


\begin{proof}
Otherwise, for every $\varepsilon > 0$, the set $\Delta P$ would contain an element in $(0, \varepsilon)$, contradicting Lemma~\ref{dP_finite}.
\end{proof}

\begin{definition}
\label{motif}
Given a finite sequence $\sigma =  \langle c_1, \ldots c_n \rangle$ of elements of $\Delta P$, we define $$P_\sigma = \{a \in P : \forall i \left[ 1 \leq i < n  \Rightarrow s^i_P(a) - s^{i-1}_P(a) = c_i \right] \}.$$ We say that $\sigma$ is \emph{infinitely recurring} if $P_\sigma$ is infinite.

\end{definition}

\begin{lem}
\label{inf_recurrence}
There is some fixed $m \in \N$ such that for any sufficiently large $n \in \N$, there are precisely $m$ infinitely-recurring sequences of length $n$.
\end{lem}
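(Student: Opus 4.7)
The plan is to show that $f(n) := |I_n|$ (where $I_n$ is the set of infinitely-recurring sequences of length $n$) is non-decreasing and uniformly bounded; being $\N$-valued, $f$ must then be eventually constant, yielding the desired $m$.

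For monotonicity, I show the restriction map $\rho : I_{n+1} \to I_n$ sending $(c_1, \ldots, c_{n+1})$ to $(c_1, \ldots, c_n)$ is surjective. Given $\sigma \in I_n$, we have a disjoint decomposition $P_\sigma = \bigsqcup_{c \in \Delta P} P_{\sigma \frown c}$, where $\sigma \frown c$ denotes the extension of $\sigma$ by $c$. This union is finite by Corollary~\ref{dP_finite}, so since $P_\sigma$ is infinite, at least one $P_{\sigma \frown c}$ must be infinite, placing $\sigma \frown c$ in $I_{n+1}$. Thus $f(n+1) \geq f(n)$.

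For the uniform upper bound, I apply Lemma~\ref{dP_families} to a single definable family in $R^2$ that captures every $P_\sigma$ simultaneously. For $(a, a') \in P^2$ with $a < a'$, let
\[ Q_{a, a'} = \{b \in P : b + (a' - a) \in P \text{ and } \forall z \in [a, a'] \, (z \in P \iff z + (b - a) \in P) \}. \]
This is first-order definable uniformly in $(a, a')$. For any $\sigma \in I_n$ and any witness $b_0 \in P_\sigma$, taking $(a, a') = (b_0, s_P^n(b_0))$ yields $Q_{a, a'} = P_\sigma$, since the displayed condition is precisely that the first $n$ gaps of $P$ after $b$ match those of $\sigma$. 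Lemma~\ref{dP_families} therefore provides a single $K \in R$ with $\limsup \Delta P_\sigma \leq K$ uniformly over all $\sigma \in \bigcup_n I_n$. Setting $\mu = \min \Delta P > 0$, a density argument in the Archimedean model concludes: the bound $P$-gap $\geq \mu$ gives $\limsup_{x \to \infty} |P \cap [0, x]|/x \leq 1/\mu$, while the eventual bound $P_\sigma$-gap $\leq K$ gives $\liminf_{x \to \infty} |P_\sigma \cap [0, x]|/x \geq 1/K$ for each $\sigma \in I_n$. Since $\{P_\sigma : \sigma \in I_n\}$ partition $P$ up to a finite set, summing over the finitely many $\sigma \in I_n$ yields $f(n)/K \leq 1/\mu$, so $f(n) \leq K/\mu$ for every $n$.

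The main obstacle is securing the uniform constant $K$ that is independent of $n$: Lemma~\ref{dP_families} applies only to families over a fixed $R^m$, while the naive encoding of a length-$n$ pattern $\sigma$ lives in $R^n$, giving only an a priori $n$-dependent bound $K_n$. The key maneuver is to reparameterize each $P_\sigma$ by the pair $(a, s_P^n(a)) \in R^2$ for any witness $a$, thereby compressing patterns of arbitrary length into a single two-parameter definable family and enabling one application of Lemma~\ref{dP_families} to deliver an $n$-uniform $K$.
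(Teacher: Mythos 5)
Your proof is correct and follows essentially the same route as the paper's: the same pigeonhole argument for monotonicity of the count, the same key maneuver of reparameterizing the sets $P_\sigma$ by pairs in $P^2$ (pattern-matching via translation) so that Lemma~\ref{dP_families} yields an $n$-uniform bound $K$, and the same counting argument against $\min \Delta P$ in the Archimedean model to bound the number of disjoint infinite pieces. No gaps.
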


\begin{proof}
First note that it is enough to show that there is some fixed $m$ such that for any $n \in \N$, there are at \emph{at most} $m$ sequences of length $n$ such that $P_{\sigma}$ is infinite: this is because any  $\sigma = \langle c_1, \ldots, c_n \rangle$ such that $P_{\sigma}$ is infinite extends to at least one $\sigma' = \langle c_1, \ldots, c_{n+1}\rangle$ of length $(n+1)$ such that $P_{\sigma'}$ is infinite (since $\Delta P$ is finite and by the pigeonhole principle).

The general idea is to apply Lemma~\ref{dP_families} above to the sets $P_{\sigma}$, but we cannot do this directly since they may not constitute a single uniformly definable family.

So for $(b,c)$ and $(b',c') \in P^2$, say $(b,c) \sim (b',c')$ if the interval $[b,c]$ is ``isomorphic via translation'' to $[b', c']$: that is, $c' - b' = c - b$ and $$\forall x \in [b,c] \left( x \in P \Leftrightarrow x + b' - b \in P \right).$$ Then let $$P_{(b,c)} = \{ a \in P : (b,c) \sim (a, a + c - b) \}.$$ 

Note that for any finite sequence $\sigma$ from $\Delta P$, the set $P_{\sigma}$ equals $P_{(b,c)}$ for some $(b,c) \in P^2$.

By Lemma~\ref{dP_families}, there is a $K \in R$ such that for every $(b,c) \in P^2$, if $P_{(b,c)}$ is infinite then $$\lim \sup \Delta P_{(b,c)} \leq K.$$ If $m \in \N$ and there are at least $m$ distinct sequences $\sigma_1, \ldots, \sigma_m$ of length $n$ such that the sets $P_{\sigma_i}$ are all infinite, then the sets $P_{\sigma_1}, \ldots, P_{\sigma_m}$ are pairwise disjoint, and it follows that if $k = \min (\Delta P)$ then there must be some $\sigma_i$ such that $\lim \sup (\Delta P_{\sigma_i}) \geq m \cdot k$. Therefore $k \cdot m \leq K $, yielding a finite bound on $m$ since $k$ and $K$ are from the Archimedean structure $\mc{R}$.
\end{proof}

Now use the previous Lemma to fix $n, m \in \N$ such that for every $n' \geq n$, there are precisely $m$ distinct infinitely-recurring sequences of length $n'$. Also, let $\sigma_1, \ldots, \sigma_m$ list all of the infinitely-recurring sequences of length $n$.

\begin{lem}
\label{extensions}
For any infinitely-recurring sequence $\sigma$ from $\Delta P$ of length $n$, there is a unique infinitely-recurring sequence $\sigma'$ of length $n+1$ such that $\sigma'$ extends $\sigma$.
\end{lem}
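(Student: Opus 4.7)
The plan is a pure counting argument built on top of Lemma~\ref{inf_recurrence}. Recall that $n$ was chosen so that for every $n' \geq n$ there are exactly $m$ infinitely-recurring sequences of length $n'$; in particular there are $m$ such sequences of length $n$ (namely $\sigma_1, \ldots, \sigma_m$) and $m$ of length $n+1$. The key observation is that truncation of the last entry defines a map
\[
\pi : \{\tau : \tau \textup{ is an infinitely-recurring sequence of length } n+1\} \longrightarrow \{\sigma_1, \ldots, \sigma_m\},
\]
and I will show that $\pi$ is both well-defined and surjective, hence (as both sides have cardinality $m$) a bijection. Uniqueness of the extension follows immediately.

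First I would verify that $\pi$ is well-defined: if $\tau = \langle c_1, \ldots, c_{n+1} \rangle$ has $P_\tau$ infinite, then directly from Definition~\ref{motif} any $a \in P_\tau$ also satisfies the constraints defining $P_{\pi(\tau)}$, so $P_\tau \subseteq P_{\pi(\tau)}$ and therefore $P_{\pi(\tau)}$ is infinite, i.e.\ $\pi(\tau)$ is infinitely-recurring of length $n$. Next, for surjectivity I repeat the pigeonhole step already used in the proof of Lemma~\ref{inf_recurrence}: fix $\sigma = \langle c_1, \ldots, c_n\rangle$ with $P_\sigma$ infinite, consider the function $a \mapsto s^n_P(a) - s^{n-1}_P(a)$ defined on $P_\sigma$, and note that its range lies in the finite set $\Delta P$ (Corollary~\ref{dP_finite}). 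By pigeonhole some $c \in \Delta P$ is attained infinitely often, and then $\sigma' := \langle c_1, \ldots, c_n, c\rangle$ has $P_{\sigma'}$ infinite and $\pi(\sigma') = \sigma$.

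Having established that $\pi$ is a surjection between two finite sets of the same cardinality $m$, it is automatically a bijection. Thus each $\sigma_i$ has exactly one preimage under $\pi$, which is precisely the unique infinitely-recurring extension of $\sigma_i$ of length $n+1$, completing the proof. There is no real obstacle here beyond the bookkeeping; the content of the lemma is entirely carried by the equality of counts supplied by Lemma~\ref{inf_recurrence} together with the finiteness of $\Delta P$.
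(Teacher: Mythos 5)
Your proof is correct and follows essentially the same route as the paper's: existence of an extension via the pigeonhole principle applied to the finite set $\Delta P$, and uniqueness via the count of exactly $m$ infinitely-recurring sequences in each length supplied by Lemma~\ref{inf_recurrence} (the paper phrases this as ``the $m$ chosen extensions must exhaust all $m$ sequences of length $n+1$'' rather than as a surjection between equinumerous finite sets, but the content is identical).
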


\begin{proof}
The existence of some such infinitely-recurring extension $\sigma'$ is immediate from the finiteness of $\Delta P$ and the pigeonhole principle. If we choose $\sigma'_i$ to be some infinitely-recurring extension of $\sigma_i$ of length $n+1$, then $\sigma'_1, \ldots, \sigma'_m$ are $m$ distinct infinitely-recurring sequences of length $n+1$, so by the choice of $m$ and $n$, they must list \emph{every} infinitely-recurring sequence of length $n+1$. This establishes the uniqueness of $\sigma'_i$.

\end{proof}

\begin{definition}
\label{successor} Given an infinitely-recurring sequence $\sigma = \langle c_1, \ldots, c_n\rangle$ of length $n$, let $\sigma' = \langle c_1, \ldots, c_{n+1} \rangle$ be the extension defined in Lemma~\ref{extensions}. Then we define $$\sigma^+ = \langle c_2, c_3, \ldots, c_{n+1} \rangle,$$ which is another infinitely-recurring sequence of length $n$.
\end{definition}

Now if we recursively define a sequence $\tau_1, \tau_2, \ldots$ such that $\tau_1 = \sigma_1$ and $\tau_{i+1} = \tau_i^+$, then the fact that there are only finitely many infinitely-recurring sequences of length $n$ implies that $\{\tau_i : i \in \omega\}$ is \emph{eventually periodic}: there is some $\ell$ (the period) and some $N$ such that for every $i \geq N$, we have $d_{i + \ell} = d_i$. If $\tau_i, \ldots, \tau_{i + \ell} = \tau_i$ is a cycle of length $\ell$, then starting from any sufficiently large $a$ realizing $P_{\tau_i}$, a routine inductive argument implies that any $a' \in P$ with $a' \geq a$ must realize one of the predicates $P_{\tau_i}, \ldots, P_{\tau_{i + \ell - 1}}$. It follows from the definition of $m$ that $\ell = m$ and that the sequence $\tau_1, \tau_2, \ldots$ is in fact periodic of periodicity $m$.

By the previous paragraph, we may assume that $\sigma_i^+ = \sigma_{i+1}$ if $i < m$ and $\sigma_m^+ = \sigma_1$.

\begin{prop}
\label{ev_periodic}
If $d_i = a_{i+1} - a_i$, then the sequence $d_0, d_1, \ldots$ is eventually periodic of periodicity $m$.

Thus $P$ is the union of some finite set and finitely many infinite arithmetic sequences, establishing part (2) of Theorem~\ref{discrete_R}.
\end{prop}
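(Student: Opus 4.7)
The plan is to show that, beyond some initial segment, the sequence $(a_i)$ cycles through the predicates $P_{\sigma_1}, \ldots, P_{\sigma_m}$ in the cyclic order already established in the paragraph preceding the proposition (where $\sigma_j^+ = \sigma_{j+1}$, indices mod $m$). This immediately forces the difference sequence $(d_i)$ to be eventually periodic of period $m$.

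First I would show that, for all sufficiently large $i$, the element $a_i$ realizes exactly one $P_{\sigma_{j(i)}}$ (with $j(i) \in \{1, \ldots, m\}$), and moreover realizes the unique infinitely-recurring length-$(n+1)$ extension $P_{\sigma_{j(i)}'}$ provided by Lemma~\ref{extensions}. The reason is that $\Delta P$ is finite by Corollary~\ref{dP_finite}, so there are only finitely many length-$n$ and length-$(n+1)$ patterns over $\Delta P$; by Lemma~\ref{inf_recurrence}, exactly $m$ of each length are infinitely-recurring, while every non-infinitely-recurring pattern $\tau$ gives $P_\tau$ finite. Choosing $N \in \N$ beyond all the finitely many exceptional elements handles both lengths at once.

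The key step is to verify that $j(i+1) \equiv j(i) + 1 \pmod{m}$ for every $i \geq N$. Writing $\sigma_{j(i)}' = \langle c_1, \ldots, c_{n+1}\rangle$, the relation $a_i \in P_{\sigma_{j(i)}'}$ gives $s_P^k(a_i) - s_P^{k-1}(a_i) = c_k$ for $1 \leq k \leq n$, which re-indexed around $a_{i+1} = s_P(a_i)$ becomes $s_P^k(a_{i+1}) - s_P^{k-1}(a_{i+1}) = c_{k+1}$ for $1 \leq k < n$. This is exactly the defining condition for $a_{i+1} \in P_{\sigma_{j(i)}^+}$ with $\sigma_{j(i)}^+ = \langle c_2, \ldots, c_{n+1}\rangle$, and the cyclic choice of the $\sigma_j$'s gives $\sigma_{j(i)}^+ = \sigma_{j(i)+1}$ (mod $m$), as required.

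Once this is in hand, the conclusion is automatic: $d_i$ is the first coordinate of $\sigma_{j(i)}$ (by the $k=1$ case of the defining condition), so eventual periodicity of $(j(i))$ with period $m$ transfers to eventual periodicity of $(d_i)$ with period $m$. For part~(2) of Theorem~\ref{discrete_R}, let $T = d_N + d_{N+1} + \ldots + d_{N+m-1}$; then for each $k \in \{0, 1, \ldots, m-1\}$ the set $\{a_{N+k+jm} : j \in \N\}$ equals the arithmetic progression $\{a_{N+k} + jT : j \in \N\}$, and hence $P$ decomposes as the disjoint union of the finite set $\{a_i : i < N\}$ together with these $m$ infinite arithmetic progressions. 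The only delicate point in the plan is the index bookkeeping in the key step, matching the shift by $s_P$ with the pattern operation $\sigma \mapsto \sigma^+$; this is a direct unwinding of Definitions~\ref{motif} and~\ref{successor} but is easy to mis-index on a first attempt.
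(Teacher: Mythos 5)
Your proposal is correct and follows essentially the same route as the paper's proof: both choose $N$ so that every $a_i$ with $i \geq N$ realizes one of the infinitely-recurring patterns $P_{\sigma_j}$ and its unique extension $P_{\sigma_j'}$, deduce that $a_{i+1}$ then realizes $P_{\sigma_j^+} = P_{\sigma_{j+1 \bmod m}}$, and read off $d_i$ as the first coordinate of the pattern. Your version merely makes explicit the index bookkeeping for $\sigma \mapsto \sigma^+$ and the common difference $T = d_N + \cdots + d_{N+m-1}$ of the resulting progressions, which the paper leaves implicit.
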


\begin{proof}
As observed in the proof of Lemma~\ref{extensions}, $\sigma'_1, \ldots, \sigma'_m$ must enumerate all the infinitely-recurring sequences of length $n+1$. This plus the fact that there are only finitely many sequences from $\Delta P$ of length $n$ or $n+1$ implies that we can choose an $N \in \omega$ such that for every $i \geq N$,

\begin{enumerate}
\item $a_i$ realizes one of the predicates $P_{\sigma_1}, \ldots, P_{\sigma_m}$, and
\item $a_i$ realizes one of the predicates $P_{\sigma'_1}, \ldots, P_{\sigma'_m}$.
\end{enumerate}

So if $i \geq N$ and $a_i$ realizes $P_{\sigma_j}$, then $a_{i+1}$ realizes $P_{\sigma^+_j}$ and $\sigma^+_j$ is either $\sigma_{j+1}$ (if $j < m$) or $\sigma_1$ (if $j = m$). This implies that $\{d_i : i \in \omega\}$ is periodic for $i \geq N$ with periodicity $m$.

\end{proof}

Finally, we establish part (4) of Theorem~\ref{discrete_R}:

\begin{prop}
\label{discrete_images}
If $P \subseteq R^{>0}$ is definable, infinite, and discrete, then for any definable $f: P \rightarrow R$, the image $f(P)$ is also discrete.
\end{prop}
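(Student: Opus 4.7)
By Theorem~\ref{discrete_R}(2)--(3), $P$ is, up to a finite set, a union of commensurable infinite arithmetic progressions $X_1, \ldots, X_k$. Since $f$ restricted to the finite part has finite image and a finite union of closed discrete sets is closed discrete (by Corollary~\ref{nowheredense}(1), together with the elementary observation that isolated points of one such set remain isolated in the union), it suffices to prove the statement when $P = \{a + bi : i \in \N\}$ is a single arithmetic progression; I assume this throughout.

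The plan is to show that the definable set $\Delta f(P) := \{f(s_P(p)) - f(p) : p \in P\}$ is finite. Granted this, discreteness of $f(P)$ follows directly. Indeed, enumerate $\Delta f(P) = \{c_1, \ldots, c_k\}$: the preimages $(\Delta f)^{-1}(c_j)$ partition $P$ into definable subsets, each of which, by Theorem~\ref{discrete_R}(2)--(3), is a finite union of arithmetic progressions commensurable with $P$. Refining, one obtains an $m \in \N$ and a partition of $P$ (up to a finite set) into residue classes $Y_r = \{p_{r + m\ell} : \ell \in \N\}$ on each of which $\Delta f$ is constant, say $\Delta f|_{Y_r} \equiv d_r$. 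Summing over a full period then yields $f(p_{i+m}) - f(p_i) = D := \sum_{r=0}^{m-1} d_r$, eventually independent of $i$. Hence $f(P)$ is, up to a finite set, a union of at most $m$ arithmetic progressions in $R$ with common difference $D$, which is closed discrete.

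To prove $\Delta f(P)$ is finite, I adapt the chain of arguments in Lemma~\ref{dP_bounded}, Lemma~\ref{dP_families}, Lemma~\ref{dP_finite_limits}, and Corollary~\ref{dP_finite}, with the definable function $\Delta f$ playing the role of the successor-gap map $p \mapsto s_P(p) - p$. The inp-pattern constructions of Lemma~\ref{dP_families} (uniform bound on $\limsup \Delta Q_a$ across a definable family of infinite subsets $Q_a \subseteq P$) and Lemma~\ref{dP_finite_limits} (finiteness of the set of limit points of $\Delta f(P)$) transcribe essentially verbatim using the definable family $\{p \in P : \Delta f(p) \in (\ell, r)\}$, since these are subsets of the closed discrete set $P$. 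Together with Corollary~\ref{limitpts}(1) applied to $\Delta f(P)$ (which rules out isolated accumulation points) and the Archimedean structure of $\mc{R}$, one concludes that $\Delta f(P)$ has finitely many limit points and no isolated ones, and so is itself closed discrete.

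The main obstacle is the boundedness step, which cannot be handled by a direct analogue of Lemma~\ref{dP_bounded}: Corollary~\ref{discr_sparse} requires a discrete set, but while the ``record-setting'' subset $P' = \{p \in P : \forall q \in P \, (q < p \to \Delta f(q) < \Delta f(p))\}$ is discrete as a subset of $P$, the relevant unbounded set $\Delta f(P')$ is not a priori discrete. My plan is to bypass this by first carrying out the limit-point analysis above (establishing discreteness of $\Delta f(P)$), whereupon $\Delta f(P') \subseteq \Delta f(P)$ becomes a discrete set; one then invokes Corollary~\ref{discr_sparse} on $\Delta f(P')$, combined with the Archimedean property of $\mc{R}$ and the strict growth of $\Delta f|_{P'}$ along a carefully chosen subsequence of records, to derive the needed contradiction and so conclude that $\Delta f(P)$ is finite.
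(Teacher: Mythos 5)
Your overall strategy hinges on the claim that $\Delta f(P) = \{f(s_P(p)) - f(p) : p \in P\}$ is finite, and that claim is false: it is strictly stronger than discreteness of $f(P)$ and already fails in Presburger arithmetic, which is dp-minimal and Archimedean and so falls squarely under Theorem~\ref{discrete_R}. Take $P = \Z^{>0}$ and let $f(n) = n$ for $n$ even and $f(n) = n+n$ for $n$ odd; this is definable, $f(P) = 2\Z^{>0}$ is discrete, but $\Delta f(n) = n+2$ for every even $n$, so $\Delta f(P)$ is infinite and unbounded. (The same phenomenon occurs in densely ordered examples such as $\langle \R; +, <, \Z, \Q \rangle$ of Section~3.1.) Hence no argument can establish your intermediate claim, and the difficulty you flag in the ``boundedness step'' is a symptom of this rather than a technical obstacle to be bypassed. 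Your first paragraph --- reducing to a single progression and deducing discreteness of $f(P)$ from finiteness of $\Delta f(P)$ --- is sound modulo checking that the progressions $X_i$ may be taken definable (they may, as suitable truncations of the sets $P_{\sigma_j}$), but the second half of the plan cannot be carried out. A secondary caution: even the steps you say ``transcribe essentially verbatim'' do not, since the separation estimate in Lemma~\ref{dP_finite_limits} uses that the quantity $s_P(x)-x$ \emph{is} the distance between consecutive points, which has no analogue for $\Delta f$ (one must substitute $k = \min \Delta P$ from Corollary~\ref{dP_finite}).

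The paper's proof takes a different and shorter route that never considers $\Delta f$. Supposing $f(P)$ is not discrete, Corollary~\ref{limitpts} forces the set of limit points of $f(P)$ to be infinite, so one chooses infinitely many pairwise disjoint intervals $I_j$ each meeting $f(P)$ in an infinite set; the preimages $P_j = f^{-1}[I_j]$ form a definable family of pairwise disjoint infinite subsets of $P$. Disjointness forces the sets $\widetilde{P}_j$ of indices to have upper density less than $1/n$ for all but finitely many $j$, and since consecutive points of $P$ are at least $k = \min \Delta P$ apart, this gives $\lim \sup \Delta P_j \geq n \cdot k$ for all but finitely many $j$, contradicting the uniform bound of Lemma~\ref{dP_families}. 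If you want to rescue your approach, the object to control is not the difference function $\Delta f$ but the gap structure of the fibers $f^{-1}[I]$ inside $P$, which is exactly what Lemma~\ref{dP_families} is designed to handle.
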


\begin{proof}
Suppose $f(P)$ is not discrete. Then the derivative $(f(P))'$ is nonempty, and by Corollary~\ref{limitpts}, $(f(P))'$ must be infinite, since otherwise each of its finitely many points would be a limit point of $f(P) \setminus (f(P))'$. Therefore we can choose an infinite collection $\{I_i : i \in \N \}$ of pairwise-disjoint intervals such that $f(P) \cap I_i$ infinite for each $i$. Let $P_i = f^{-1}\left[I_i\right]$, which is a definable family of infinite, pairwise-disjoint subsets of $P$.

Say $P = \{a_i : i < \omega\}$ is the enumeration of $P$ in increasing order. Letting $\widetilde{P}_j = \{i \in \N : a_i \in P_j\}$, for every positive integer $n$, all but finitely many of the sets $\widetilde{P}_j$ must have upper density less than $1/n$. But if $k = \min \Delta P$ (which exists by Lemma~\ref{dP_finite}), then this implies that $\lim \sup \Delta P_j \geq n \cdot k $ for all but finitely many $j \in \N$, contradicting Lemma~\ref{dP_families}. 
\end{proof}

\begin{remark}
The proof part (2) of Theorem~\ref{discrete_R} clearly can be applied to any unary definable set $P \subseteq R$ in a strong ordered Abelian group $ \langle R; <, +, \ldots \rangle$ such that $P \cap R^{>0}$ has an initial segment $P_0 = \{a_i : i < \omega\}$ of order type $\omega$, and such that there is a single Archimedean class $C$ which contains every $a_i$ and every $a_{i+1} - a_i$; in this case, the conclusion is that $P_0$ is a finite union of arithmetic progressions. In fact, it is enough just to assume that every $a_{i+1} - a_i$ is in the same Archimedean class $C$, since (working via translations) we may as well assume that every $a_i$ is in $C$ as well.
\end{remark}

We also note in passing that there is a theorem in algebra by Conrad (see \cite{Conrad}) which says that any Archimedean left-ordered group (that is, a group equipped with an ordering which is invariant under left translations) embeds as an ordered group into $\langle \R; +, < \rangle$ and hence is Abelian. Therefore Theorem~\ref{discrete_R} applies of any Archimedean left-ordered group with a strong theory. There do seem to be examples of dp-rank $2$ left-ordered non-Abelian groups, such as the Klein bottle group, which we may discuss in a future paper, and it may be interesting to investigate whether Theorem~\ref{discrete_R} can be generalized to left-ordered groups with strong theories.

\section{Examples}

In this section we provide a series of examples that illustrate the large variety of possibilities for structures of finite inp-rank or dp-rank.

\subsection{$\Th(\langle \R; +, <, 0,1 ,\Z, \Q \rangle)$}

Corollary \ref{nowheredense} states that in a strong densely-ordered expansion of an Abelian group there are no definable infinite discrete sets with an accumulation point.  In this subsection we show by example that it is possible to have a such a structure with an infinite definable discrete set, and furthermore that this is possible in a definably complete structure.  Also we would like to have an example of a finite dp-rank structure with a definable dense and codense subset.  We show that in fact we may have both of types of sets simultaneously in a finite dp-rank structure. 

We show that:

\begin{prop}\label{zq}
$\Th(\langle \R; +, <, \Z, \Q \rangle)$ has dp-rank $3$.
\end{prop}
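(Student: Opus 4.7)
I split the proof into a lower bound $\textup{dp-rank} \geq 3$ and an upper bound $\textup{dp-rank} \leq 3$.

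For the lower bound, I would exhibit an ict-pattern of depth $3$ in a single free variable $x$. The floor function $\floor{\cdot}$ is definable (as the unique $n \in \Z$ with $n \leq x < n+1$), and hence so is the fractional part $\{x\} := x - \floor{x}$. I would use three rows of formulas: $\varphi_1(x; y, z) : y \leq \{x\} < z$, with parameters $(y_i, z_i)$ enumerating pairwise disjoint subintervals $[y_i, z_i) \subseteq [0,1)$; $\varphi_2(x; y) : \floor{x} = y$, with parameters $n_i \in \Z$ pairwise distinct; and $\varphi_3(x; y) : x - y \in \Q$, with parameters $\alpha_i \in \R$ drawn from pairwise distinct $\Q$-cosets of $\R$ (for instance $\alpha_i = i \sqrt{2}$). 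For any path $\eta : \{1,2,3\} \to \omega$, consistency is witnessed by any $x = n_{\eta(2)} + r$ with $r \in [y_{\eta(1)}, z_{\eta(1)}) \cap (\alpha_{\eta(3)} + \Q)$; such $r$ exists because $\alpha_{\eta(3)} + \Q$ is dense in $\R$, and the required negations in each row hold automatically from the pairwise disjointness or distinctness of that row's parameters.

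For the upper bound, I would apply subadditivity of dp-rank (see \cite{additivity_dp_rank}) twice. The floor map $\floor{\cdot} : \R \to \Z$ is a definable surjection whose fibers $[n, n+1)$ are each definably isomorphic to $[0,1)$ by translation, yielding $\textup{dp-rank}(\R) \leq \textup{dp-rank}(\Z) + \textup{dp-rank}([0,1))$. The induced structure on $\Z$ is just Presburger arithmetic, since $\Q \cap \Z = \Z$ collapses the predicate $\Q$ to the full sort on $\Z$, and Presburger is dp-minimal. To bound $\textup{dp-rank}([0,1))$, pass in $T^{\textup{eq}}$ to the imaginary quotient $[0,1)/{\sim_\Q}$, where $x \sim_\Q y$ iff $x - y \in \Q$: this is naturally a divisible torsion-free Abelian group (essentially $\R/\Q$ as a pure $\Q$-vector space), which is strongly minimal and hence dp-minimal, while each $\sim_\Q$-class is a dense linear order, also dp-minimal. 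A second application of subadditivity gives $\textup{dp-rank}([0,1)) \leq 2$, and combining, $\textup{dp-rank}(\R) \leq 1 + 2 = 3$.

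The main obstacle is justifying the bound $\textup{dp-rank}([0,1)) \leq 2$, because the quotient by $\sim_\Q$ is only an imaginary sort and its induced structure must be shown to reduce to that of a pure $\Q$-vector space (with no additional relations visible from the ambient language). A self-contained alternative would be to analyze $1$-types directly in $\langle \R; +, <, \Z, \Q\rangle$: show that any complete $1$-type over a small parameter set $A$ is determined by the triple $(\floor{x},\, x + \Q,\, \text{the cut of } x \text{ over } A + \Q)$, each component contributing at most one ``dimension'' of parameter-dependence in an ict-pattern, thereby bounding the total dp-rank by $3$.
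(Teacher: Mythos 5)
Your lower bound is correct and is essentially the same pattern the paper uses (integer translates, fractional-part subintervals, and $\Q$-cosets), and your skeleton for the upper bound --- decompose $\tp(c)$ into $\tp(\floor{c})$ plus $\tp(c/\floor{c})$ and apply subadditivity --- is also the paper's. The gap is in how you bound the two pieces. First, ``the induced structure on $\Z$ is Presburger because $\Q\cap\Z=\Z$'' only controls quantifier-free traces; the induced structure consists of \emph{all} traces on $\Z$ of sets definable (with real parameters) in the full structure, and a priori an existential quantifier ranging over $\R$ could carve out a non-Presburger subset of $\Z$. The paper has to prove quantifier elimination for $\langle \R; +,<,0,1,\Z,\Q,\floor{\;},\lambda\rangle_{\lambda\in\Q}$ and deduce from the shape of terms that every definable subset of $\Z$ is $\langle\Z;+,<\rangle$-definable (Lemma~\ref{defsets}); some such elimination result is unavoidable here.

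Second, and more seriously, your bound $\textup{dp-rank}([0,1))\le 2$ is not established. The quotient $[0,1)/{\sim_\Q}$ is an imaginary sort, and the assertion that its induced structure is that of a pure $\Q$-vector space (hence strongly minimal) is exactly the kind of claim that requires a description of definable sets; nothing in your argument rules out extra induced structure on the quotient. Your fallback --- that a $1$-type is determined by $(\floor{x},\, x+\Q,\, \text{the cut over } A+\Q)$ with ``each component contributing at most one dimension'' --- is a heuristic rather than a proof: determining a type by three invariants does not by itself bound dp-rank by $3$. The paper's route is to prove QE, show that the trace of any definable set on a fiber $(n,n+1)$ is definable in the dense-pair reduct $T_1=\Th(\langle\R;+,<,0,1,\Q,\lambda\rangle_{\lambda\in\Q})$ (Lemma~\ref{qdef}), and then prove $T_1$ has dp-rank $2$ by a cell/inp-pattern analysis resting on known facts about dense pairs (NIP, weak o-minimality of the induced structure on $\Q$). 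You would need to supply analogues of these steps for your argument to close.
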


Let $\mc{L}=\{+,<,0,1,Z,Q,\floor{\;}, \lambda\}_{\lambda \in \Q}$ where $Z,Q$ are unary relations and $\floor{\;}$ and the $\lambda$'s are unary functions.  Consider the $\mc{L}$-structure $\mathcal{R}=\langle \R;  +, <,0, 1,  \Z, \Q, \floor{\; }, \lambda \rangle_{\lambda \in \Q}$ where the $\lambda$ are unary functions for multiplication by a rational and $\floor{\;}$ is the integer part function ($\floor{x}$ is the greatest integer less than or equal to $x$).  Let $T=\Th(\mc{R})$.   Let $\mc{L}_0$ be  $\mc{L}$ with the symbol $Q$ omitted. We need to establish that $\Th(\mathcal{R})$ has quantifier elimination.  Our proof will rely heavily on the fact that the $\mc{L}_0$-structure $\mc{R}_0=\langle \R; +, <,0,1, Z, \floor{\;},\lambda\rangle_{\lambda \in Q}$ has quantifier elimination and is universally axiomatizable (\cite{ivp}).  Let $T_0=\Th(\mc{R}_0)$.     Before proving quantifier elimination we collect  some basic facts, which we state in a form necessary for use in the ensuing results, on terms in models of $T_0$.  The proofs are by straightforward induction on terms.  In the remainder of this section we write $A$ for the universe of $\mathfrak{A}$ and similarly for other Fraktur letters denoting models.

\begin{lem}\label{term} 
 Let $\mf{B} \models T_0$ and let $t(x,\ob{y})$ be a term. Let  $\mathfrak{A} \subseteq \mf{B}$ and $l \in Z(A)$.  

\begin{enumerate} \item  $t(x,\ob{y})$ is of the form 
\[t(x,\ob{y})=\sum^m_{i=1}\lambda_i\floor{s_i(x,\ob{y})}+\lambda x + s(\ob{y}).\] 
where the $s_i$ and $s$ are terms.  In particular if
$\mf{B} \models Z(t(b,\ob{a}))$ with $\ob{a} \subseteq A$ then for some $\mu \in \Q$ and some $a \in A$ it must be the case that 
$\mf{B} \models Z(\mu b +a)$.
\item
Suppose that $\ob{a} \subseteq A$ and let $t=t(x,\ob{a})$. There are intervals $I_1, \dots, I_n \subseteq (l,l+1)$ definable from $A$ so that $t \restriction I_i$ is equal to $\lambda_i x + a_i$ for each $i \in \{1 \dots n\}$ for some $\lambda_i \in \Q$ and $a_i \in A$ and so that $\floor{t} \restriction I_i$ is constant for each $i \in \{1 \dots, n\}$.
\item Let $\ob{a} \subseteq A$ and set $t=t(x,\ob{a})$.  There is a partition of $Z(B)$ into sets $X_1, \dots, X_n$ definable in the structure 
$\langle Z(B),+,< \rangle$ so that $t \restriction X_i=\lambda_ix + b_i$ for some $\lambda_i \in \Q$ and $b_i \in B$ for all $i \in \{1 \dots n\}$.
\end{enumerate}
\end{lem}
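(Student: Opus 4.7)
The proof of all three parts proceeds by structural induction on the term $t$ in the language $\mc{L}_0 = \{+, <, 0, 1, Z, \floor{\;}, \lambda\}_{\lambda \in \Q}$. In each part the atomic cases (variables and constants from $\ob{a}$) are trivial, and closure under $+$ and scalar multiplication by a rational $\lambda$ is handled by taking a common refinement of the two partitions produced by the inductive hypothesis. The only substantive step is the treatment of $\floor{\cdot}$, and the three parts differ essentially in how this one step is carried out.

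For part (1), the inductive hypothesis is simply that $t$ has the displayed canonical form. For the floor step I regard $\floor{t_1}$ as a new summand $1 \cdot \floor{t_1(x, \ob{y})}$ appended to the accumulated list of $\lambda_i \floor{s_i}$ terms, with the linear and constant parts set to zero. For the ``in particular'' clause, suppose $t(b, \ob{a}) \in Z(B)$ and let $N \in \Z^{>0}$ clear the denominators of $\lambda_1, \dots, \lambda_m$. Since each $\floor{s_i(b, \ob{a})} \in Z(B)$, the sum $\sum_i (N\lambda_i) \floor{s_i(b, \ob{a})}$ also lies in $Z(B)$; subtracting this from $N t(b, \ob{a}) \in Z(B)$ yields $N\lambda b + N s(\ob{a}) \in Z(B)$, giving the required $\mu = N\lambda \in \Q$ and $a = N s(\ob{a}) \in A$.

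For part (2) I maintain the stronger invariant that $t$ is piecewise of the form $\mu x + c$ with $\mu \in \Q$ and $c \in A$ on finitely many sub-intervals of $(l, l+1)$ whose endpoints themselves lie in $A$. For the floor step, by the inductive hypothesis partition $(l, l+1)$ into pieces of the form $(p, q)$ with $p, q \in A$, on which $t_1(x) = \mu x + c$ and $c \in A$. The range of $t_1$ on $(p, q)$ is an interval of length $\leq |\mu|$ with endpoint $\mu p + c \in A$, so $n_0 := \floor{\mu p + c} \in Z(A)$ and any integer crossed by $t_1$ inside $(p, q)$ has the form $n_0 + j$ for some small $j \in \Z$, again in $Z(A)$. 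The further subdivision points $(n_0 + j - c)/\mu$ therefore lie in $A$, and $\floor{t_1}$ is constant on each refined sub-interval with value in $Z(A)$.

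For part (3) the analogous induction runs inside the structure $\langle Z(B), +, < \rangle$, with the invariant that $t(z, \ob{a})$ is, piecewise on $\langle Z(B), +, <\rangle$-definable sets, of the form $\mu z + c$ with $\mu \in \Q$ and $c \in B$. For the floor step, suppose $t_1(z) = (p/q) z + c$ in lowest terms on a piece $X$; partition $X$ into the $q$ residue classes $X_r = \{z \in X : z \equiv r \pmod{q}\}$, which are definable in $\langle Z(B), +, < \rangle$ because ``$q \mid z$'' is. On $X_r$, writing $z = qm + r$ gives $\floor{(p/q) z + c} = pm + \floor{pr/q + c} = (p/q) z - pr/q + \floor{pr/q + c}$, which is linear in $z$ with rational slope and a constant in $B$ independent of $z$. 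The main obstacle throughout is bookkeeping — verifying at each inductive step that the new partition points, the subsidiary terms inside the floor symbols, and the resulting constants stay inside the prescribed definability class — but no single step is deep, which is why the lemma is stated as a straightforward induction.
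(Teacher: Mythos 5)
Your proof is correct and takes essentially the same approach as the paper, which gives no details beyond the remark that ``the proofs are by straightforward induction on terms.'' Your write-up simply supplies the bookkeeping the paper leaves implicit: the normal form and denominator-clearing for part (1), refinement of the interval partition at each $\floor{\;}$-step with new endpoints $(n_0+j-c)/\mu \in A$ for part (2), and the passage to residue classes mod $q$ inside $\langle Z(B),+,<\rangle$ for part (3).
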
 

These facts have useful consequences for definable sets in model of $T_0$.  The following lemma is an easy consequence of Lemma \ref{term} and quantifier elimination for $T_0$.

\begin{lem}\label{defsets}.  Let $\mf{B} \models T_0$ and let $X \subseteq B$ be definable over $\mf{A} \subseteq \mf{B}$.
\begin{enumerate}
\item If $n \in Z(A)$ then $X \cap (n,n+1)$ is a finite union of points and intervals definable from $A$.
\item If $X \subseteq Z(B)$ then $X$ is definable in the structure $\langle Z(B),+,< \rangle$.
\end{enumerate}
\end{lem}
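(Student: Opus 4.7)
The plan is to invoke quantifier elimination for $T_0$ and then reduce each atomic formula to a standard form using Lemma~\ref{term}. Since $T_0$ has QE, the set $X$ is defined by a quantifier-free $\mc{L}_0$-formula $\varphi(x,\ob{a})$ with $\ob{a}\subseteq A$, a Boolean combination of atomic formulas of the shapes $t_1(x,\ob{a}) = t_2(x,\ob{a})$, $t_1(x,\ob{a}) < t_2(x,\ob{a})$, and $Z(t(x,\ob{a}))$. Because the classes of ``finite unions of points and $A$-definable intervals'' (for (1)) and of ``sets definable in $\langle Z(B), +, <\rangle$'' (for (2)) are both closed under Boolean operations, it suffices to analyze each atomic formula on a single piece of a common refinement of the partitions supplied by Lemma~\ref{term} applied to every term appearing in $\varphi$.

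For (1), I would apply Lemma~\ref{term}(2) to each term of $\varphi$ and take a common refinement, obtaining finitely many intervals $I_1,\dots,I_r \subseteq (n,n+1)$ definable from $A$ whose union equals $(n,n+1)$ minus finitely many $A$-definable points, on each of which every term agrees with an affine function $\lambda x + a'$ for some $\lambda \in \Q, a' \in A$, and every $\floor{\cdot}$-subterm is constant. Restricted to $I_j$, an equality $t_1 = t_2$ becomes $\mu x = a'$, defining at most a single $A$-definable point or all of $I_j$; an inequality $t_1 < t_2$ becomes $\mu x < a'$, defining $\emptyset$, an $A$-definable open sub-interval, or all of $I_j$; and $Z(\mu x + a')$ defines either all (or none) of $I_j$ when $\mu = 0$, or else, because $|I_j| \leq 1$, at most $|\mu|+1$ many $A$-definable points. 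Taking Boolean combinations and unioning over $j$ then yields (1).

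For (2), I would apply Lemma~\ref{term}(3) to each term of $\varphi$ and take a common refinement, obtaining a finite partition $X_1,\dots,X_r$ of $Z(B)$ definable in $\langle Z(B),+,<\rangle$ on which every term restricts to an affine map $\lambda x + b$ with $\lambda\in\Q$ and $b\in B$. On $X_j$, an equality $\mu x = c$ defines at most a singleton, using the parameter $c/\mu \in Z(B)$ whenever this set is nonempty; an inequality $\mu x < c$ defines an initial or final segment of $X_j$ whose cutoff, by discreteness of $Z(B)$, is $\floor{c/\mu} \in Z(B)$; and $Z(\mu x + c)$, upon writing $\mu = p/q$ in lowest terms, becomes a condition on $x \in Z(B)$ that is periodic of period $q$ and hence defines a (possibly empty) union of residue classes modulo $q$, which is Presburger-definable. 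Each such set lies in $\langle Z(B),+,<\rangle$, so does their Boolean combination, giving (2).

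The main obstacle I anticipate is the verification that the parameters witnessing these reductions in (2) actually lie in $Z(B)$ rather than merely in $B$: that the cut $c/\mu \in B$ for an affine inequality is faithfully replaced by $\floor{c/\mu} \in Z(B)$, and that $Z(\mu x + c)$ is genuinely equivalent inside $Z(B)$ to a (possibly empty) union of residue conditions. Both facts follow from the discreteness of $Z(B)$ in $\mf{B}$ together with Lemma~\ref{term}, but the bookkeeping around common denominators and fractional offsets is the most delicate part of the argument; once it is dispatched, everything else reduces to straightforward Boolean combinations.
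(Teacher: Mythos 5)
Your proposal is correct and follows exactly the route the paper intends: the paper gives no details, stating only that the lemma "is an easy consequence of Lemma~\ref{term} and quantifier elimination for $T_0$," and your argument is precisely that reduction (QE to quantifier-free formulas, then Lemma~\ref{term}(2) and (3) to put every term in affine form on a common refinement, then a case analysis of the atomic formulas). The bookkeeping you flag for part (2) — replacing cuts $c/\mu \in B$ by $\floor{c/\mu} \in Z(B)$ and turning $Z(\mu x + c)$ into residue conditions — is handled correctly.
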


\begin{prop} $T=\Th({\mc{R}})$ has quantifier elimination.
\end{prop}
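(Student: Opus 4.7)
The plan is to apply the standard embedding-extension criterion for quantifier elimination, using the known QE for $T_0$ (from \cite{ivp}) as a black box. Take $\mf{A}, \mf{B} \models T$ with $\mf{B}$ sufficiently saturated, and a partial $\mc{L}$-embedding $f: \mf{A}_0 \to \mf{B}$ with $\mf{A}_0 \subseteq \mf{A}$. By the usual back-and-forth (using saturation of $\mf{B}$), it is enough to extend $f$ one element at a time: given $b \in A \setminus A_0$, I will pick $b' \in B$ so that $f \cup \{(b, b')\}$ is $\mc{L}_0$-elementary (which is possible by QE for $T_0$ and saturation), and then verify that this extension remains an $\mc{L}$-embedding on the substructure generated by $A_0 \cup \{b\}$. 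Since $\mc{L}$ differs from $\mc{L}_0$ only by the predicate $Q$, the sole task is to check that $Q$ is preserved.

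The key structural observation is that every $\mc{L}$-substructure of a model of $T$ already contains $\Q$: it holds $0$ and $1$, is closed under $+$, and has a unary function symbol for multiplication by each $\lambda \in \Q$, so $\lambda = \lambda \cdot 1$ lies in every substructure. In particular $\Q \subseteq A_0$, and $A_0$ is a $\Q$-vector space closed under $\floor{\;}$.

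Every element of the substructure generated by $A_0 \cup \{b\}$ has the form $t(b, \ob{a})$ for some $\mc{L}$-term $t$ and tuple $\ob{a}$ from $A_0$. Applying Lemma~\ref{term}(1), I write
\[ t(b, \ob{a}) = \lambda b + s(\ob{a}) + q, \]
where $\lambda \in \Q$, $s(\ob{a}) \in A_0$, and $q = \sum_i \lambda_i \floor{s_i(b, \ob{a})}$ is a rational (a finite $\Q$-linear combination of integers). Since $\Q$ is closed under translation by rationals, $Q(t(b, \ob{a}))$ is equivalent to $Q(\lambda b + s(\ob{a}))$.

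Now I split into two cases. If $\lambda = 0$, then $Q(\lambda b + s(\ob{a})) = Q(s(\ob{a}))$ is a statement about $A_0$ alone, which is preserved by $f$ because $f$ is already an $\mc{L}$-embedding on $A_0$. If $\lambda \neq 0$, I claim the formula must fail: for if $\lambda b + s(\ob{a}) = q' \in \Q$, then $b = \lambda^{-1}(q' - s(\ob{a})) \in A_0$, contradicting $b \notin A_0$. An identical argument shows that $Q(\lambda b' + s(f(\ob{a})))$ also fails, because $\mc{L}_0$-elementarity of $b \mapsto b'$ propagates the inequalities $b \neq a$ (quantifier-free in $\mc{L}_0$) to $b' \neq f(a)$ for all $a \in A_0$, while $f(A_0)$ is itself an $\mc{L}$-substructure of $\mf{B}$ containing $\Q$. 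Thus $Q$ is preserved in all cases, and the extension is an $\mc{L}$-embedding. The main point requiring care is setting up the term-normal-form reduction via Lemma~\ref{term}(1) and correctly invoking the QE for $T_0$; the rest is the standard substructure-extension argument.
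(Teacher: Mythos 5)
There is a genuine gap, and it sits at the heart of the matter. Your argument treats the predicate $Q$ as if it were literally the set of rational numbers in every model of $T$: in the case $\lambda \neq 0$ you argue that $Q(\lambda b + s(\ob{a}))$ must fail because $\lambda b + s(\ob{a}) = q' \in \Q$ would force $b \in A_0$. But $Q$ is a unary predicate symbol, and in the (necessarily nonstandard, saturated) models $\mf{B}, \mf{B}'$ that the embedding test requires, $Q(B)$ is a proper dense and codense divisible subgroup of $B$ that strictly contains $\Q$ and has many cosets. So $Q(\lambda b + s(\ob{a}))$ can perfectly well hold with $\lambda \neq 0$ and $b \notin A_0$ — it just says $b$ lies in the coset $Q(B) - \tfrac{1}{\lambda}s(\ob{a})$. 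Your case split therefore proves nothing about preservation of $Q$, and the "identical argument" for $b'$ collapses for the same reason. (Your reduction of $Q(t(b,\ob{a}))$ to $Q(\lambda b + s(\ob{a}))$ via Lemma~\ref{term}(1) is fine, since $Z(B) \subseteq Q(B)$ and $Q(B)$ is a $\Q$-subspace, but that is only the easy first step.)

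The missing content — which is exactly what the paper's proof supplies — is that after choosing $b'$ with the same quantifier-free $\mc{L}_0$-type as $b$ over $\ob{a}$, one must further arrange that $b'$ lies in the \emph{same cosets of $Q$ relative to $A$} as $b$ does: the three cases are $b \in Z(B)$, $b \in Q(B)+a$ for some $a \in A$, and $b$ avoiding every coset $Q(B)+a$ with $a \in A$. Matching these requires a genuine density argument: by Lemma~\ref{defsets} every $\mc{L}_0$-formula in $\qftp_0(b/\ob{a})$ defines a set with nonempty interior, each coset of $Q$ is dense, and a finite union of complements of cosets is codense, so saturation of $\mf{B}'$ lets you find $b'$ in the right coset (or avoiding all of them) while still realizing $\qftp_0(b/\ob{a})$. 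Without this step the extension need not preserve $Q$, so your proof as written does not go through.
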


\begin{proof}  
 For notation we write $\qftp(a)$ for the quantifier free type of $a$ in $\mc{L}$ and $\qftp_0(a)$ be the quantifier free $\mc{L}_0$-type.

We use a standard embedding test.  Let $\mf{B}$ and $\mf{B}^{\prime}$ be models of $T$ with $\mf{B }^{\prime}$ $\omega$-saturated and suppose that $\mf{A}$ is a substructure of both $\mf{B}$ and $\mf{B}^{\prime}$ which is finitely generated by $\ob{a}$.  Note that as $T_0$ eliminates quantifiers and is universally axiomatizable we have that $\mf{A}\restriction_{ \mc{L}_0} \preceq \mf{B}\restriction _{\mc{L}_0}$ and $\mf{A}\restriction _{\mc{L}_0}\preceq \mf{B}^{\prime}\restriction _{\mc{L}_0}$.  For $b \in B \setminus A$ we 
must find $b^{\prime} \in B^{\prime}$ so that $\qftp(b^{\prime}/\ob{a})=\qftp(b/\ob{a})$.

First suppose that $b \in Z(B)$.  As $T_0$ eliminates quantifiers we may find $b^{\prime} \in \mf{B}^{\prime}$ so that $\qftp_0(b/\ob{a})=\qftp_0(b^{\prime}/\ob{a})$.  We claim that in fact $\qftp(b/\ob{a})=\qftp(b^{\prime}/\ob{a})$.  
To verify this we need only check that if $t(x,\ob{y})$ is a term then $\mf{B} \models Q(t(b,\ob{a}))$ if and only if $\mf{B}^{\prime} \models Q(t(b^{\prime},\ob{a}))$.  By Lemma \ref{term} $t(x,\ob{y})$ is of the form:
\[t(x,\ob{y})=\sum^m_{i=1}\lambda_i\floor{s_i(x,\ob{y})}+\lambda x + s(\ob{y}).\] 
Thus $\mf{B} \models Q(t(b,\ob{a}))$ if and only if $b+\frac{1}{\lambda}s(\ob{a}) \in Q(B)$.  But this holds if and only if $\frac{1}{\lambda}s(\ob{a}) \in Q(B)$ if and only if $\frac{1}{\lambda}s(\ob{a}) \in Q(\mf{B}^{\prime})$.  Which finally holds if and only if $\mf{B}^{\prime} \models Q(t(b^{\prime},\ob{a}))$.

Hence we may assume that $b \notin Z(B)$ and further more we may without loss of generality assume that $\floor{b} \in A$.  Thus note that by Lemma \ref{defsets} if $\varphi(x) \in \qftp_0(b/\ob{a})$ then $\varphi(B)$ has non-empty interior.
In particular if $a \in A$ the coset $a+Q(B)$ has non-trivial intersection with $\varphi(B)$ and the same holds in $\mf{B}^{\prime}$.

Thus if for some $a \in A$ it is the case that $\mf{B} \models Q(b+a)$  we may find $b^{\prime} \in B^{\prime}$ so that $b^{\prime}$ realizes $\qftp_0(b/\ob{a})$ and so that $\mf{B}^{\prime} \models Q(b^{\prime}+a)$.  We claim that $\qftp(b/\ob{a})=\qftp(b^{\prime}/\ob{a})$.  As above we are reduced to checking that $\mf{B} \models Q(\lambda b +c)$ if and only if $\mf{B}^{\prime} \models Q(\lambda b^{\prime} + c)$ where $c \in A$.  We have that 
$\lambda b + c \in Q(B)$ if and only if $b \in Q(B)-\frac{1}{\lambda}c$ but as $b \in Q(B)-a$ this holds if and only if $\frac{1}{\lambda}c-a \in Q(B)$.  The previous clause holds if and only if $\frac{1}{\lambda}c-a \in Q(B^{\prime})$ which holds if and only if $\mf{B}^{\prime} \models Q(\lambda b + c)$.

The final case is when $b \notin Q(B)+a$ for any $a \in A$.  The proof is much the same as the previous case simply using the fact that the union of finitely many complements of cosets of $Q(B)$ (or $(Q(B^{\prime})$) is codense in the line.

\end{proof}

So far in considering models of $T$ we have focused on their $\mc{L}_0$ reducts.  Now we need to consider the structure \[\mc{R}_1=\langle \R, +, <, 0,1,\Q, \lambda\rangle_{q \in \Q}.\]  Let $\mc{L}_1$ be its language and $T_1$ its theory.  Note that by \cite{opencore2} $T_1$ has quantifier elimination.  In particular if $\mf{B} \models T_1$ then any definable subset of $B$ is a finite union of points, interval, intervals intersected with a coset of $Q(B)$, or intervals intersected with the union of the  complements of finitely many cosets of $Q(B)$.   For convenience let as call these basic definable sets cells.  Note also that by results from \cite{densepair} if $\mf{B} \models T_1$ then the induced structure on $Q(B)$, and hence on any coset of $Q(B)$, is weakly o-minimal.  We have:

\begin{lem}\label{qdp}  $T_1$ has dp-rank $2$.
\end{lem}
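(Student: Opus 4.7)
The plan is to prove $\text{dp-rank}(T_1) = 2$ by establishing both the lower bound $\geq 2$ and the upper bound $\leq 2$ separately, using the quantifier elimination for $T_1$ and the dense-pair description of its unary definable sets.

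For the lower bound I would exhibit an explicit ict-pattern of depth $2$. Take the formulas $\varphi_0(x; y, z) \equiv y < x < z$ and $\varphi_1(x; w) \equiv Q(x - w)$. As the first row of parameters choose a sequence of pairwise disjoint open intervals $(y_0^i, y_1^i)$ for $i < \omega$; as the second row choose representatives $w_j$ for $j < \omega$ of pairwise distinct cosets of $\Q$. For any $\eta : 2 \to \omega$, density of each nontrivial coset of $\Q$ inside every nonempty open interval (and there being infinitely many such cosets meeting any interval) lets us realize membership in the prescribed interval and coset while simultaneously avoiding all the other intervals (automatic, since disjoint) and all the other cosets in the array. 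This witnesses the required depth-$2$ pattern, so $\text{dp-rank}(T_1) \geq 2$.

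For the upper bound, I would suppose toward contradiction that there is an ict-pattern $\langle \varphi_\alpha(x; \overline{y}_\alpha) : \alpha < 3 \rangle$ with witnesses $\{\overline{b}^\alpha_i : i < \omega\}$, row-wise indiscernible over the other rows. By the quantifier elimination established above, each instance $\varphi_\alpha(x; \overline{b}^\alpha_i)$ defines a finite Boolean combination of basic cells: open intervals, intersections of intervals with a single coset of $Q$, and intersections of intervals with the complement of finitely many such cosets. By passing to a subpattern one may assume each formula defines a single cell of uniform type. The key observation is that the two ``coordinates'' of a cell are orthogonal: the interval endpoints are $\mc{L}_{\text{om}}$-data (for the o-minimal reduct $\langle R; +, <, \lambda \rangle_{\lambda \in \Q}$), while the coset data lives in the weakly o-minimal structure induced on $Q(B)$ by the dense-pair results of \cite{densepair}. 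Both reducts are dp-minimal, and because the two kinds of conditions are orthogonal, the depth-$3$ pattern projects to sub-patterns in each reduct. Invoking sub-additivity of dp-rank (see \cite{additivity_dp_rank}) applied to the two-sorted data of $x$ in the o-minimal reduct together with its $\Q$-coset, one gets a total dp-rank bound of $1 + 1 = 2$, contradicting the assumed depth $3$.

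The main obstacle will be formalizing the ``orthogonal decomposition'' step cleanly, that is, verifying that after extracting indiscernibles, each row of the pattern contributes either pure order-theoretic variation or pure coset variation, and that these two kinds of contributions cannot interact to produce a third independent direction. This is exactly where the density of every coset of $\Q$ in every interval is used, together with indiscernibility, to show that an interval-type row cannot encode coset information and vice versa; the resulting decomposition is then fed into sub-additivity to conclude.
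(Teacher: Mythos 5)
Your lower bound is fine and is essentially the paper's: pairwise disjoint intervals against pairwise distinct cosets of $Q$, with density of each coset in each interval making every path consistent. The upper bound, however, has a genuine gap, concentrated exactly in the step you yourself flag as ``the main obstacle.'' Two concrete problems. First, you work directly with an ict-pattern of depth $3$ and then ``pass to a subpattern where each formula defines a single cell.'' That reduction is standard for inp-patterns (each positive instance is a finite union of cells and one disjunct per row suffices for $k$-inconsistency), but an ict-pattern also carries the negated instances $\neg\varphi_\alpha(x;\overline{b}^\alpha_i)$, and the complement of a union of cells is not a cell, so the reduction does not go through as stated. The paper avoids this by first noting that $T_1$ is NIP (by the dense-pair results), so dp-rank equals burden by \cite{adler_strong_dep}, and then ruling out an \emph{inp}-pattern of depth $3$; you should do the same.

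Second, the ``orthogonal decomposition plus sub-additivity'' step is not an argument as written. Sub-additivity in the form of Fact~\ref{coord} bounds the dp-rank of $\tp(a)$ by that of $\tp(a/\overline{b})$ plus that of $\tp(\overline{b})$ for an actual tuple $\overline{b}$; here the ``coset coordinate'' of $x$ is an imaginary in $R/Q(R)$ whose induced structure you never identify, and the weakly o-minimal structure induced on $Q(B)$ by \cite{densepair} governs the trace of definable sets on a \emph{fixed} coset, not the parametrization of which coset $x$ lies in. So the claimed bound $1+1=2$ is asserted rather than derived. What the paper actually does --- and what your sketch needs --- is a direct case analysis on a depth-$3$ pattern of cells: if some row consists of sets of the form $I\cap(Q(C)+c)$, fix one instance of that row; the remaining two rows then witness burden $\geq 2$ for the induced (weakly o-minimal, hence dp-minimal) structure on that single coset, a contradiction. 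Otherwise every row consists of intervals or of intervals minus finitely many cosets; since deleting finitely many cosets leaves a dense subset of the interval, the inconsistency of each such row is already witnessed by the interiors of the closures of its instances, so each row may be replaced by a row of genuine intervals, producing a depth-$3$ pattern of intervals and contradicting dp-minimality of the o-minimal reduct. That dichotomy is the precise content behind your phrase ``cannot interact to produce a third independent direction,'' and it is the part your proposal leaves unproved.
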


\begin{proof}  Note that by results from \cite{lovely} $T_1$ does not have the independence property thus by results from \cite{adler_strong_dep} it suffices to show that $T_1$ has burden $2$.  Fix $\C$ a saturated model of $T_1$.  It is trivial to see that the burden of $T_1$ is at least $2$ by simply considering disjoint intervals and cosets of $Q(C)$.  

Suppose for contradiction that there were an inp-pattern of depth $3$.  Thus we have formulae $\varphi_i(x, \ob{y})$
for $0 \leq i \leq 2$ and parameters $\ob{a}_i^j$ with $0 \leq i \leq 2$ and $j \in \omega$ witnessing this.  By results from \cite{DGL} we may assume that each $\varphi_i$ defines a cell.  First suppose that one of the formulae, say $\varphi_0$ defines an interval intersected with a coset.  Say that $\varphi_0(x, \ob{a}^0_0)$ defines $I \cap Q(C)+c$.  It follows that the other two rows of the inp-pattern would witness that the induced structure on $Q(C)+c$ would have burden at least $2$, contradicting that weakly o-minimal structures are dp-minimal (see \cite{DGL}).

Since not all of the formulae can define intervals we may assume that $\varphi_0$ defines an interval intersected with the complement of finitely many cosets.  For each $j \in \omega$ let $I^j$ be the interior of the closure of 
$\varphi_0(C, \ob{a}_0^j)$.  Note that in order for the collection $\{\varphi_0(x, \ob{a}_0^j): j \in \omega\}$ to be inconsistent the collection $\{I^j: j \in \omega\}$ must be inconsistent.  Hence in our inp-pattern we can replace the top row with one consisting entirely of intervals.  We can then do the same for the following two rows, obtaining an inp-pattern consisting only of intervals, a contradiction.
\end{proof}

Via quantifier elimination we also have an easy analogue of  part 1  from Lemma \ref{defsets}.

\begin{lem}\label{qdef} Let $\mf{B} \models T$ and suppose that $X \subseteq B$ is definable and $n \in Z(B)$.  Then $X \cap (n,n+1)$ is definable in $\mf{B} \restriction \mc{L}_1$.
\end{lem}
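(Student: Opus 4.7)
The plan is to combine the quantifier elimination for $T$ (just proved) with Lemma~\ref{term}(2) in order to reduce every atomic $\mc{L}$-formula to an $\mc{L}_1$-definable set once we have localized to $(n, n+1)$. First I would invoke quantifier elimination: the definable set $X$ is cut out by a quantifier-free $\mc{L}$-formula $\varphi(x, \ob{a})$ with $\ob{a} \subseteq B$. Since $\mc{L}_1$-definability is closed under Boolean combinations and finite unions, it suffices to treat the case where $\varphi$ is a single atomic formula of one of the four shapes $t_1 = t_2$, $t_1 < t_2$, $Z(t)$, or $Q(t)$.

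The key step is to apply Lemma~\ref{term}(2) to each term appearing in $\varphi$ and take a common refinement: this produces finitely many subintervals $I_1, \ldots, I_k$ of $(n, n+1)$, with endpoints in $B$ (hence $\mc{L}_1$-definable), such that on each $I_j$ every term $t(x, \ob{a})$ in $\varphi$ agrees with a linear expression $\lambda_{j,t}\, x + c_{j,t}$ with $\lambda_{j,t} \in \Q$ and $c_{j,t} \in B$, and each subterm $\floor{s(x, \ob{a})}$ is constant on $I_j$. It is then enough to show that the restriction of $\varphi$ to each $I_j$ is $\mc{L}_1$-definable.

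On each $I_j$, the cases $t_1 < t_2$ and $t_1 = t_2$ become linear (in)equalities with parameters in $B$ and are directly in $\mc{L}_1$. The case $Q(t)$ reduces on $I_j$ to the formula $Q(\lambda x + c)$, which is literally $\mc{L}_1$ since both $Q$ and the multiplication-by-$\lambda$ function are in $\mc{L}_1$. The only delicate case is $Z(t)$, since $Z \notin \mc{L}_1$: on $I_j$ this becomes $Z(\lambda x + c)$, and we distinguish $\lambda = 0$ (the set is either $I_j$ or empty, according to whether $c \in Z(B)$) and $\lambda \neq 0$. In the latter case, since $I_j$ has length less than $1$, the image $\lambda \cdot I_j + c$ is an interval of length at most $|\lambda|$, which contains at most $\lceil|\lambda|\rceil + 1$ elements of $Z(B)$ (a uniform bound coming from a universal sentence of $T_0$, valid in any model). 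Hence the solution set is a finite subset of $B$, which is trivially $\mc{L}_1$-definable as a disjunction of equations $x = e_i$ with the finitely many $e_i \in B$ used as parameters.

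The main obstacle is precisely the $Z$-atom case, since $Z$ is the predicate being dropped in passing to $\mc{L}_1$. The resolution is that localizing to the unit interval $(n, n+1)$ and applying the linearization of Lemma~\ref{term}(2) forces only finitely many witness points per $Z$-atom, and those witnesses are themselves in $B$, so they can be fed as parameters into a purely $\mc{L}_1$-formula. Once each atomic piece is handled, the Boolean combination that builds $\varphi$, together with the union over the finitely many $I_j$, yields the desired $\mc{L}_1$-definition of $X \cap (n, n+1)$.
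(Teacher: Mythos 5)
Your proposal is correct and follows essentially the same route as the paper: the paper's proof is a one-line reference to the argument for Lemma~\ref{defsets}(1), namely quantifier elimination for $T$ plus the observation that Lemma~\ref{term}(2) still holds in models of $T$, which is exactly the linearization-on-subintervals argument you spell out. Your explicit treatment of the $Z$-atoms (finitely many witnesses in an interval of bounded length, absorbed as parameters) and the $Q$-atoms (already $\mc{L}_1$) is precisely what the paper leaves implicit.
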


\begin{proof} As 1. from Lemma \ref{defsets}.  Just note that Lemma \ref{term} holds in models of $T$ as well as in models of $T_0$.
\end{proof}

In order to establish that $T$ has dp-rank $3$ we need a basic fact which follows from Proposition 4.20 in \cite{simonnip}.

\begin{fact}\label{coord}  Let $T$ be any theory and $\C$ a saturated model of $T$.  Let $a \in C$ and $\ob{b} \in C$.  The 
dp-rank of $\tp(a)$ is bounded by the sum of the dp-rank of $tp(\ob{b})$ and the dp-rank of $tp(a/\ob{b})$.
\end{fact}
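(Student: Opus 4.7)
The plan is to prove the subadditivity statement by contradiction, using the standard equivalent characterization of dp-rank in terms of mutually indiscernible sequences rather than working with ict-arrays directly. I would first invoke the familiar fact (proved by an Erd\H{o}s--Rado / array-extraction argument, and essentially due to Shelah and Adler): for any parameter set $C$ and tuple $c$, one has the dp-rank of $\tp(c/C)$ at least $\kappa$ if and only if there is a family $\langle I_\alpha : \alpha < \kappa\rangle$ of sequences of some sufficiently long fixed length which are mutually indiscernible over $C$ and such that no $I_\alpha$ is indiscernible over $Cc$. This reformulation lets us argue at the level of sequences rather than with single formulas.

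Set $\kappa_1 = \text{dp-rk}(\tp(\ob b))$ and $\kappa_2 = \text{dp-rk}(\tp(a/\ob b))$, and suppose toward a contradiction that $\text{dp-rk}(\tp(a)) \geq \kappa_1 + \kappa_2 + 1$. Fix mutually indiscernible sequences $\langle I_\alpha : \alpha < \kappa_1 + \kappa_2 + 1\rangle$ and a realization $a' \equiv a$ such that no $I_\alpha$ is indiscernible over $a'$. Using saturation of $\C$, pick $\ob b'$ with $a'\ob b' \equiv a\ob b$, so $\ob b' \equiv \ob b$ and $\tp(a'/\ob b') = \tp(a/\ob b)$. Now partition the indices as $B = \{\alpha : I_\alpha \text{ is not indiscernible over } \ob b'\}$ and $A = \{\alpha : I_\alpha \text{ is indiscernible over } \ob b'\}$. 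Should $|B| > \kappa_1$, the sub-family $\langle I_\alpha : \alpha \in B\rangle$ is mutually indiscernible and witnesses that the dp-rank of $\tp(\ob b')$ exceeds $\kappa_1$, contradicting $\tp(\ob b') = \tp(\ob b)$; hence $|A| \geq \kappa_2 + 1$.

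It remains to extract from $\langle I_\alpha : \alpha \in A\rangle$ a family $\langle J_\alpha : \alpha \in A\rangle$ that is mutually indiscernible \emph{over} $\ob b'$ while retaining the property that no $J_\alpha$ is indiscernible over $a'\ob b'$. The main obstacle is precisely this passage from ``mutual indiscernibility over $\emptyset$ with each sequence separately indiscernible over $\ob b'$'' to ``joint mutual indiscernibility over $\ob b'$'', without losing the non-indiscernibility over $a'$ that certifies each sequence's role in the pattern. I would handle this with an iterated Ramsey / array-extraction argument: start with each $I_\alpha$ very long, then extract (one coordinate at a time, or via a single array Ramsey application) a family $\langle J_\alpha\rangle$ that is mutually indiscernible over $\ob b'$ and realizes the appropriate EM-type over $\ob b'$ read off from the $I_\alpha$'s. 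The preservation of non-indiscernibility is enforced by a compactness / pigeonhole bookkeeping: each $I_\alpha$ fails to be indiscernible over $a'$ through finitely many formulas with finitely many parameters from the sequence, and by extracting $J_\alpha$'s cofinally from the original $I_\alpha$'s one can arrange that the same finite witnesses appear cofinally in $J_\alpha$, so that the corresponding $J_\alpha$ remains non-indiscernible over $a'\ob b'$.

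Once the extraction is carried out, $\langle J_\alpha : \alpha \in A\rangle$ is a family of $|A| \geq \kappa_2 + 1$ mutually indiscernible sequences over $\ob b'$ no one of which is indiscernible over $a'\ob b'$, so by the characterization recalled at the outset the dp-rank of $\tp(a'/\ob b')$ is strictly greater than $\kappa_2$; combined with $\tp(a'/\ob b') = \tp(a/\ob b)$ this contradicts the definition of $\kappa_2$ and completes the proof. The only genuinely delicate point is the extraction step in the third paragraph; everything else is a clean partition-and-count argument on the $\kappa_1 + \kappa_2 + 1$ sequences.
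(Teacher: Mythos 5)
You should first be aware that the paper does not actually prove this Fact: it is quoted, without proof, from Proposition 4.20 of Simon's book \cite{simonnip} (the subadditivity theorem of Kaplan--Onshuus--Usvyatsov), so your proposal is an attempted reproof of that cited theorem. Your overall architecture is exactly the skeleton of the known proof, and those parts are fine: the characterization of dp-rank via mutually indiscernible sequences, the choice of $\ob{b}'$ with $a'\ob{b}' \equiv a\ob{b}$, and the partition-and-count step (any subfamily of a mutually indiscernible family is mutually indiscernible, so indeed $|B| \leq \kappa_1$; and non-indiscernibility over $a'$ passes upward to non-indiscernibility over $a'\ob{b}'$, since indiscernibility over a larger set is the stronger condition).

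The genuine gap is your third paragraph, and it is not a ``delicate point'' to be handled by routine Ramsey bookkeeping --- it is the entire mathematical content of the theorem, and the mechanism you propose cannot work. If you extract a family $(J_\alpha)$ mutually indiscernible over $\ob{b}'$ by realizing the EM-type of the array over $\ob{b}'$ (the Erd\H{o}s--Rado route), then since $a'$ is not in the base, the extracted data records nothing about $a'$, so there is no reason any $J_\alpha$ remains non-indiscernible over $a'\ob{b}'$; extracting over the base $a'\ob{b}'$ instead produces sequences indiscernible over $a'\ob{b}'$, the opposite of what you need. Your fallback --- taking the $J_\alpha$ to be genuine subsequences of the $I_\alpha$ so that the finite witness configurations ``appear cofinally'' --- is not available in ZFC: partition theorems of Erd\H{o}s--Rado type yield realizations of EM-types, not actual indiscernible subsequences of a given long sequence (that requires large-cardinal-strength partition properties), and pinning prescribed witness pairs inside sequences that must come out mutually indiscernible over $\ob{b}'$ is exactly as hard as the problem you started with. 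The known proof threads this needle by avoiding extraction altogether: the key lemma (Lemma 4.19 in \cite{simonnip}, in essence due to Kaplan--Onshuus--Usvyatsov) shows that from a family mutually indiscernible over $A$ one may simply \emph{discard} at most dp-rank-of-$\tp(\ob{b}/A)$-many sequences so that the remaining ones, \emph{unchanged}, are already mutually indiscernible over $A\ob{b}$; because the surviving rows are the original sequences, their non-indiscernibility over $a'$ is preserved for free, and your counting argument then closes the proof. That lemma is established by a genuinely different inductive combinatorial argument, and without it (or an equivalent) your sketch does not go through.
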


\noindent {\bf Proof of Proposition \ref{zq}:}  Let $\C$ be a large saturated model of $T$.  First we show that there is an inp-pattern of depth three.  Let $\{a_i : i \in \omega\}$ be distinct elements of $Z(\mf{C})$ and let $\varphi_0(x,a_i) := ``x \in (a_i, a_i+\frac{1}{2})"$.  Note of course that the $\varphi_0(x,a_i)$ are pairwise inconsistent.  
Next pick pairwise disjoint open intervals $\{(l_i,r_i) : i \in \omega\}$ each contained in $(0,\frac{1}{2})$.  Let $\varphi_1(x,l_ir_i) :=``x-\floor{x} \in (l_i,r_i)"$.   Once again the $\varphi_1(x,l_ir_i)$ are pairwise inconsistent and if $i,j \in \omega$ then 
$\varphi_0(x,a_i) \wedge \varphi_1(x, l_jr_j)$ defines a non-empty open set.  Finally 
pick $c_i \in \C$ so that $Q(\mf{C}) + c_i$ are distinct cosets and let $\varphi_2(x,c_i) :=``x \in Q+c_i"$.  It is now easy to see that the $\varphi_0(x,a_i), \varphi_1(x,l_ir_i)$, and $\varphi_2(x,c_i)$ form an inp-pattern of depth three.

  Let $c \in C$.     We must show that $\tp(c)$ has dp-rank at most $3$.  First consider $\floor{c}$ by Part 2 of Lemma \ref{defsets} and the dp-minimality of Presburger arithmetic (see \cite{conv-val}), $\tp(\floor{c})$ has dp-rank at most $1$.  Now by Lemmas \ref{qdp} and \ref{qdef} $\tp(c/\floor{c})$ has rank at most $2$.  Hence by Fact \ref{coord} $\tp(c)$ has rank at most $3$. \qed

\smallskip

Notice that essentially an identical proof establishes the fact that $T_0$ has dp-rank $2$.

\subsection{DOAGs with dense graphs}

In this subsection we show that there are definably complete structures of finite dp-rank in which there are definable unary functions whose graphs are dense in the plane.  This situation was considered in \cite{opencore2} where the absence of such functions is shown to imply a good degree of tractability in structures of o-minimal open core.  Thus our examples indicate that even under the assumption of finite dp-rank we can not hope that definable functions are quite so tractable.  In this subsection we also construct structures of dp-rank equal to $n$ for any  $n \geq 1$.

Let $H$ be a Hamel basis for the reals over the rationals.  For $h^* \in H$ let $\pi_{h^*}$ be projection on $h$ namely the function that for 
$a=\Sigma_{h \in H}a_h \cdot  h$ in $\R$ maps $a$ to $a_h \cdot h$.  Fix $h_1 \dots, h_n \in H$ and for convenience denote $\pi_{h_n}$ by $\pi_n$.  Let 
$\mf{M}_n$ be the structure $\langle \R, +,0,1, <, \pi_1, \dots, \pi_n, \lambda \rangle_{\lambda \in \Q}$  where the $\lambda$'s are unary functions for multiplication by $\lambda$.  Let $\mc{L}$ be the language in which this structure is presented and let $\mc{L}_0$ be $\mc{L}$ without the $\pi_i$'s.  Let $T_n=Th(\mf{M}_n)$.

We begin by noting that the functions $\pi_h$ are "wild":

\begin{lem}\label{densegraph}  The graph of $\pi_h: \R \to \R$ is dense in $\R^2$.
\end{lem}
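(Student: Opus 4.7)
The plan is to show that for any $(x_0, y_0) \in \R^2$ and any $\varepsilon > 0$ there exists $z \in \R$ with $|z - x_0| < \varepsilon$ and $|\pi_h(z) - y_0| < \varepsilon$. The basic idea exploits two independent density phenomena: the image $\pi_h(\R)$ equals $\Q \cdot h$ (hence is dense in $\R$), while perturbing $z$ by a rational multiple of any other Hamel basis element is invisible to $\pi_h$. So the strategy is to first fix the ``height'' of $z$ in the $\pi_h$ direction by choosing a rational coefficient of $h$, and then slide $z$ back toward $x_0$ along a different basis direction without disturbing $\pi_h(z)$.

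First I would use density of $\Q h$ in $\R$ (which holds since $h \neq 0$) to pick $q \in \Q$ with $|qh - y_0| < \varepsilon$. This $q$ is designed to be the $h$-coefficient of the $z$ we will build; then $\pi_h(z)$ will automatically equal $qh$ and be within $\varepsilon$ of $y_0$.

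Next, write $x_0 = a_h h + r$ where $a_h \in \Q$ and $r$ lies in the $\Q$-span of $H \setminus \{h\}$. Since $H$ is uncountable, fix some $h' \in H \setminus \{h\}$. I would set $z := qh + r + q' h'$ for a rational $q'$ to be chosen; by construction the $h$-coefficient of $z$ in its Hamel basis expansion is exactly $q$, so $\pi_h(z) = qh$ as desired. It remains to control $|z - x_0|$. Since $z - x_0 = (q - a_h)h + q' h'$ and $\Q h'$ is dense in $\R$ (because $h' \neq 0$), I can pick $q' \in \Q$ so that $|(q - a_h)h + q' h'| < \varepsilon$, which finishes the argument.

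No serious obstacle is expected here: the proof is a two-step application of density, and it uses only that $H$ has more than one element so an auxiliary $h' \neq h$ is available, which is automatic as $|H| = 2^{\aleph_0}$.
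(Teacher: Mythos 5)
Your proof is correct, and it is in fact more careful than the paper's, which simply asserts that the lemma ``follows easily from the density of $H$.'' A Hamel basis need not be dense in $\R$ (one can rescale each basis element by a rational to place it in $[1,2]$), so the paper's stated justification is not automatic; your argument sidesteps this entirely by using only the density of $\Q h$ and $\Q h'$ in $\R$ for nonzero $h, h'$, which always holds. The key structural point --- that the $h$-coefficient of $z$ can be prescribed independently of adjustments made along a second basis direction $h' \neq h$, so that $\pi_h(z)$ and $z$ itself can be controlled separately --- is exactly what any correct proof must exploit, and you have verified it cleanly: $z = qh + r + q'h'$ has $h$-coefficient $q$ because $r + q'h'$ lies in the $\Q$-span of $H \setminus \{h\}$, and $q'$ can then be tuned to make $|z - x_0| < \varepsilon$. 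No gaps.
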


\begin{proof}  This follows easily from the density of $H$.
\end{proof}

Nonetheless the theories $T_n$ are well-behaved.

\begin{prop} $T_n$ eliminates quantifiers.
\end{prop}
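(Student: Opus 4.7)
The proof follows the template of the quantifier-elimination argument for $T$ in Section~3.1, combining the known quantifier elimination (and universal axiomatizability) of the $\mc{L}_0$-reduct theory with a new ingredient: a normal form for $\mc{L}$-terms that exploits the algebraic identities satisfied by the projections $\pi_i$.

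First, I would prove the following normal-form lemma. Each $\pi_i$ is $\Q$-linear, idempotent, and satisfies $\pi_j \circ \pi_i = 0$ whenever $j \neq i$ (all consequences of the interpretation of $\pi_1, \ldots, \pi_n$ as projections onto distinct elements of a Hamel basis; one may also arrange $\pi_i(1) = 0$ by choosing the $h_i$ so that none is a rational multiple of $1$). Using these identities, an induction on term complexity shows that every $\mc{L}$-term $t(x, \ob{y})$ can be rewritten modulo $T_n$ in the form $t(x, \ob{y}) = \lambda x + \sum_{i=1}^n \lambda_i \, \pi_i(x) + s(\ob{y})$, with $\lambda, \lambda_i \in \Q$ and $s(\ob{y})$ an $\mc{L}$-term not involving $x$: push each application of $\pi_i$ inward via linearity, and collapse nested projections via $\pi_i^2 = \pi_i$ and $\pi_j \pi_i = 0$. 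A key consequence is that for any $b$ in $\mf{B} \models T_n$ and parameters $\ob{a}$ from a substructure $\mf{A} \subseteq \mf{B}$, the quantifier-free $\mc{L}$-type $\qftp(b/\ob{a})$ is entirely determined by the quantifier-free $\mc{L}_0$-type of the tuple $(b, \pi_1(b), \ldots, \pi_n(b))$ over $\ob{a}$.

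Second, I would apply the standard embedding test exactly as in the proof for $T$. Take $\mf{B}, \mf{B}' \models T_n$ with $\mf{B}'$ $\omega$-saturated and $\mf{A}$ a common finitely-generated substructure. Since $T_0$ has QE and is universally axiomatizable, $\mf{A}\restriction \mc{L}_0 \preceq \mf{B}\restriction \mc{L}_0$ and $\mf{A}\restriction \mc{L}_0 \preceq \mf{B}'\restriction \mc{L}_0$. Given $b \in B \setminus A$, decompose $b = b_0 + p_1 + \cdots + p_n$ with $p_i = \pi_i(b) \in \mathrm{Im}(\pi_i^{\mf{B}})$ and $b_0 = b - \sum_i p_i \in \bigcap_i \ker(\pi_i^{\mf{B}})$. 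Using $\omega$-saturation of $\mf{B}'$, choose $p_1',\ldots,p_n'$ with $p_i' \in \mathrm{Im}(\pi_i^{\mf{B}'})$ realizing the $\mc{L}_0$-qftype of $(p_1,\ldots,p_n)$ over $\ob{a}$, and then $b_0' \in \bigcap_i \ker(\pi_i^{\mf{B}'})$ realizing the $\mc{L}_0$-qftype of $b_0$ over $\ob{a} \cup \{p_1',\ldots,p_n'\}$. Set $b' = b_0' + p_1' + \cdots + p_n'$. By $\Q$-linearity and orthogonality of the projections, $\pi_i^{\mf{B}'}(b') = p_i'$, whence $(b', \pi_1(b'), \ldots, \pi_n(b'))$ has the same $\mc{L}_0$-qftype over $\ob{a}$ as $(b, p_1, \ldots, p_n)$, and the normal-form step lifts this to $\qftp^{\mc{L}}(b'/\ob{a}) = \qftp^{\mc{L}}(b/\ob{a})$, completing the embedding test.

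The main obstacle is verifying that the partial $\mc{L}_0$-qftypes one wishes to realize in $\mf{B}'$ are in fact consistent. For each $p_i'$, the type specifies an element of the 1-dimensional $\Q$-subspace $\mathrm{Im}(\pi_i^{\mf{B}'})$ with prescribed order relations to $\pi_i[A]$; for $b_0'$, it specifies an element of the complementary subspace $\bigcap_i \ker(\pi_i^{\mf{B}'})$ with prescribed order relations to $A \cup \{p_1', \ldots, p_n'\}$. Handling this requires that each such subspace be order-theoretically dense and codense and appropriately \emph{independent} from the relevant parameters---properties which hold in $\mf{M}_n$ because $h_1, \ldots, h_n$ belong to a Hamel basis of $\R$ over $\Q$, making $\mathrm{Im}(\pi_i) = \Q h_i$ and $\bigcap_i \ker(\pi_i) = \Span_{\Q}(H \setminus \{h_1,\ldots,h_n\})$ dense and codense in $\R$. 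These density and independence statements must be encoded in an explicit axiomatization of $T_n$, after which saturation supplies the required elements; the role played here is analogous to that of the density and codensity of $\Q$ in the QE proof for $\Th(\langle \R; +, <, \Z, \Q \rangle)$ immediately preceding this proposition.
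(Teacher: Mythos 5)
Your proof is correct in outline, but it takes a genuinely different route from the paper's. The paper proves quantifier elimination for $T_n$ \emph{syntactically}: after establishing the same term normal form $\lambda_1\pi_1(x)+\dots+\lambda_n\pi_n(x)+\mu x+s(\ob{y})$, it replaces the quantified variable $x$ by component variables $x_1,\dots,x_{n+1}$ constrained by $\pi_r(x_r)=x_r$ and $\pi_r(x_{n+1})=0$, and then eliminates these one at a time via the dichotomy ``either the fiber has nonempty interior (in which case density of the relevant subspace makes the quantifier vacuous) or the variable is pinned down by a $\Q$-linear equation (in which case one substitutes).'' You instead run the embedding test, modelled on the paper's QE proof for $\Th(\langle\R;+,<,\Z,\Q\rangle)$, decomposing $b=b_0+p_1+\dots+p_n$ and realizing the pieces in the saturated model. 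Both arguments pivot on exactly the same two facts --- the term normal form coming from $\Q$-linearity, idempotence, and orthogonality of the $\pi_i$, and the density of each $\mathrm{Im}(\pi_i)$ and of $\bigcap_i\ker(\pi_i)$ --- so the difference is one of packaging: the paper's version produces an explicit quantifier-free equivalent formula by formula, while yours requires a more careful analysis of substructures and the consistency of the partial types (which you rightly flag as the crux; note that you do not actually need an ``explicit axiomatization,'' since the density and independence statements are first-order and true in $\mf{M}_n$, hence hold in every model of $T_n=\Th(\mf{M}_n)$, and nonemptiness of the relevant intervals transfers between $\mf{B}$ and $\mf{B}'$ through the common substructure $\mf{A}$). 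Two small points to tighten: your parenthetical about forcing $\pi_i(1)=0$ by taking $h_i$ not a rational multiple of $1$ is not quite what is needed (you would need $h_i$ not to occur in the Hamel expansion of $1$; in any case $\pi_i(1)$ is just a parameter-free constant that can be absorbed into $s(\ob{y})$, so nothing hinges on it), and in the equality case of the type of $b_0$ over $\ob{a}\,p_1'\cdots p_n'$ you should verify explicitly that the forced value of $b_0'$ still lies in $\bigcap_i\ker(\pi_i)$ --- this does work out, using $\pi_j(b_0)=0$ and the fact that $\pi_j(s(\ob{a}))\in A$, but it is a step your sketch skips.
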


\begin{proof}  First note that any term $t$ considered to be in a variable $x$  is equivalent to a term of the form:
\[\lambda_1\pi_1(x) +\ \dots + \lambda_n\pi_n(x)+\mu x + s\] where the $\lambda_i$ and $\mu$ are rational numbers and $s$ is an $\mc{L}$-term that does not involve $x$.

Thus we can reduce to the case where we need to eliminate the existential quantifier from a formula of the form:

\[\exists x (\bigwedge_{i=1}^k \sum_{r=1}^n \lambda^i_r\pi_r(x)+\mu^i x < s^i \wedge \bigwedge_{j=1}^l  \sum_{r=1}^n\lambda^j_r\pi_r(x)+\mu^j x = s^j)\]

But this is equivalent to:

\begin{align*}
\exists x_1 \dots x_{n+1}( \bigwedge_{r=1}^n&(\pi_r(x_r)=x_r \wedge \pi_r(x_{n+1})=0) \\  \wedge   \bigwedge_{i=1}^k&\sum_{r=1}^n \lambda^i_rx_r + \mu^i(x_1 + \dots + x_{n+1}) < s^i \\  \wedge \bigwedge_{j=1}^l& \sum_{r=1}^n\lambda^j_rx_r+\mu^j(x_1+ \dots +x_{n+1})=s^j).
\end{align*}

Thus we are reduced to eliminating the quantifiers from formulae of the form:
\begin{align*}
\exists x_1\dots x_{n+1}(\bigwedge_{r=1}^n&(\pi_r(x_r)=x_r \wedge \pi_r(x_{n+1})=0)\\ \wedge&\varphi(x_1, \dots, x_{n+1},s_1(\ob{y}), \dots, s_l(\ob{y})))
\end{align*}
where $\varphi(\ob{x},\ob{z})$ is a quantifier free formula in the language $\mc{L}_0$ and the $s_i$ are $\mc{L}$-terms.  We do this by serially eliminating the quantifiers beginning with $\exists x_{n+1}$.  Thus consider the formula:
\begin{equation*}\exists x_{n+1}(\bigwedge_{r=1}^n(\pi_r(x_r)=x_r \wedge \pi_r(x_{n+1})=0) \wedge \varphi(\ob{x},\ob{s}(\ob{y}))).\tag{*}\end{equation*} (Here we write $\ob{s}(\ob{y})$ for $s_1(\ob{y}), \dots, s_l(\ob{y})$.)
Let $\psi(x_1, \dots, x_n, \ob{z})$ be the quantifier-free $\mc{L}_0$-formula expressing 
\[``\varphi(\ob{x},-,\ob{z})\text{ has non-empty interior}".\]  Note that there are $N \in \N$ and  $\mu^j_i \in \Q$ for $i \in \{1, \dots, n\}$ and $j \leq N$ and $\mc{L}$-terms $k_1(\ob{z}), \dots, k_N(\ob{z})$ so that if $\mf{M} \models T_n$ and 
$\mf{M} \models \neg \psi(a_1, \dots, a_n, s_1(\ob{b}), \dots,  s_l(\ob{b}))$ then 
$\mf{M} \models a_{n+1} = \mu^j_1 a_1 + \dots \mu^j_n a_n +k_j(\ob{b})$ for 
some $j \leq N$.

Hence we can reduce to the case where either $\varphi(\ob{x},\ob{s}(\ob{y}))$ implies that 
$\varphi(x_1, \dots, x_n, - , \ob{s}(\ob{y}))$ has interior or $\varphi(\ob{x},\ob{s}(\ob{y}))$ is of the form $x_{n+1}=\mu_1x_1 +\dots + \mu_nx_n +t(\ob{y})$ where $t$ is an $\mc{L}$-term.  

In the first case as the set of all $x_{n+1}$ so that 
$\pi_i(x_{n+1})=0$ for all $1 \leq i \leq n$ is dense in the line the formula 
(*) is always consistent and hence equivalent to 
\[\bigwedge_{r=1}^n(\pi_r(x_r)=x_r) \wedge \psi(\ob{x},\ob{s}(\ob{y})).\]

In the latter case (*) is equivalent to 
\[\bigwedge_{r=1}^n(\pi_r(x_r)=x_r \wedge \mu_rx_r+\pi_{r}(t(\ob{y})))=0.\]

In either case we have reduced to eliminating the quantifiers from a formula of the form:
\[\exists x_1 \dots x_n (\bigwedge_{r=1}^n(\pi_r(x_r)=x_r) \wedge \sigma(x_1, \dots, x_n, t_1(\ob{y}), \dots, t_k(\ob{y}))).\]  Where $\sigma(\ob{x}, \ob{z})$ is an $\mc{L}_0$ formula and the $t_i$'s are $\mc{L}$-terms.  We can now eliminate the variables $x_n$ through $x_1$ serially by essentially the identical method we used to eliminate $x_{n+1}$ noting that for all $1 \leq j \leq n$ the 
set $X_j=\{a: \pi_j(b)=a \text{ for some } b\}$ is dense in the line.
\end{proof}

Before we can establish the dp-rank of $T_n$ we need a basic lemma whose proof is straightforward.

\begin{lem}\label{indh} Let $\C \models T_n$.  \begin{enumerate}
\item Let $a_1 \dots a_n \in C$.  The induced structure on the set \[F(\ob{a})=\{x : \bigwedge_{i=1}^n\pi_i(x)=a_i\}\] is weakly o-minimal.
\item For each $i$ the induced structure on the set \[V_i=\{x : \pi_i(y)=x \text{ for some } y\}\] is weakly o-minimal.

\end{enumerate}
\end{lem}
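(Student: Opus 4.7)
The plan is to leverage the quantifier elimination just established for $T_n$, together with the normal form for $\mc{L}$-terms recorded at the start of that proof: every term $t(x, \ob{y})$ is equivalent to one of the form $\lambda_1 \pi_1(x) + \ldots + \lambda_n \pi_n(x) + \mu x + s(\ob{y})$, with $\lambda_1, \ldots, \lambda_n, \mu \in \Q$ and $s(\ob{y})$ a term not involving $x$. By QE, then, every $\mc{L}$-definable subset of $C$ in one free variable $x$ is a Boolean combination of sets cut out by (in)equalities between such terms.

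For part (1), fix $\ob{a} \in C^n$ with $F(\ob{a})$ nonempty, and let $Y \subseteq F(\ob{a})$ be definable in the induced structure; so $Y = X \cap F(\ob{a})$ for some $\mc{L}$-definable $X \subseteq C$ with parameters $\ob{b} \in C$. For every $x \in F(\ob{a})$ we have $\pi_i(x) = a_i$, so each term $t(x, \ob{b})$ in the normal form above collapses on $F(\ob{a})$ to $\mu x + c$, where $c = \lambda_1 a_1 + \ldots + \lambda_n a_n + s(\ob{b}) \in C$. Hence $Y$ is a Boolean combination of sets cut out on $F(\ob{a})$ by conditions of the form $\mu x + c = 0$, $\mu x + c > 0$, or $\mu x + c < 0$. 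Each such condition defines a convex subset of the linearly ordered set $F(\ob{a})$ (a singleton or $\emptyset$ in the equality case; a half-line, all of $F(\ob{a})$, or $\emptyset$ in the inequality case), and any Boolean combination of finitely many convex subsets of a linearly ordered set is a finite union of convex sets. This yields weak o-minimality of the induced structure on $F(\ob{a})$.

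For part (2), fix $i$ and note that $V_i$ is just the $\Q$-span of $h_i$ in $\R$. For $x \in V_i$ we have $\pi_i(x) = x$ and $\pi_j(x) = 0$ for $j \neq i$, so any term $t(x, \ob{b})$ in the normal form collapses on $V_i$ to $(\lambda_i + \mu) x + s(\ob{b})$: again a linear function in $x$ with rational coefficient and parameter in $C$. The argument of part (1) then applies verbatim and yields weak o-minimality of the induced structure on $V_i$.

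There is no serious technical obstacle here beyond unpacking QE; the only subtlety worth flagging is that the zeros $-c/\mu$ of the simplified linear terms need not themselves lie in $F(\ob{a})$ (respectively, $V_i$), so the convex pieces obtained are in general determined by cuts in the ambient ordering rather than by definable endpoints. This is precisely why the conclusion is weak, rather than full, o-minimality.
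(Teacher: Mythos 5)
Your proof is correct: the paper omits the argument entirely, declaring it "straightforward," and your route — quantifier elimination plus the term normal form $\lambda_1\pi_1(x)+\cdots+\lambda_n\pi_n(x)+\mu x + s(\ob{y})$, which collapses to an affine function $\mu x + c$ (resp.\ $(\lambda_i+\mu)x+c$) on $F(\ob{a})$ (resp.\ $V_i$), so that traces of definable sets are Boolean combinations of convex sets — is exactly the intended one. The only nitpick is that in the equality case with $\mu=0$ and $c=0$ the set is all of $F(\ob{a})$ rather than a singleton or $\emptyset$, which is harmless; your closing remark about why one only gets \emph{weak} o-minimality is apt.
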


\begin{prop} 
\label{dense_graphs}
$T_n$ has dp-rank $n+1$.
\end{prop}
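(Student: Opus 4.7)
My plan is to prove $\text{dp-rank}(T_n) = n+1$ by establishing two matching inequalities: an explicit ict-pattern of depth $n+1$ for the lower bound, and iterated subadditivity of dp-rank (Fact \ref{coord}) for the upper bound, leveraging the dp-minimality of the slices $V_i$ and the fibers $F(\ob{a})$ provided by Lemma \ref{indh}.

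For the lower bound, I would construct an ict-pattern of depth $n+1$ in the single variable $x$ as follows. For each $i \in \{1, \dots, n\}$, pick pairwise disjoint nonempty open intervals $(l^i_j, r^i_j) \subseteq V_i$ for $j \in \omega$ (possible since $V_i$ is densely ordered and unbounded), and set $\varphi_i(x; y, z) := y < \pi_i(x) < z$. For the final row, pick pairwise disjoint nonempty open intervals $(L_k, R_k)$ in $\C$ and set $\varphi_{n+1}(x; y, z) := y < x < z$. The instances within each row are pairwise inconsistent, so the negation clauses in the ict-pattern definition are automatic, and it suffices to show that for every $\eta : \{1, \dots, n+1\} \to \omega$ the conjunction $\bigwedge_\alpha \varphi_\alpha(x; \cdot_{\eta(\alpha)})$ is satisfiable. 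Picking any $c_i \in (l^i_{\eta(i)}, r^i_{\eta(i)})$, the fiber $F(\ob{c}) = \{x : \pi_1(x) = c_1, \dots, \pi_n(x) = c_n\}$ is a coset of the definable subgroup $K = \bigcap_{i=1}^n \ker \pi_i$. In the standard model, $K$ contains the $\Q$-span of infinitely many Hamel basis elements outside $\{h_1, \dots, h_n\}$ and is therefore dense in $\R$; density is first-order, so by elementarity the coset $F(\ob{c})$ is dense in $\C$ as well. Thus $F(\ob{c}) \cap (L_{\eta(n+1)}, R_{\eta(n+1)}) \neq \emptyset$, providing the required realizer.

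For the upper bound, fix any $a \in \C$ and set $\ob{b} = (\pi_1(a), \dots, \pi_n(a))$. By Fact \ref{coord},
$$\text{dp-rank}(\tp(a)) \leq \text{dp-rank}(\tp(\ob{b})) + \text{dp-rank}(\tp(a/\ob{b})).$$
The second summand is at most $1$ because $a$ lies in the fiber $F(\ob{b})$ whose induced structure is weakly o-minimal by Lemma \ref{indh}(1), and weakly o-minimal theories are dp-minimal (as cited earlier from \cite{DGL}). Iterating subadditivity of dp-rank for tuples (from the results of \cite{additivity_dp_rank}) yields
$$\text{dp-rank}(\tp(\ob{b})) \leq \sum_{i=1}^n \text{dp-rank}(\tp(\pi_i(a)/\pi_1(a), \dots, \pi_{i-1}(a))) \leq n,$$
since each $\pi_i(a)$ belongs to $V_i$ and Lemma \ref{indh}(2) makes the induced structure on $V_i$ weakly o-minimal, hence dp-minimal. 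Adding the two bounds gives $n+1$.

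The main obstacle is the consistency verification for the ict-pattern, which depends on the density of both $V_i$ and the kernel $K$ in the saturated model $\C$; both facts reduce to first-order statements true in $\mf{M}_n$ (using that $H \setminus \{h_1, \dots, h_n\}$ is still uncountable and hence spans a dense $\Q$-subspace), and then transfer by elementarity.
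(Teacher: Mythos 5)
Your proof is correct and follows essentially the same route as the paper: an explicit ict-pattern of depth $n+1$ built from the density of the projections (the paper invokes Lemma~\ref{densegraph} and leaves the pattern as "easy," whereas you spell out the joint consistency via density of the cosets of $\bigcap_i \ker \pi_i$), and the matching upper bound via Fact~\ref{coord}, subadditivity, and the weak o-minimality of the fibers $F(\ob{a})$ and the sets $V_i$ from Lemma~\ref{indh}. The only quibble is the phrase "open intervals $(l^i_j, r^i_j) \subseteq V_i$" --- $V_i$ contains no intervals, but since you only need each interval to meet the dense set $V_i$, the argument is unaffected.
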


\begin{proof}  Fix $\C \models T$.  By Lemma \ref{densegraph}, for each $1 \leq i \leq n$ the graph of $\pi_i$ is dense in the plane.  This fact allows us to easily construct an ict-pattern of depth $n+1$.  We must show that for $c \in C$ that $\tp(c)$ has dp-rank at most $n+1$.  First note that $\tp(c/\pi_1(c) \dots \pi_n(c))\vdash c \in F(\pi_1(c)\dots\pi_n(c))$.  Thus by Lemma \ref{indh} and the dp-minimality of weakly o-minimal structures $\tp(c/\pi_1(c) \dots \pi_n(c))$ has dp-rank at most $1$.  Furthermore $\tp(\pi_i(c))$ has dp-rank at most $1$ for each $1 \leq i \leq n$ by Lemma \ref{indh}.  By the subadditivity of dp-rank (see 
\cite{additivity_dp_rank}) we have that $\tp(\pi_1(c) \dots \pi_n(c))$ has rank at most $n$.  Hence by Fact \ref{coord} $\tp(c)$ has dp-rank at most $n+1$.

\end{proof} 

\subsection{Reducts of Tame Pairs}
In this section we consider an example which is the reduct of a tame pair of real closed fields (as studied in \cite{t-conv}) and show it has dp-rank 2.  Our motivation here is twofold.  First Corollary \ref{nowheredense} establishes that in a strong theory there can not be a definable infinite discrete set with an accumulation point, the current example shows that it is possible to have an infinite definable discrete set in a theory of finite inp-rank which accumulates to a bounded cut.  This also highlights the strength of the  definable completeness assumption.  Secondly Theorem \ref{discrete_R} applies to Archimedean structures of finite inp-rank and shows that discrete definable sets are approximately arithmetic progressions indexed by $\N$, our current example demonstrates that without the Archimedean assumption this fails, namely we have a discrete definable set on which the induced ordering is dense.

Let $R$ be a real closed field which is a proper elementary extension of $\R$.  We consider the following structure:

\[\mc{R}=\langle R, +, <, P, V, st,0,1, \lambda\rangle_{\lambda \in \Q}\]
 where $P$ is a unary predicate for the real numbers, $V$ is a unary predicate for the convex hull of the real numbers, $st$ is the standard part map, and the $\lambda$'s are unary functions for multiplication by $\lambda$.  For convenience we set $st(x)=x$ if $x \notin V$.  Notice that this is simply the additive reduct of a tame pair of real closed fields as studied in \cite{t-conv}.  Let $T=\Th(\mc{R})$.

We need a quantifier elimination result, which though a-priori weaker than those in \cite{t-conv} does not appear to immediately follow from them.

\begin{prop}  $T$ has quantifier elimination.
\end{prop}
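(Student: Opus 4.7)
The plan is to run a standard embedding test for QE. Fix $\mf{B},\mf{B}'\models T$ with $\mf{B}'$ sufficiently saturated, and a common finitely-generated $\mc{L}$-substructure $\mf{A}\subseteq\mf{B},\mf{B}'$; given $b\in B\setminus A$, we have to exhibit $b'\in B'$ with $\qftp(b/A)=\qftp(b'/A)$. Before anything else, I would establish a normal form for terms: using $st\circ st=st$, the $\Q$-linearity of $st$ on $V$, and the fact that $st$ is the identity on the complement of $V$, every $\mc{L}$-term in the variable $x$ over parameters $\ob{y}$ can be rewritten (modulo $T$, after a case-split on whether relevant sub-terms lie in $V$) as
\[
t(x,\ob{y})=\alpha x+\sum_{i=1}^{k}\gamma_i\,st(\alpha_i x+s_i(\ob{y}))+s_0(\ob{y}),
\]
where $\alpha,\alpha_i,\gamma_i\in\Q$ and $s_0,s_i$ are terms in $\ob{y}$ alone. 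Consequently $\qftp(b/A)$ is pinned down by: (i) the truth values of $V(\alpha b+a)$ and $P(\alpha b+a)$ for $\alpha\in\Q$ and $a\in A$; (ii) the cut of $st(\alpha b+a)$ over $P(A)$ whenever $\alpha b+a\in V$; and (iii) the cut of $b$ itself in $(B,<)$ over $A$.

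Next I would split into three cases according to the position of $b$. In Case (a), $b\in P(B)$: here $\alpha b+a\in V$ for all $\alpha,a$ and $st(\alpha b+a)=\alpha b+a$, so the data reduces to the cut of $b$ over $P(A)$ in the divisible ordered $\Q$-vector space $P(B)$; this cut is realized in $P(B')$ by QE for DOAG together with saturation. In Case (b), $b\in V(B)\setminus P(B)$: write $b=s+\varepsilon$ with $s=st(b)\in P(B)$ and $\varepsilon$ infinitesimal. Apply Case (a) to obtain $s'\in P(B')$ realizing $\qftp(s/A)$, then use saturation inside the infinitesimal ideal of $\mf{B}'$ to find an infinitesimal $\varepsilon'$ realizing the right cut over the infinitesimals of $A$, and put $b'=s'+\varepsilon'$. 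In Case (c), $b\notin V(B)$: first realize the cut of $b+V(B)$ in the divisible ordered $\Q$-vector space $B/V(B)$ over $A/(A\cap V(B))$, producing a candidate $b'$ modulo $V(B')$; then adjust $b'$ by an element of $V(B')$ so that for each $(\alpha,a)$ with $\alpha b+a\in V(B)$ (there are only finitely many $\alpha\in\Q$ giving such a constraint, up to $\Q$-scaling), the requisite $st(\alpha b'+a)$ realizes the correct cut in $P(B')$, which again is possible by saturation and Case (a).

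The main obstacle I expect is verifying in Cases (b) and (c) that the \emph{family} of constraints on $st(\alpha b'+a)$, as $(\alpha,a)$ varies over $\Q\times A$, is jointly consistent over $\mf{B}'$. The point is that the relevant values are forced to interact by $\Q$-linearity of $st$ on $V$: if $\alpha_1 b+a_1$ and $\alpha_2 b+a_2$ both lie in $V(B)$ and $\lambda_1(\alpha_1 b+a_1)+\lambda_2(\alpha_2 b+a_2)=\alpha_3 b+a_3$, then the chosen standard parts must satisfy the corresponding linear relation. I would argue that all these relations are already implied by the ones visible in $\qftp_{\mc{L}_0}(b/A)$ for a suitably chosen reduced language $\mc{L}_0$ (mirroring the role of $\mc{L}_0$ in the previous subsection), so that once a coherent ``$\Q$-linear data'' of $st$-values compatible with $A$ has been prescribed, the defect type
\[
\{st(\alpha b'+a)=p_{\alpha,a}\}\cup\{V(\alpha b'+a)\leftrightarrow V(\alpha b+a)\}\cup\{\text{cut of }b'\text{ over }A\}
\]
is finitely satisfiable in $\mf{B}'$ and hence realized.

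Assembling the three cases yields $b'\in B'$ with $\qftp(b'/A)=\qftp(b/A)$, which is the standard back-and-forth criterion for $T$ to eliminate quantifiers.
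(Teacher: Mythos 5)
Your proposal is correct and takes essentially the same route as the paper: a standard one-point embedding test with the identical trichotomy on the position of $b$ (in $P$, in $V\setminus P$, or outside $V$), reducing the quantifier-free type to the cut of $b$, its $V$- and $P$-coset membership over $A$, and the standard-part data, and then realizing that data by saturation; you merely make explicit the term normal form and consistency checks that the paper compresses into ``we readily find $b^*$.'' (One small slip: in your Case (a) it is not true that $\alpha b + a \in V$ for all $a \in A$ --- only for $a \in V(A)$, where moreover $st(\alpha b + a) = \alpha b + st(a)$ --- but since $A$ is closed under $st$ this does not affect the reduction to the cut of $b$ over $P(A)$.)
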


\begin{proof}  We let $\mf{B}$ and $\mf{B^*}$ be models of $T$ with $\mf{B}^*$ countably saturated and let $\mf{A}=\langle a_1, \dots, a_n \rangle$ be a finitely generated substructure of both models.  We need to show that if $b \in B$ then there is $b^* \in B^*$ so that $\qftp(b/a_1 \dots a_n)=\qftp(b^*/a_1, \dots, a_n)$.

First suppose that $\mf{B} \models Pb$.  In this case note that the quantifier free type of $b$ is completely determined by the order type of $b$ over $A$.  Thus we readily find $b^*$.

Thus we assume that $\mf{B} \models \neg Pb$ and without loss of generality also assume that $st(b) \in A$.  First let us assume that furthermore $\mf{B} \models Vb$.  In this case we readily see that if $b \notin A$ then $\qftp(b/A)$ is completely determined by the formula $st(x)=st(b)$ and the order type of $b$ over $A$.  Once again we readily find $b^*$.  Finally suppose that $\mf{B} \models \neg Va$.  In this case the quantifier free type of $b$ is determined by its order type over $A$ together with which (if any) of the cosets of the form $V+a$ for $a \in A$ or $P+a$ for $a \in A$ contain $b$.  We can easily find $b^*$.
\end{proof}

\begin{lem}\label{monad}  Let $\mf{C} \models T$.
\begin{enumerate}  
\item The induced structure on $P(C)$ is o-minimal.
\item If $ a \in P(C)$ then the induced structure on $\{x : st(x)=a\}$ is o-minimal.
\item A definable subset of $\neg V(C)$ consists of a finite union of ``cells\rq\rq{} where a cell is either a convex set, a convex set intersect a coset of $P$, or a convex set intersect finitely many complements of cosets of $P$.
\end{enumerate}
\end{lem}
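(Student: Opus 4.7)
The plan is to exploit the quantifier elimination just proved. By QE, every definable subset of $C$ is a Boolean combination of atomic formulas of the form $t_1 < t_2$, $t_1 = t_2$, $P(t)$, and $V(t)$, where the $t$'s are $\mc{L}$-terms with parameters in $C$. The first step is a preliminary term analysis: by induction on term complexity, I would show that on each of $P(C)$, $M_a := \{x : st(x) = a\}$, and $\neg V(C)$, every term $t(x)$ evaluates to a $\Q$-affine expression $\alpha x + d$ (with $\alpha \in \Q$, $d \in C$), possibly after a finite case-split according to which $V$-coset an intermediate subterm lies in. The algebraic facts driving this reduction are that $st$ is the identity on $\neg V(C)$ (by our convention) and on $P(C)$, and that $V$ is a convex additive subgroup of $C$ closed under $\Q$-scalars, containing $P$, and satisfying $V + P = V$.

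For parts (1) and (2), on $P(C)$ or on $M_a$ we always have $\alpha x \in V$, so $V(\alpha x + d)$ depends only on whether $d \in V$, hence is $x$-independent. Similarly $P(\alpha x + d)$ is $x$-independent on $P(C)$ (equivalent to $d \in P$), while on $M_a$ it either fails identically (when $d \notin V$) or forces $x$ to a single point, since once $\alpha x + d \in V$ is known, requiring its infinitesimal part to vanish is a single linear equation in $x$. Linear inequalities reduce to cuts (strict or non-strict) determined by elements of $P(C)$ or of $M_a$ respectively, after standardizing the cut parameter via $st$ (with the strict/non-strict alternation accounting for infinitesimal discrepancies between the parameter and its standard part). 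Hence every definable subset of $P(C)$ is definable in the pure divisible ordered $\Q$-vector space language on $P(C)$, which is o-minimal; and every definable subset of $M_a$ is a finite union of points and intervals in $M_a$, giving o-minimality of the induced structure.

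For part (3), on $\neg V(C)$ atomic formulas define half-lines or points (from $<$ and $=$), $V$-cosets (which are convex, as $V$ is a convex subgroup), and $P$-cosets. The key geometric observation is that a $P$-coset lies inside a single $V$-coset, since $P \subseteq V$; consequently any two positive $P$-constraints either coincide or are inconsistent, and each negative $P$-constraint given a positive one is either automatic or inconsistent. Writing a definable set in DNF and analyzing a single conjunction: the order constraints and positive $V$-coset constraints yield a convex set, and then removing the finitely many $V$-cosets coming from negative $V$-constraints splits this into a finite union of convex subsets of $\neg V(C)$. Each resulting convex piece is then intersected with either (i) no $P$-condition, (ii) a single positive $P$-coset, or (iii) complements of finitely many $P$-cosets, producing a cell of exactly one of the three types listed in (3). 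The whole DNF is thus a finite union of cells.

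The main technical obstacle is the term analysis under nested $st$-applications: on $\neg V(C)$, evaluating $st(s(x))$ for $s(x) = \beta x + e$ with $\beta \neq 0$ and $e \notin V$ splits according to whether $x$ lies in $-e/\beta + V$, and iterated nestings could in principle compound such splits badly. The key point that makes this manageable is that each $st$-application contributes at most one new $V$-coset dichotomy, so the total number of cases stays finite, and on each case the term reduces to a single $\Q$-affine expression, allowing the subsequent Boolean-combination analysis to go through cleanly.
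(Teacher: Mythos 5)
Your overall route is the same as the paper's: the paper's entire proof of this lemma is ``Immediate via quantifier elimination,'' and you are supplying the term-by-term analysis behind that remark. Your treatment of parts (1) and (2) is correct: on $P(C)$ and on $\{x : st(x)=a\}$ every term really does collapse to a $\Q$-affine expression (since $st$ is additive on $V$, is the identity on $P$, and whether $\beta x+e$ lands in $V$ depends there only on the parameter $e$), and your subsequent case analysis of the atomic formulas goes through.

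The gap is in part (3), exactly at the point you flag as ``the main technical obstacle.'' On $\neg V(C)$, when $x$ ranges over a coset $e+V\subseteq \neg V(C)$ and the argument $\beta x+e'$ of an $st$-application lands in $V$, the value $st(\beta x+e')$ is \emph{not} a $\Q$-affine function of $x$: it equals $\beta x+e'$ minus its infinitesimal part, and that infinitesimal part genuinely varies with $x$. So your claim that ``on each case the term reduces to a single $\Q$-affine expression'' fails, and the failure is not cosmetic. Concretely, take $e\notin V$ and a positive infinitesimal $\epsilon$, and consider the atomic formula $x+st(e-x)<e+\epsilon$. On the coset $e+V$ (which is contained in $\neg V(C)$) it defines $\{e+v : v\in V,\ v-st(v)<\epsilon\}$, i.e.\ the union, over all $r\in P(C)$, of the ``lower portion'' $\{e+r+\delta : \delta \textup{ infinitesimal},\ \delta<\epsilon\}$ of the monad of $e+r$. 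A short check shows this is not a finite union of the three cell types: a convex cell meeting two distinct monads in infinitely many points would contain whole intermediate monads (hence points with $\delta\geq\epsilon$); a coset of $P$ meets each monad in at most one point; and a convex set minus finitely many cosets of $P$ omits only finitely many points of each monad it spans. Hence your DNF argument for (3) does not close as written: you would need either to show that such nested-$st$ atomic formulas can be eliminated or absorbed, or to enlarge the notion of cell --- and as it stands this example puts pressure on the literal statement of (3) itself, not merely on your proof of it.
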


\begin{proof} Immediate via quantifier elimination.
\end{proof}
 
\begin{prop} \label{tame_pair} $T$ has dp-rank $2$.
\end{prop}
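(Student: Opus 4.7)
The plan is to prove both dp-rank $\geq 2$ and dp-rank $\leq 2$. Fix a saturated $\C \models T$ with universe $C$.

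\textbf{Lower bound.} I will exhibit an ict-pattern of depth $2$. Choose pairwise disjoint open intervals $(\ell_i, r_i) \subseteq V(C)$ for $i \in \omega$, and elements $c_i \in C$ lying in pairwise distinct cosets of $P(C)$. Let $\varphi_0(x; \ell, r)$ be the formula $\ell < x < r$ and $\varphi_1(x; c)$ be the formula $P(x - c)$. Each row is pairwise inconsistent by construction, and every path $\varphi_0(x; \ell_i, r_i) \wedge \varphi_1(x; c_j)$ is consistent because $P(C)$, and hence every coset of it, is dense in $V(C)$.

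\textbf{Upper bound.} Given $c \in C$ I aim to show $\tp(c)$ has dp-rank $\leq 2$, by case analysis on whether $c \in V(C)$. If $c \in V(C)$, set $\ob{b} = st(c) \in P(C)$. Lemma~\ref{monad}(1) gives that $\tp(\ob{b})$ has dp-rank $\leq 1$, and Lemma~\ref{monad}(2) says the induced structure on the monad $\{x : st(x) = \ob{b}\}$ is o-minimal, so $\tp(c/\ob{b})$ has dp-rank $\leq 1$. Fact~\ref{coord} then yields that $\tp(c)$ has dp-rank $\leq 2$.

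If $c \notin V(C)$, any realization of any ict-pattern for $\tp(c)$ must lie outside $V$, so conjoining each formula with $\neg V(x)$ passes to an ict-pattern in the induced structure on $\neg V(C)$. By Lemma~\ref{monad}(3), definable subsets of $\neg V(C)$ decompose as finite unions of cells of three types: convex, convex intersected with a coset of $P$, and convex intersected with the complement of finitely many cosets of $P$. I then mimic the argument of Lemma~\ref{qdp}: in a hypothetical depth-$3$ inp-pattern, if some row consists of ``convex $\cap$ coset of $P$'' cells, then after fixing the coset, the remaining two rows would produce a depth-$2$ pattern in the o-minimal coset, contradicting dp-minimality of $P(C)$ via Lemma~\ref{monad}(1); otherwise, replacing each cell in every row by the interior of its closure yields an inp-pattern of intervals in the o-minimal reduct $\langle R, +, <, \lambda\rangle_{\lambda \in \Q}$, again impossible. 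Since $T$ is NIP as a reduct of the (NIP) tame pair of real closed fields, burden and dp-rank coincide, so $\tp(c)$ has dp-rank $\leq 2$.

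\textbf{Main obstacle.} The principal difficulty is the case $c \notin V(C)$: one must leverage the cell decomposition for $\neg V(C)$ from Lemma~\ref{monad}(3), adapt the reasoning of Lemma~\ref{qdp} to the present setting, and verify carefully that the ``interior of closure'' replacement preserves row-inconsistency in each of the cell types considered. The NIP hypothesis on $T$, needed to convert the burden bound into a dp-rank bound, follows because $T$ is a reduct of a tame pair.
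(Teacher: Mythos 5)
Your proof follows essentially the same route as the paper's: the upper bound splits on whether $c \in V(C)$, uses Lemma~\ref{monad} together with Fact~\ref{coord} in the bounded case, and mimics the cell analysis of Lemma~\ref{qdp} in the unbounded case, which is exactly what the paper does (the paper gets NIP from \cite{deppair} and the lower bound by citing \cite{S} rather than exhibiting a pattern). One caveat on your explicit lower bound: the justification that ``$P(C)$, and hence every coset of it, is dense in $V(C)$'' is false --- an interval of infinitesimal length inside $V(C)$ need contain no point of $P(C)$, and a coset $P(C)+c$ with $c \notin V(C)$ misses $V(C)$ entirely. The construction is easily repaired by taking the intervals $(\ell_i, r_i)$ to have non-infinitesimal length and the translates $c_j$ to lie in $V(C)$ (for instance, pairwise distinct infinitesimals), after which each path is consistent because $st$ maps an appreciable interval onto a real interval of positive length, which must meet $P(C) + c_j$.
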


\begin{proof}It is immediate from results from \cite{S} that the dp-rank of $T$ is at least $2$.
By results from \cite{deppair} $T$ does not have the independence property, hence we must show that all types $p(x)$ have burden at most $2$.  First of all suppose that $Vx \in p$ and let $a$ realize $p$.  By Lemma \ref{monad} and the dp-minimality of o-minimal structures the type of $st(b)$ has dp-rank $1$.  Again, by Lemma \ref{monad} and the dp-minimality of o-minimal structures $tp(b/st(b))$ has dp-rank $1$.  Hence by Fact \ref{coord} $p(x)$ has dp-rank (or burden) at most $2$.

Now suppose that $\neg Vx \in p$.  From here the proof is almost identical to that of  Lemma \ref{qdp}.  Suppose there were an inp-pattern in $p(x)$ of depth $3$.  Via Lemma \ref{monad} and results from \cite{DGL} we can assume that each row of the pattern consists of cells as described in Lemma \ref{monad}.  If one row consisted of cells containing a coset of $P$ then the remaining two rows of the pattern would induce an inp-pattern of depth $2$ on this coset, but by Lemma \ref{monad} the induced structure on the coset is o-minimal and hence this is impossible.  Hence the rows most consist of convex sets intersected with the complements of finitely many cosets of $P$.  But in order for this to be an inp-pattern the convex sets on each row must be pairwise disjoint.  Hence we would have an inp-pattern of depth $3$ consisting entirely of convex sets, which is impossible.    
\end{proof}

\subsection{Generic Expansions of O-minimal Theories}

In this subsection we show that if $T$ is o-minimal and $T_G$ is the expansion of $T$ with a new generic unary predicate (as constructed in \cite{cp}) then $T_G$ is inp-minimal.  (See \cite{chernikov} for a similar result in the context of $NTP_2$ theories.)  Thus, if $T$ extends the theory of divisible ordered Abelian groups, this gives a good example of an expansion of a divisible ordered Abelian group which is inp-minimal but not dp-minimal (recall that  by a result from \cite{cp} $T_G$ will have the independence property).   In particular  in models of $T_G$ there will be infinite definable dense and codense sets, which is in marked contrast to the dp-minimal case where, as shown in \cite{S}, any infinite definable set most have non-empty interior.  In the specific case where  $T$ is the theory of real closed fields we have an example of an inp-minimal expansion of a real closed field that is not weakly o-minimal.  

Let $T$ be o-minimal in a  language $\mc{L}$.  Let $G$ be a new unary predicate and let $T_G$ be the theory where $G$ is a generic predicate as in \cite{cp} (see also \cite{sergio} for the specific situation where $T$ is o-minimal).   Let $T^*$ be any completion of $T_G$.  We show:

\begin{prop}  
\label{genexpansion}
$T^*$ has burden one.
\end{prop}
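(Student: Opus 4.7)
The plan is to argue by contradiction: suppose $T^*$ has burden at least $2$, so that there is an inp-pattern of depth $2$ in a single free variable $x$, given by formulas $\varphi_0(x; \ob{y}_0), \varphi_1(x; \ob{y}_1)$ and mutually indiscernible parameter arrays $(\ob{b}^0_i)_{i<\omega}$ and $(\ob{b}^1_i)_{i<\omega}$ so that each row is $k_i$-inconsistent while every path is consistent.

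First I would invoke the description of $T_G$-definable sets from \cite{cp}: every $\mc{L} \cup \{G\}$-formula is equivalent modulo $T_G$ to a boolean combination of $\mc{L}$-formulas and atomic formulas $G(t(x, \ob{y}))$ for $\mc{L}$-terms $t$. Using o-minimal cell decomposition in the reduct $T$ and a standard Ramsey thinning of the rows (together with the fact that in one variable each term $t(x, \ob{b}^i_j)$ is piecewise $\mc{L}$-definable and monotonic), I may assume each $\varphi_i(x, \ob{b}^i_j)$ takes the form $\psi_i(x, \ob{b}^i_j) \wedge \chi_i(x, \ob{b}^i_j)$, where $\psi_i$ is an $\mc{L}$-formula (the $\mc{L}$-hull of $\varphi_i$) and $\chi_i$ is a conjunction of conditions $G^{\epsilon}(t(x, \ob{y}))$ for $\mc{L}$-terms $t$.

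Now $T$ is o-minimal, hence dp-minimal and in particular inp-minimal (see \cite{DGL}), so the ``$\mc{L}$-skeleton'' $\{\psi_i(x, \ob{b}^i_j) : i < 2,\, j < \omega\}$ cannot itself be an inp-pattern of depth $2$. Two cases arise: either some $\mc{L}$-path is inconsistent -- which is immediately absurd, since $\varphi_i \models \psi_i$ would make the corresponding full path inconsistent as well -- or else, for one of the rows, say row $0$, the $\mc{L}$-skeleton fails to be $k$-inconsistent for any $k$. In that latter case, for any $N$ I can find $i_1 < \ldots < i_N$ such that $A_N := \bigcap_l \psi_0(\C, \ob{b}^0_{i_l})$ is nonempty, and in fact infinite by o-minimality combined with indiscernibility.

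The contradiction then comes from the genericity axioms for $G$ applied to $A_N$. By the mutual indiscernibility of the rows and the axioms defining $T_G$, the terms $t(x, \ob{b}^0_{i_l})$ appearing in the various $\chi_0(x, \ob{b}^0_{i_l})$ take pairwise distinct values at generic $x \in A_N$, so any consistent specification of their $G$-traces is realizable in $A_N$ by the genericity axioms of \cite{cp}. Taking $N \geq k_0$ and realizing the specification dictated by the $\chi_0(x, \ob{b}^0_{i_l})$'s, I find $x \in A_N$ satisfying $\bigwedge_{l \leq N} \varphi_0(x, \ob{b}^0_{i_l})$, contradicting the $k_0$-inconsistency of row $0$. \textbf{The main obstacle} lies in this last step: one must carefully verify that the terms occurring in the various $\chi_0(x, \ob{b}^0_{i_l})$ really do take distinct generic values as $l$ varies -- i.e., that no hidden algebraic dependency among the $\ob{b}^0_{i_l}$ forces coincidences which would constrain the $G$-trace -- so that the genericity axioms can be applied independently in each coordinate. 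This is where the mutual indiscernibility is used in an essential way, together with the o-minimal geometry controlling $\mathrm{acl}$ in the reduct.
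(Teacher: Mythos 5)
Your overall strategy matches the paper's: reduce via the quantifier elimination of the generic-predicate construction and o-minimal cell decomposition to ``cells'' of the form $x \in I(\ob{y}) \wedge \bigwedge_l G(f_l(x,\ob{y})) \wedge \bigwedge_k \neg G(g_k(x,\ob{y}))$, dispose of the $\mc{L}$-part using inp-minimality of the o-minimal reduct, and then try to contradict the $k_0$-inconsistency of the surviving row by genericity. But the step you yourself flag as ``the main obstacle'' is a genuine gap, and it is precisely where all the work lies. The claim that the terms $t(x,\ob{b}^0_{i_l})$ take pairwise distinct values at generic $x \in A_N$ is exactly what the $k_0$-inconsistency of row $0$ \emph{denies}: since the $\mc{L}$-hulls are simultaneously satisfiable on an interval, and the genericity axioms would otherwise produce a common solution of the full conjunction, for \emph{every} $c$ in that interval and every choice of $k_0$ indices there must be a coincidence $f_l(c,\ob{b}^0_{i_s}) = g_k(c,\ob{b}^0_{i_t})$ between a positively constrained term at one index and a negatively constrained term at another. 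So these coincidences cannot be ruled out; a contradiction has to be extracted from their unavoidability. Your proposed tools for closing the gap also point in the wrong direction: mutual indiscernibility of the two rows is irrelevant here (the problem is internal to a single row), and indiscernibility of the row over $\emptyset$ does not prevent cross-index coincidences --- indeed they must occur.

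The paper closes the gap as follows. The cell normal form guarantees that \emph{within a single instance} the functions $f_1(\cdot,\ob{a}), \ldots, g_m(\cdot,\ob{a})$ take pairwise distinct values at every point of $I(\ob{a})$. Fixing $c$ in the common interval and using dp-minimality of $T$ to pass to a subinterval of indices over which the row is $\mc{L}$-indiscernible \emph{over} $c$, one transports a forced cross-index coincidence $f^1_1(c,\ob{a}_{i_1}) = g^1_1(c,\ob{a}_{i_2})$ along shifts of indices to deduce $f^1_1(c,\ob{a}_{i_2}) = g^1_1(c,\ob{a}_{i_2})$, a within-instance coincidence contradicting the normal form. (One also needs the preliminary observation that the offending row must contain both a positive and a negative $G$-condition, since otherwise genericity alone already makes the row consistent.) Without an argument of this kind, your appeal to ``any consistent specification of $G$-traces is realizable in $A_N$'' does not go through.
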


\begin{proof}  Fix $\mf{C}$ a large model of $T^*$.   As $T$ is o-minimal and by the quantifier elimination for $T_G$ (see \cite{cp}) for any formula $\varphi(x, \ob{a})$ with parameters is equivalent to a disjunction of formulae of the form:

\[x \in I(\ob{a}) \wedge \bigwedge_{l=1}^nG(f_l(x,\ob{a})) \wedge \bigwedge_{k=1}^{m}\neg G(g_k(x, \ob{a}))\]
where $I(\ob{a})$ is either an open interval or a point, the $f$'s and $g$'s are definable functions continuous and monotone on $I(\ob{a})$ and so that for any $c \in I(\ob{a})$ the values $\{f_1(c, \ob{a}), \dots, f_n(c, \ob{a}), 
g_1(c,\ob{a}), \dots g_m(c, \ob{a})\}$ are all distinct.  For convenience let us call such a formula a cell.

Suppose for contradiction that there is an inp-pattern of depth $2$.  Hence we find formulae $\varphi_1(x, \ob{y})$ and 
$\varphi_2(x, \ob{y})$ and mutually indiscernible sequences $\{\ob{a}_i : i \in \R\}$ and $\{\ob{b}_i : i \in \R\}$ witnessing this.  Let $r$ be a natural number so that $\{\varphi_1(x, \ob{a}_i) : i \in \R\}$ and $\{\varphi_2(x, \ob{b}_i ): i \in \R\}$ are $r$-inconsistent.  By results from \cite{DGL} we may assume that each $\varphi_i$ is a cell of the form:
\[x \in I_i(\ob{y}) \wedge \bigwedge_{l=1}^{n_i}G(f_l^i(x,\ob{y})) \wedge \bigwedge_{k=1}^{m_i}\neg G(g^i_k(x, \ob{y}))\] for $i \in \{1,2\}$.

Trivially if either $I_1(\ob{a}_0)$ or $I_2(\ob{b}_0)$ is a single point we arrive at  a contradiction, thus both $I_1(\ob{a}_0)$ and $I_2(\ob{b}_0)$ are open intervals.  Furthermore if both $\{I_1(\ob{a}_i) : i \in \R\}$ and $\{I_2(\ob{b}_i) : i \in \R\}$ were inconsistent we would have an inp-pattern of depth $2$ consisting exclusively of open intervals which is also impossible.  Hence without loss of generality we assume that $\{I_1(\ob{a}_i) : i \in \R\}$ is consistent.  If $\varphi_1(x,\ob{y})$ contains no terms of the form $G(f(x, \ob{y}))$ or no terms of the form $\neg G(g(x, \ob{y}))$ then by the genericity of $G$ we easily obtain that $\{\varphi_1(x, \ob{a}_i) : i \in R\}$ is consistent.  Hence we assume this is not the case.

   Let $I$ be an open interval so that $I \subseteq \bigcap_{i \in \R}I_1(\ob{a}_i)$.  
   It follows that if $c \in I$ and $i_1< \dots < i_r \in \R$ then for some 
 $1 \leq l \leq n_1$ and $1 \leq s \leq r$ as well as some $1 \leq k \leq m_1$ and some $ 1 \leq t \leq r$ we must have that $f^1_l(c, \ob{a}_{i_s})=g^1_k(c, \ob{a}_{i_t})$ since otherwise by the continuity of all the functions this would fail in a neighborhood of $c$ and thus by the genericity of $G$ we would have that $\{\varphi_1(x, \ob{a}_{i_j}) : 1 \leq j \leq r\}$ is consistent.

Fix $c \in I$. As $T$ is dp-minimal we may find on open interval $J \subseteq \R$ so that in the reduct of $\mf{C}$ to $\mathcal{L}$ the sequence $\{\ob{a}_i : i \in J\}$ is indiscernible over $c$.  Pick $i_1 < \dots < i_{r^*} \in J$ where $r^{*} \geq \max\{r,3\}$.
For notational convenience suppose that $f^1_1(c, \ob{a}_{i_1})=g^1_1(c, \ob{a}_{i_2})$.  Hence 
by indiscernibility we have that $g^1_1(c, \ob{a}_{i_2})=g^1_1(c, \ob{a}_{i_3})$ and $f_1^1(c, \ob{a}_{i_2})=g^1_1(c, \ob{a}_{i_3})$.  Thus $f^1_1(c, \ob{a}_{i_2})=g^1_1(c, \ob{a}_{i_2})$ which which violates the assumption that  $f^1_1(x, \ob{a}_{i_2})$ and $g^1_1(x, \ob{a}_{i_2})$ take on distinct values on all of $I_1(\ob{a}_2)$.  We have arrived at a contradiction and hence $T^*$ has burden one.

\end{proof}

\bibliography{modelth-oag}

\Addresses

\end{document}